\numberwithin{equation}{section}
\newcommand*\widefbox[1]{\fbox{\hspace{2em}#1\hspace{2em}}}
\newtheorem{teo}{Theorem}[section]
\newtheorem{lem}[teo]{Lemma}
\newtheorem{prop}[teo]{Proposition}
\newtheorem{cor}[teo]{Corollary}
\newtheorem{rem}[teo]{Remark}
\newcommand\R{\mathbb R}
\newcommand\mbb\mathbb
\newcommand\mbf\mathbf
\newcommand\mcal\mathcal
\newcommand\mfrak\mathfrak
\newcommand\mrm\mathrm
\newcommand\msf\mathsf
\renewcommand\a\alpha
\renewcommand\b\beta
\newcommand\g\gamma
\newcommand\G\Gamma
\renewcommand\d\delta
\newcommand\D\Delta
\newcommand\e\varepsilon
\newcommand\z\zeta
\renewcommand\t\theta
\newcommand\Th\Theta
\newcommand\la\lambda
\newcommand\La\Lambda
\newcommand\s\sigma
\newcommand\si\varsigma
\newcommand\Si\Sigma
\newcommand\ups\upsilon
\newcommand\U\Upsilon
\newcommand\ph\varphi
\renewcommand\o\omega
\renewcommand\O\Omega
\newcommand\wt\widetilde
\newcommand\wh\widehat
\newcommand\ol\overline
\newcommand\ul\underline
\newcommand\mr\mathring
\newcommand\ub\underbrace
\newcommand\pa\partial
\newcommand\n\nabla
\newcommand\fa\forall
\newcommand\ex\exists
\newcommand\es\emptyset
\newcommand\wk\rightharpoonup
\newcommand\inc\hookrightarrow
\newcommand\linf\varliminf
\newcommand\lsup\varlimsup
\newcommand\os\overset
\newcommand\us\underset
\newcommand\sr\stackrel
\newcommand\Ot\Leftarrow
\newcommand\To\Rightarrow
\newcommand\map\mapsto
\newcommand\ot\leftarrow
\newcommand\lot\longleftarrow
\newcommand\lto\longrightarrow
\newcommand\tot\leftrightarrow
\newcommand\ltot\longleftrightarrow
\newcommand\sm\backslash
\renewcommand\Cup\bigcup
\renewcommand\Cap\bigcap
\newcommand\sub\subset
\newcommand\Sub\Subset
\newcommand\sne\subsetneq
\newcommand\bus\supset
\newcommand\Bus\Supset
\newcommand\eq\equiv
\newcommand\ox\otimes
\newcommand\Ox\bigotimes
\newcommand\pl\oplus
\newcommand\Pl\bigoplus
\newcommand\x\times
\renewcommand\c\circ
\newcommand\q\quad
\renewcommand\l\left
\renewcommand\r\right
\newcommand\fr\frac
\def\sideremark#1{\ifvmode\leavevmode\fi\vadjust{\vbox to0pt{\vss
 \hbox to 0pt{\hskip\hsize\hskip1em
 \vbox{\hsize2.1cm\tiny\raggedright\pretolerance10000
  \noindent #1\hfill}\hss}\vbox to15pt{\vfil}\vss}}}%
\begin{document}
\title[Qualitative analysis on the critical points of the Robin function]
{Qualitative analysis on the critical points \\ of the Robin function}
\author[F. Gladiali, M. Grossi, P. Luo and S. Yan]{Francesca Gladiali, Massimo Grossi, Peng Luo, Shusen Yan}

\address[Francesca Gladiali]{Dipartimento di Chimica e Farmacia, Universit\`a di Sassari, via Piandanna 4 - 07100 Sassari, Italy}
 \email{fgladiali@uniss.it}

\address[Massimo Grossi]{Dipartimento di Matematica Guido Castelnuovo, Universit$\grave{a}$ Sapienza, P.le Aldo Moro 5, 00185 Roma, Italy}
 \email{massimo.grossi@uniroma1.it}

 \address[Peng Luo]{School of Mathematics and Statistics, Central China Normal University, Wuhan 430079,  China }
 \email{pluo@ccnu.edu.cn}

\address[Shusen Yan]{School of Mathematics and Statistics, Central China Normal University, Wuhan 430079, China}
\email{syan@ccnu.edu.cn}

\begin{abstract}
Let $\O\subset\R^N$ be a smooth bounded domain with $N\ge2$ and $\O_\e=\O\backslash B(P,\e)$ where $B(P,\e)$ is the ball centered at $P\in\O$ and radius $\e$.
In this paper, we establish the number, location and non-degeneracy of critical points of the Robin function in $\O_\e$ for $\e$ small enough. We will show that the location of $P$ plays a crucial role on the existence and multiplicity of the critical points. The proof of our result is a consequence of delicate estimates on the Green function near to $\partial B(P,\e)$. Some applications to compute the exact number of solutions of related well-studied nonlinear elliptic problems will be showed.
\end{abstract}
\maketitle
\keywords {\noindent \small{{\bf Keywords:} Robin function, Green's function, Critical points, Nondegeneracy}

\smallskip
 \subjclass{\noindent \small{{\bf 2020 Mathematics Subject Classification:}
35A02 $\cdot$ 35J08 $\cdot$ 35J60}}}
\section{Introduction and main results}
\setcounter{equation}{0}

Let $D\subset\R^N$, $N\ge2$ be a smooth domain. For $(x,y)\in D\times D$, $x\ne y$, denote by $G_D(x,y)$ the Green function in $D$. It verifies
\begin{equation*}
\begin{cases}
-\D_x G_D(x,y)=\delta_x(y)&\hbox{in }D,\\[2mm]
G_D(x,y)=0&\hbox{on }\partial D,
\end{cases}
\end{equation*}
in the sense of distribution. We have the classical representation formula
\begin{equation}\label{i0}
G_D(x,y)=S(x,y)-H_D(x,y),
\end{equation}
where $S(x,y)$ is the  {\em fundamental solution} given by
\begin{equation*}
S(x,y)=
\begin{cases}
-\frac{1}{2\pi}\ln \big|x-y\big|&~\mbox{if}~N=2,\\[2mm]
\frac{C_N}{|x-y|^{N-2}} &~\mbox{if}~N\geq 3,
\end{cases}
\end{equation*}
where $C_N:=\frac{1}{N(N-2)\omega_N}$, with $\omega_N$ the volume of the unit ball in $\R^N$. $H_D(x,y)$ is the {\em regular part of the Green function} which is harmonic in both variables $x$ and $y$. The $Robin$ function is defined as
\begin{equation*}
\mathcal{R}_D(x):=H_D(x,x)~\,~\mbox{in}~D.
\end{equation*}
Note that our definition differs, up to a multiplicative constant, from that of some other authors (see for example \cite{bf} and \cite{cf2} where $\mathcal{R}_D(x)=2\pi H_D(x,x)$ for $N=2$). However, since we are interested in the critical points of the Robin function, this difference plays no role in our results.

The $Robin$ function plays a fundamental role in a great number of problems (see \cite{bf,f} and the references therein). It also plays a role in the theory of conformal mappings and is closely related to the inner radius function (see \cite{gu}) and to some
geometric quantities such as the capacity and transfinite diameter of sets (see \cite{bf,f}).
 For elliptic problems involving critical Sobolev exponent \cite{Glangetas,Rey},  the number of solutions is  linked to
 the the number of non-degenerate critical points of the  Robin function.
 Despite the great interest on the Robin function, many questions are still unanswered and we are far from a complete understanding of its properties.

The only bounded domain where the Robin function is explicitly known is the ball centered at a point $Q\in\R^N$; in this case the Robin function is $radial$ and $Q$ is the only critical point (it turns out to be non-degenerate). The computation of the number of critical points as well as other geometric properties (for example the shape of level sets) in general domains of $\R^N$ interested a great number of experts in PDEs, but anyway very few results are known.
One additional difficulty is that it is not known if  the Robin function satisfies some differential equation. This is known only for planar simply-connected domains (\cite{bf}) where it solves the Liouville equation.

To our knowledge, one of the first result  in general domains is \cite{cf2}, in which  Caffarelli and  Friedman
  proved that the Robin function admits only one non-degenerate critical point in convex and bounded domain in $\R^2$. Note that here a crucial role is played by the Liouville equation.
   Later, the existence and uniqueness of critical points of Robin function for a convex and bounded domain in higher dimension was proved in \cite{ct}. However, the non-degeneracy of this critical point is still open. Also some results on the non-degeneracy of the critical points of the Robin function for some symmetric domain can be found in \cite{g2}.
For non-convex domains, for example domains with ``rich'' topology, we cannot expect the uniqueness of the critical point of the Robin function. But how the topology of the domain impacts the number of  critical points of Robin function is still unclear and seems to be a very difficult issue.

In this paper we study what happens when we remove a small hole to a domain $\O\subset\R^N$.  More precisely, denoting by $B(x_0,r)$ the ball centered at $x_0$ of radius $r$, set
\begin{equation}\label{def}
\Omega_\e=\Omega\backslash B(P,\e)\,~\mbox{with}\,~P\in\Omega.
\end{equation}

\begin{figure}[hbt]
\begin{minipage}[t]{0.25\textwidth}
\centering
\includegraphics[width=\textwidth]{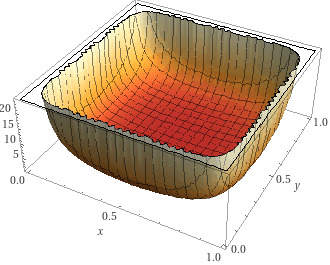}
{\em Robin function in $\O$}
\end{minipage}
\qquad\qquad\qquad
\begin{minipage}[t]{0.25\textwidth}
\centering
\includegraphics[width=\textwidth]{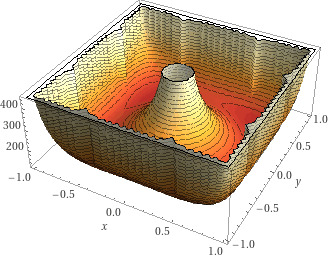}
{\em Robin function in $\O_\e$}
\end{minipage}
\end{figure}

Note that the Robin function $\mathcal{R}_{\O_\e}$ blows up at $\partial B(P,\e)$. So, for $\e$ small enough, $\mathcal{R}_\O$ and $\mathcal{R}_{\O_\e}$ look like very differently near $P$.

Our aim is to study the number of critical points of the Robin function $\mathcal{R}_{\O_\e}$ as well as their non-degeneracy.
Observe that the regular part $H_{\O_\e}(x,y)$ satisfies
$$
\begin{cases}
\D H_{\O_\e}(x,y)=0&\mbox{in}\ \O_\e,\\
H_{\O_\e}(x,y)=S(x,y)&\mbox{on}\ \partial\O_\e.
\end{cases}
$$
Hence, by the standard regularity theory, we have that
\begin{equation}\label{eqH}
H_{\O_\e}(x,y)\to  H_{\O}(x,y)
\end{equation}
in any compact set $K\subset\overline\O\setminus B(P,r)$ as $\e\to0$ for some small {\em fixed} $r>0$. Setting $x=y$ in \eqref{eqH} we get that $\mathcal{R}_{\O_\e}(x)\to\mathcal{R}_\O(x)$ for any $x\in K$, which is a good information on the behavior of $\mathcal{R}_{\O_\e}$ {\em far away} from $P$. The behavior of $\mathcal{R}_{\O_\e}$ close to $\partial B_\e$ is much more complicated and is the most delicate problem to be addressed in this paper. Here a careful use of the maximum principle for harmonic functions will be crucial. Actually, sharp estimates up to $\partial B(P,\e)$ will allow us to prove the existence of critical points for $\mathcal{R}_{\O_\e}$ which converge to $P$ as $\e\to0$.

\vskip 0.2cm

Our first result emphasizes the role of the center of the hole $B(P,\e)$. Indeed the scenario is very different depending on whether $P$ is a critical point of $\mathcal{R}_\O$ or not.

In all the paper we denote, for $x\in\O_\e$, by $O\big(f(\e,x)\big)$ a quantity such that
$$O\big(f(\e,x)\big)\le C|f(\e,x)|,$$
 where $C$ is a constant independent of $\e$ and $x$.

\begin{teo}\label{I6}
Suppose $\Omega$ is a bounded smooth domain in $\R^N$, $N\ge2$ with $P\in\Omega$.
\begin{floatingfigure}[r]{.30\textwidth}
\includegraphics[width=.35\textwidth]{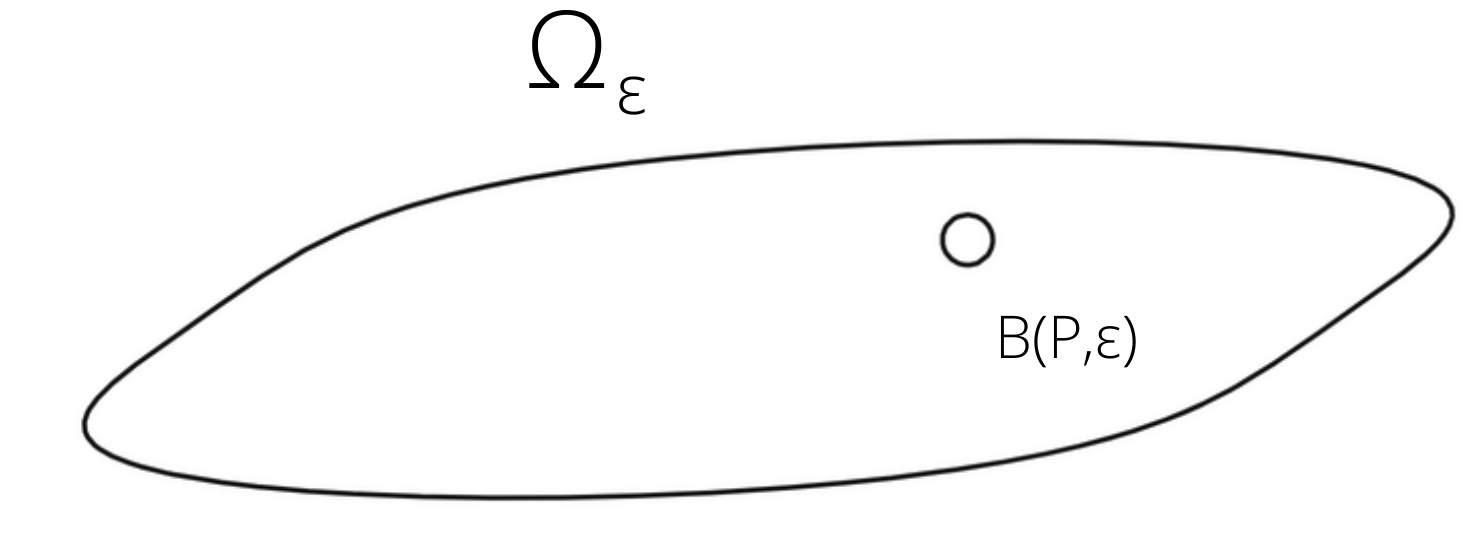}
\end{floatingfigure}
If
\[\nabla \mathcal{R}_\O(P)\neq 0,\]
\noindent then for $\varepsilon$ small enough,
\begin{equation*}\label{equa}
\!\!\!\!\!\!\!\!\!\!\!\!\sharp\Big\{\mbox{critical points of  $\mathcal{R}_{\O_\e}$ in $B(P,r)\setminus B(P,\e)$}\Big\}=1,
\end{equation*}
\noindent
where $B(P,r)\subset\O$ is chosen not containing any critical  point of $\mathcal{R}_\O$.
Moreover the critical point $x_\e\in B(P,r)$ of $\mathcal{R}_{\O_\e}$\hbox{ verifies, for $\e$ small enough,}
\begin{itemize}
\item[$(P_1)$]
\vskip-0.1cm
\begin{equation}\label{11-1-01}
x_\e=P+
\begin{cases}
 \e^\frac{N-2}{2N-3}\left(
\Big(\frac{2 }{ N\omega_N |\nabla \mathcal{R}_\O(P)|^{2N-2}}\Big)^{\frac{1}{2N-3}} \nabla \mathcal{R}_\O(P) +o(1)\right)&~\mbox{for}~N\geq 3,\\[4mm]
 r_\e \left( \frac{\nabla \mathcal{R}_\O(P)}{ \pi^2 |\nabla \mathcal{R}_\O(P)|^{2}}   +o(1)\right) &~\mbox{for}~N=2,
\end{cases}
\end{equation}
where $r_\e$ is the unique solution of  the equation
 \begin{equation}\label{re}
r-\frac{\ln r}{\ln \e}=0~~\mbox{in}~(0,\infty).
\end{equation}
\vskip 0.1cm
\item[$(P_2)$]
$x_\e$ is a non-degenerate critical point  with
$\mbox{index}_{x_\e}\big(\nabla \mathcal{R}_{\O_\e}\big)=(-1)^{N+1}$.
\vskip 0.2cm
\item[$(P_3)$] $\mathcal{R}_{\O_\e}(x_\e)\to\mathcal{R}_{\O}(P)$.
\end{itemize}
\end{teo}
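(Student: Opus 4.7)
The plan is to view $\mathcal{R}_{\Omega_\e}$ as a perturbation of $\mathcal{R}_\Omega$ driven by the hole. I would start by writing $H_{\Omega_\e}(x,y)=H_\Omega(x,y)+\phi_\e(x,y)$, where $\phi_\e$ is harmonic in each variable on $\Omega_\e$, vanishes on $\partial\Omega$, and equals $G_\Omega(x,y)$ on $\partial B(P,\e)$. The heart of the argument is a sharp asymptotic expansion of $\phi_\e$, uniform up to $\partial B(P,\e)$. Introducing the capacity potential $U_\e$ of the hole (harmonic on $\Omega_\e$, equal to $1$ on $\partial B(P,\e)$ and $0$ on $\partial\Omega$), the natural symmetric leading ansatz is
\[
\phi_\e(x,y) \approx G_\Omega(P,y)\,U_\e(x) + G_\Omega(P,x)\,U_\e(y) + \beta_\e\,U_\e(x)\,U_\e(y),
\]
with $\beta_\e$ chosen so that the boundary condition on $\partial B(P,\e)$ is matched to leading order. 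One has $U_\e(x)\simeq \e^{N-2}/|x-P|^{N-2}$ for $N\ge 3$ and $U_\e(x)\simeq \ln|x-P|/\ln\e$ for $N=2$, in the intermediate region $\e\ll|x-P|\ll 1$. The residual error between $\phi_\e$ and this ansatz is controlled via the maximum principle, using barriers built from $U_\e$ and its powers. Making this control uniform all the way up to $\partial B(P,\e)$, where $\phi_\e$ becomes large on the diagonal, is the main technical obstacle.

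With the expansion in hand, the diagonal gives $\mathcal{R}_{\Omega_\e}(x)-\mathcal{R}_\Omega(x)=\phi_\e(x,x)$, a singular perturbation whose radial derivative balances $\nabla\mathcal{R}_\Omega(P)$ at a single scale: $\rho_\e=\e^{(N-2)/(2N-3)}$ for $N\ge 3$, and the logarithmic scale $\rho_\e=r_\e$ for $N=2$. Rescaling $x=P+\rho_\e u$ and using the expansions of $G_\Omega(P,\cdot)$ and $U_\e$, I obtain for $N\ge 3$ the $C^2_{\mathrm{loc}}(\R^N\setminus\{0\})$-convergence
\[
\nabla_x\mathcal{R}_{\Omega_\e}(P+\rho_\e u) \longrightarrow \nabla\mathcal{R}_\Omega(P) - \frac{2(N-2)\,C_N\,u}{|u|^{2N-2}},
\]
and an analogous logarithmic limit for $N=2$. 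The reduced critical equation $\nabla\mathcal{R}_\Omega(P)=2(N-2)C_N\,u/|u|^{2N-2}$ has a unique nontrivial solution $u_*=s_*\,\nabla\mathcal{R}_\Omega(P)$ with $s_*^{2N-3}=2(N-2)C_N/|\nabla\mathcal{R}_\Omega(P)|^{2N-2}$; using $2(N-2)C_N=2/(N\omega_N)$, this reproduces exactly the coefficient in $(P_1)$.

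Finally, the Hessian of the limiting profile at $u_*$ has one positive eigenvalue (along $u_*/|u_*|$) and $N-1$ negative eigenvalues (tangential), so $u_*$ is a nondegenerate critical point of Brouwer index $(-1)^{N-1}=(-1)^{N+1}$. The implicit function theorem applied to the rescaled equation near $u_*$ then yields, for all $\e$ small enough, a unique critical point $x_\e=P+\rho_\e u_*+o(\rho_\e)$ of $\mathcal{R}_{\Omega_\e}$, nondegenerate and with the same index, proving $(P_1)$ and $(P_2)$. To upgrade local uniqueness to uniqueness in the whole annulus $B(P,r)\setminus B(P,\e)$, I would exclude the other scales: for $|x-P|\gg\rho_\e$, $\nabla\phi_\e(x,x)$ vanishes uniformly, so $\nabla\mathcal{R}_{\Omega_\e}\approx\nabla\mathcal{R}_\Omega\neq 0$ by the choice of $r$; for $\e<|x-P|\ll\rho_\e$, the radial part of $\nabla\phi_\e(x,x)$ is strictly larger in magnitude than $|\nabla\mathcal{R}_\Omega(P)|$, hence $\nabla\mathcal{R}_{\Omega_\e}$ cannot vanish there either. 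Then $(P_3)$ follows by plugging $x=x_\e$ into the expansion: $\mathcal{R}_{\Omega_\e}(x_\e)-\mathcal{R}_\Omega(P)=O(\rho_\e)\to 0$ as $\e\to 0$.
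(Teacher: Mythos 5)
Your proposal is correct in outline and reaches the same limit profile, blow-up scale, critical point, and index as the paper, but the organization of the argument is genuinely different. The paper's starting point is the boundary-integral representation for $\nabla \mathcal{R}_{\O_\e}$ (formula \eqref{10-09-21}); after inserting the identities $G_{\O_\e}=G_\O+H_\O-H_{\O_\e}$ and $G_{\O_\e}=G_{B^c_\e}+H_{B^c_\e}-H_{\O_\e}$, the gradient splits as $\nabla\mathcal{R}_\O+\nabla\mathcal{R}_{B^c_\e}+\text{(error)}$, and the error reduces to estimating $\partial_{\nu_y}\bigl(H_\O-H_{\O_\e}\bigr)$ on $\partial\O$ and $\partial_{\nu_y}\bigl(H_{B^c_\e}-H_{\O_\e}\bigr)$ on $\partial B_\e$ (Lemmas~\ref{llt} and~\ref{imp}, proved by the maximum principle with explicit comparison functions $\e^{N-2}/|x|^{N-2}$, $\ln|x|/\ln\e$). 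You instead work with $\phi_\e=H_{\O_\e}-H_\O$ directly, impose the symmetric capacity-potential ansatz $G_\O(P,y)U_\e(x)+G_\O(P,x)U_\e(y)+\beta_\e U_\e(x)U_\e(y)$ with $\beta_\e\approx -C_N\e^{2-N}+\mathcal{R}_\O(P)$, and then differentiate on the diagonal. Your coefficient bookkeeping is right: the ansatz reproduces $\mathcal{R}_{B^c_\e}$ on the diagonal at scale $\e\ll|x-P|\ll 1$, $2(N-2)C_N=2/(N\omega_N)=D_N$ recovers the limit profile $F$, the solution $u_*=s_*\nabla\mathcal{R}_\O(P)$ with $s_*^{2N-3}=D_N/|\nabla\mathcal{R}_\O(P)|^{2N-2}$ matches \eqref{y0}, and the Hessian signature (one positive, $N-1$ negative) gives index $(-1)^{N-1}=(-1)^{N+1}$. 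What the paper's route buys is that the first-order information comes directly from a boundary integral, so the interior gradient estimate for harmonic functions (Lemma~\ref{lem2-3}, costing a factor $1/\mathrm{dist}(x,\partial\O_\e)$) is invoked only once, to get the Hessian; your route differentiates $\phi_\e$ through the error term twice on the diagonal, which is noisier near $\partial B_\e$ and makes the uniform $C^2$ control harder to close.

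That last point is the one real gap: you flag ``making this control uniform all the way up to $\partial B(P,\e)$'' as the main technical obstacle and propose barriers built from $U_\e$, but you do not carry this out. This is precisely the content of Lemmas~\ref{llt} and~\ref{imp} and Propositions~\ref{prnab}, \ref{prop-p2}, \ref{Re1}, and it is where the paper does most of its work. In particular, the two-derivative bound on $\phi_\e$ at the critical point (needed for non-degeneracy and the index count) is not automatic from a pointwise bound on the error; it requires an argument like Corollary~\ref{corC2}, which works only because the critical point sits at scale $|x-P|\sim\e^{(N-2)/(2N-3)}\gg\e$, so $\mathrm{dist}(x,\partial B_\e)\sim|x-P|$ and the loss from the interior estimate is controllable. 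Also note that ``$\nabla\phi_\e(x,x)$ vanishes uniformly for $|x-P|\gg\rho_\e$'' is only true in the limit $|x-P|/\rho_\e\to\infty$; for $|x-P|\asymp\rho_\e$ it is of size $|\nabla\mathcal{R}_\O(P)|$, so the exclusion argument needs the full rescaled degree/IFT statement on an annulus $\{c\le|y|\le C\}$ (as in Proposition~\ref{cor} and Remark~\ref{Rc7}), not just a dichotomy between ``far'' and ``near.'' These are fixable, but they are exactly the places where the proposal must be made rigorous.
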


\begin{rem}\label{Ir}
The condition $\nabla \mathcal{R}_\O(P)\neq 0$ cannot be removed. Indeed, if $\O=B(0,R)$ we know that $0$ is the unique critical point of $\mathcal{R}_\O$ and  in the annulus $\O_\e=B(0,R)\backslash B(0,\e)$ we have that $\mathcal{R}_{\O_\e}$ is radial with respect to the origin. Since $\mathcal{R}_{\O_\e}\Big|_{\partial \O_\e}=+\infty$ we have that the set of minima of $\mathcal{R}_{\O_\e}$ is a sphere and then $\mathcal{R}_{\O_\e}$ admits infinitely many minima.
\end{rem}
\begin{rem}
Although $\mathcal{R}_{\O_\e}\Big|_{\partial  B_\e}=+\infty$, we have by $(P_3)$ that $\mathcal{R}_{\O_\e}(x_\e)\le C$. Roughly speaking this means that $x_\e$ is not ``so close'' to $\partial B(P,\e)$ (actually by $(P_1)$ we have that $\frac{|x_\e-P|}{\e}\to+ \infty$).

\end{rem}
\begin{rem}\label{R1}
Let us give the idea of the proof of the theorem. Our starting point is the following basic representation formula for the gradient of the Robin function (see \cite{sc}, and \cite{gsc} for $N=2,3$ and \cite{bp} for any $N\ge2$),
\begin{equation}\label{I7}
\begin{split}
 \nabla\mathcal{R}_{\O_\e}(x)=
&\int_{\partial\Omega_\e}\nu_\e(y)\left(\frac{\partial G_{\O_\e}(x,y)}{\partial\nu_y}\right)^2d\sigma_y\\=
&\int_{\partial\Omega}\nu(y)\left(\frac{\partial G_{\O_\e}(x,y)}{\partial\nu_y}\right)^2d\sigma_y-\int_{\partial B(P,\e)}\frac{y-P}\e\left(\frac{\partial G_{\O_\e}(x,y)}{\partial\nu_y}\right)^2d\sigma_y,
\end{split}
\end{equation}
where $\nu_\e(y)$ and $\nu(y)$ are the outer unit normal to $\partial\O_\e$ and $\partial\O$ respectively. By \eqref{I7} we will derive some $C^1$-estimates which are crucial to prove our results and, in our opinion, have an independent interest. Let us start to discuss the case $N\ge3$,  where we get, \underline{uniformly} for $x\in\O_\e$,
\begin{tcolorbox}[colback=white,colframe=black]
\begin{equation}\label{I8}
\nabla\mathcal{R}_{\O_\e}(x)=\nabla\mathcal{R}_\O(x) +\!\!\!\!\!
\underbrace{\nabla\mathcal{R}_{\R^N\setminus B(P,\e)}(x)}_{=-\frac{2\e^{N-2}}{N\omega_N}\frac{x-P}{\left(|x-P|^2-\e^2\right)^{N-1}}}\!\!\!\!\!\!+O\left(\frac{\e^{N-2}}{|x-P|^{N-1}}\right)+O(\e).
 \end{equation}
\end{tcolorbox}
The previous estimate is a {\bf second order} expansion of the Robin function in $\O_\e$.\\
 It turns out that $\nabla\mathcal{R}_\O(x)$ and $\nabla\mathcal{R}_{\R^N\setminus B(P,\e)}(x)$ are the leading terms of the expansion of $\nabla \mathcal{R}_{\O_\e}(x)$.

Note that from \eqref{I8} we get that $\nabla\mathcal{R}_{\O_\e}(x)\to\nabla\mathcal{R}_\O(x)$ uniformly on the compact sets of $\O$ not containing $P$. So, under some non-degeneracy assumptions on the critical points of $\mathcal{R}_\O(x)$, we get that the number of critical points of $\mathcal{R}_{\O_\e}(x)$ far away from $P$ is the same as $\mathcal{R}_\O(x)$.
Next let us study what happens when $x\to P$. Here the analysis is very delicate but
formally, using the uniform convergence in \eqref{I8}, we get
$$\nabla\mathcal{R}_{\O_\e}\left(P+\e^\frac{N-2}{2N-3}y\right)\sim\nabla\mathcal{R}_\O(P)-\frac2{N\omega_N}\frac y{|y|^{2N-2}}=\nabla\left(
\sum_{j=1}^N\frac{\partial\mathcal{R}_\O(P)}{\partial x_j}y_j-\frac2{N\omega_N(4-2N)}\frac1{|y|^{2N-4}}\right).$$
Since the function $F(y)=\displaystyle\sum_{j=1}^N\frac{\partial\mathcal{R}_\O(P)}{\partial x_j}y_j-\frac2{N\omega_N(4-2N)}\frac1{|y|^{2N-4}}$ admits a unique non-degenerate critical point with index  $(-1)^{N+1}$ we get that the same holds for $\mathcal{R}_{\O_\e}$, which proves $(P_1)$ and $(P_2)$ for $N\ge3$.
\vskip 0.1cm

The case  $N=2$ is a little bit more complicated because $\mathcal{R}_{\R^2\setminus B(P,\e)}(x)$ does not goes to $0$ as $\e\to0$. Actually, an additional term appears in the expansion,
\begin{tcolorbox}[colback=white,colframe=black]
\begin{equation}\label{I9}
\!\!\!\!\nabla\mathcal{R}_{\O_\e}(x)=\nabla\mathcal{R}_\O(x) +
\underbrace{\nabla\mathcal{R}_{\R^2\setminus B(P,\e)}(x)}_{=-\frac1\pi\frac{x-P}{|x-P|^2-\e^2}}+\frac 1{\pi}\left(1-\frac {\ln |x-P|}{\ln \e}\right)\frac{x-P}{|x-P|^2}+O\left(\frac1{|x-P| |\ln\e|}\right).
\end{equation}
\end{tcolorbox}
However, up to some technicalities, the proof follows the same line as in the case $N\ge3$.\\
Finally let us point out that the maximum principle plays a crucial role to get uniform estimates up to $\partial\O$ in \eqref{I8} and \eqref{I9}.
\end{rem}
\begin{rem}
An interesting asymptotic formula for the Robin function in $\O_\e$ is the following (see \cite{bf}, p.198-199): for every $x\ne P$ and $N\ge3$,
\begin{equation}\label{Irem}
\mathcal{R}_{\O_\e}(x)=\mathcal{R}_\O(x)+\frac{G_\O^2(x,P)}{N(N-2)
\omega_N\left(\e^{2-N}-\mathcal{R}_\O(P)\right)}+O\left(\e^{N-1}\right),
\end{equation}
(an analogous formula holds for $N=2$). \eqref{Irem} is a consequence of the Schiffer-Spencer formula (see \cite{ss}  for $N=2$ and \cite{O} for $N\ge3$) but the remainder term $O\left(\e^{N-1}\right)$ is not uniform with respect to $x$ (as stated at page 771 in  \cite{O}).

In Proposition \ref{Re1} we prove the following,
\begin{tcolorbox}[colback=white,colframe=black]
\begin{equation*}
\begin{split}
&\mathcal{R}_{\O_\e}(x)=\mathcal{R}_{\O}(x)+ \mathcal{R}_{B^c_\e}(x)+\begin{cases} O\Big(\frac{\e^{N-2}}{|x-P|^{N-2}}\Big)+O(\e)\,\,\,~&\mbox{for}~N\geq 3,
\\[3mm]
\frac 1{2\pi}\ln\frac\e{|x-P|^2}+O\left(\frac1{|\ln\e|}\right)~&\mbox{for}~N=2,
\end{cases}
\end{split}
\end{equation*}
\end{tcolorbox}\noindent
where the remainder terms are uniform with respect to $x\in\O_{\e}$. This can be seen as an extension of  \eqref{Irem}.
\end{rem}
Theorem \ref{I6} states the $uniqueness$ and $non-degeneracy$ of the critical points of $\mathcal{R}_{\O_\e}$ near the hole $B(P,\e)$. Under a non-degeneracy condition on the critical points of $\mathcal{R}_{\O}$ we can compute the exact number of the critical points of $\mathcal{R}_{\O_\e}$ in $\O_\e$.
\begin{cor}\label{th1-2dd}
Suppose that $\Omega$ and $\Omega_\e$ are the domains as in Theorem \ref{I6}.
If $\nabla \mathcal{R}_\O(P)\neq 0$ and all critical points of $\mathcal{R}_\O$ in $\Omega$  are  non-degenerate,
then for $\varepsilon$ small enough, all critical points of $\mathcal{R}_{\O_\e}$ are non-degenerate and
\begin{equation*}
\sharp\Big\{\mbox{critical points of $\mathcal{R}_{\O_\e}$ in $\Omega_\e$}\Big\}=\sharp\big\{\hbox{critical points of $\mathcal{R}_\O$ in $\O$}\big\}+1.
\end{equation*}
\end{cor}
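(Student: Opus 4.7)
The strategy is to split $\Omega_\varepsilon$ into an \emph{outer} part $\Omega\setminus B(P,r)$, where $\mathcal{R}_{\Omega_\varepsilon}$ is a $C^1$-small perturbation of $\mathcal{R}_\Omega$, and an \emph{inner} annulus $B(P,r)\setminus B(P,\varepsilon)$, which has already been analysed in Theorem \ref{I6}. Fix once and for all $r>0$ small enough that $\overline{B(P,r)}\subset\Omega$ contains no critical point of $\mathcal{R}_\Omega$, and enumerate the critical points of $\mathcal{R}_\Omega$ in $\Omega$ as $P_1,\ldots,P_k$, which by hypothesis are all non-degenerate and hence isolated.

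\textbf{Outer region.} From the expansions \eqref{I8} ($N\ge 3$) and \eqref{I9} ($N=2$), the error terms vanish uniformly on compact subsets of $\overline{\Omega}\setminus B(P,r)$ as $\varepsilon\to 0$, so $\nabla\mathcal{R}_{\Omega_\varepsilon}\to \nabla\mathcal{R}_\Omega$ uniformly there. Moreover $H_{\Omega_\varepsilon}-H_\Omega$ is harmonic in $\Omega\setminus\overline{B(P,r/2)}$ with boundary values tending to $0$ with a quantitative rate, so standard interior estimates for harmonic functions upgrade this to $C^2_{\mathrm{loc}}$-convergence of $\mathcal{R}_{\Omega_\varepsilon}$ to $\mathcal{R}_\Omega$. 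Since $D^2\mathcal{R}_\Omega(P_i)$ is invertible, the implicit function theorem applied to the $C^1$-map $\nabla\mathcal{R}_{\Omega_\varepsilon}$ produces, for some $\rho>0$ and all sufficiently small $\varepsilon$, exactly one critical point $P_{i,\varepsilon}\in B(P_i,\rho)$ of $\mathcal{R}_{\Omega_\varepsilon}$, and the $C^2$-convergence makes it non-degenerate.

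\textbf{Inner region and exhaustion.} Theorem \ref{I6} supplies one further non-degenerate critical point $x_\varepsilon\in B(P,r)\setminus B(P,\varepsilon)$. It remains to show that no other critical point exists. If $y_\varepsilon$ were such, passing to a subsequence $y_\varepsilon\to y_0\in\overline{\Omega}$. If $y_0\in\Omega\setminus\{P\}$, the $C^1$-convergence forces $\nabla\mathcal{R}_\Omega(y_0)=0$, so $y_0=P_i$ and, for $\varepsilon$ small, $y_\varepsilon\in B(P_i,\rho)$, whence $y_\varepsilon=P_{i,\varepsilon}$, contradiction. If $y_0=P$, then $y_\varepsilon\in B(P,r)\setminus B(P,\varepsilon)$ eventually and the uniqueness part of Theorem \ref{I6} gives $y_\varepsilon=x_\varepsilon$, again contradicting the assumption. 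The case $y_0\in\partial\Omega$ is excluded using the boundary blow-up of the Robin function: the comparison of $H_{\Omega_\varepsilon}$ with the regular part of the half-space Green function yields a lower bound on $|\nabla\mathcal{R}_{\Omega_\varepsilon}|$ in a fixed tubular neighborhood of $\partial\Omega$, uniform in $\varepsilon$, so no critical points can lie there. Combining the three steps gives exactly $k+1$ critical points, all non-degenerate.

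\textbf{Main obstacle.} The delicate point is the last exclusion: one must verify that removing the shrinking hole $B(P,\varepsilon)$ does not create spurious critical points of $\mathcal{R}_{\Omega_\varepsilon}$ in a neighborhood of $\partial\Omega$, uniformly as $\varepsilon\to 0$. The uniform convergence from \eqref{I8}--\eqref{I9} controls the interior well, but near $\partial\Omega$ the Robin function itself is unbounded, so the argument has to rely on the quantitative boundary asymptotics of $H_{\Omega_\varepsilon}$, which are stable under the small perturbation induced by the hole because $P$ is an interior point of $\Omega$.
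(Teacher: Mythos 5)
Your decomposition of $\Omega_\varepsilon$ into neighborhoods of the critical points of $\mathcal{R}_\Omega$, an intermediate region, and the annulus near $P$, together with the implicit-function-theorem count near each $P_i$ and the invocation of Theorem \ref{I6} near $P$, is exactly the paper's argument for this corollary. The one place you diverge — and which you flag as the main obstacle — is the exclusion of critical points near $\partial\Omega$, where you propose a new comparison of $H_{\Omega_\varepsilon}$ with the regular part of the half-space Green function. This extra lemma is not needed and the paper does not use it: the estimate \eqref{super} of Proposition \ref{prnab} is stated (and proved via the maximum principle) to hold uniformly for \emph{all} $x\in\Omega_\varepsilon$, and on the set $\{|x-P|\ge r\}$ the term $\nabla\mathcal{R}_{B^c_\varepsilon}$ and the error terms are both $o(1)$ uniformly, so $\nabla\mathcal{R}_{\Omega_\varepsilon}-\nabla\mathcal{R}_\Omega\to 0$ uniformly up to $\partial\Omega$. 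Since $|\nabla\mathcal{R}_\Omega|$ is bounded away from zero on $\Omega\setminus\big(B(P,r)\cup\bigcup_i B(P_i,\rho)\big)$ — indeed it blows up at $\partial\Omega$ — this single uniform $C^1$-estimate already rules out critical points near the boundary. So your proof is correct, but the separate boundary comparison is an unnecessary detour; the uniform estimate you already cited handles the boundary automatically.
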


As previously mentioned, the Robin function of the ball $B(0,R)$ has a unique non-degenerate critical point. So Corollary \ref{th1-2dd} applies and then if $P\ne0$ the Robin function  $\mathcal{R}_{\O_\e}$ has $two$ non-degenerate critical points.
On the other hand, if $P=0$, we are in the situation described in Remark \ref{Ir}.
Hence a nice consequence of the previous corollary is the following one.
\begin{cor}
Assume that $\O=B(0,R)\subset\R^N$, $N\ge2$ and $\O_\e=B(0,R)\backslash B(P,\e)$. Then, for $\e$ small enough,
\begin{equation*}
\sharp\Big\{\mbox{critical points of $\mathcal{R}_{\O_\e}$ in $\Omega_\e$}\Big\}=
\begin{cases}
2&\hbox{if }P\ne0,\\
\infty&\hbox{if }P=0,
\end{cases}
\end{equation*}
and if $P\ne0$ the two critical points are non-degenerate.
\begin{figure}[hbt]
\begin{minipage}[t]{0.25\linewidth}
\centering
\includegraphics[width=\textwidth]{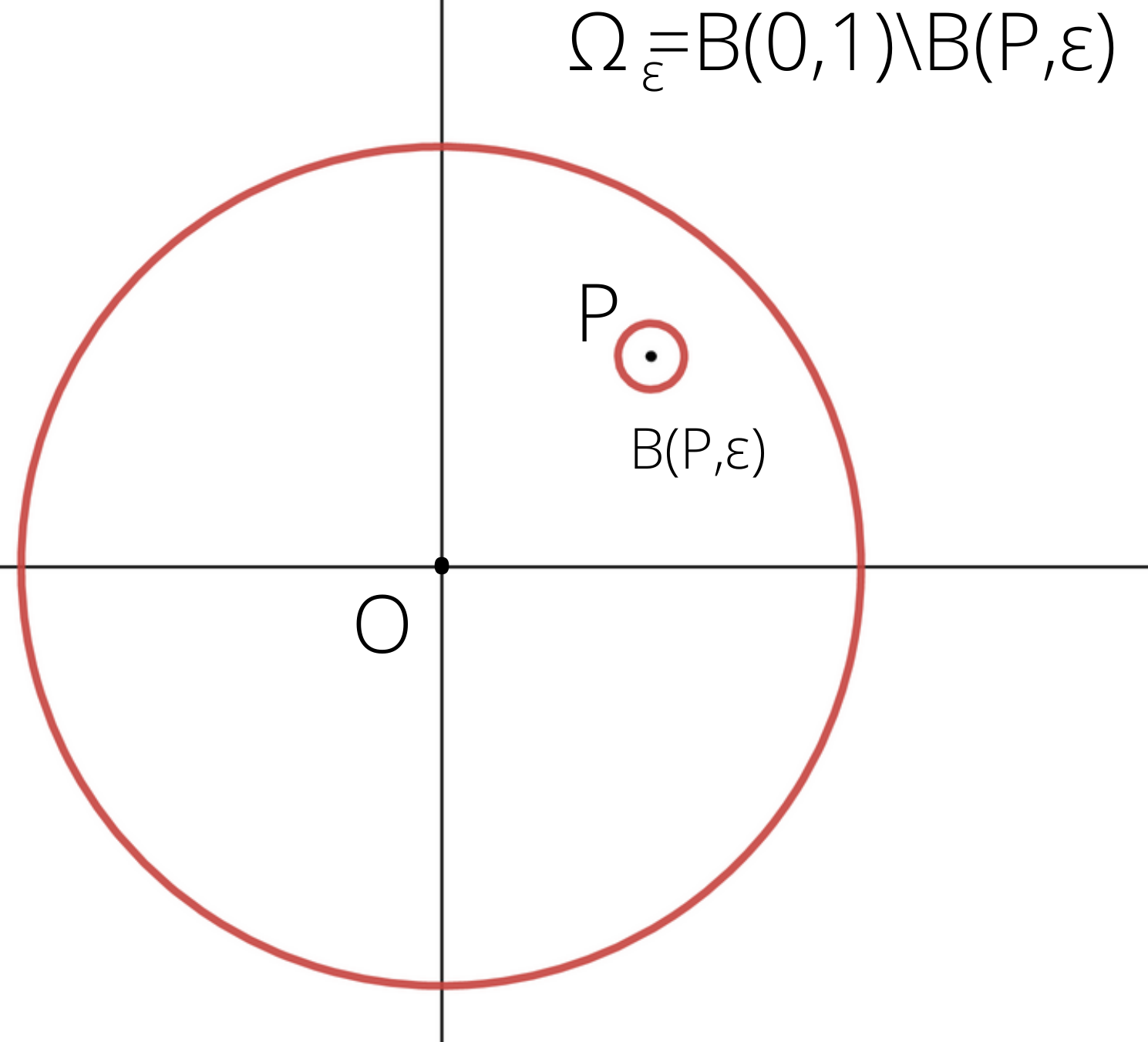}
{\scriptsize $P\ne0$ (two critical points)}
\end{minipage}
\qquad\qquad\qquad
\begin{minipage}[t]{0.25\linewidth}
\centering
\includegraphics[width=\textwidth]{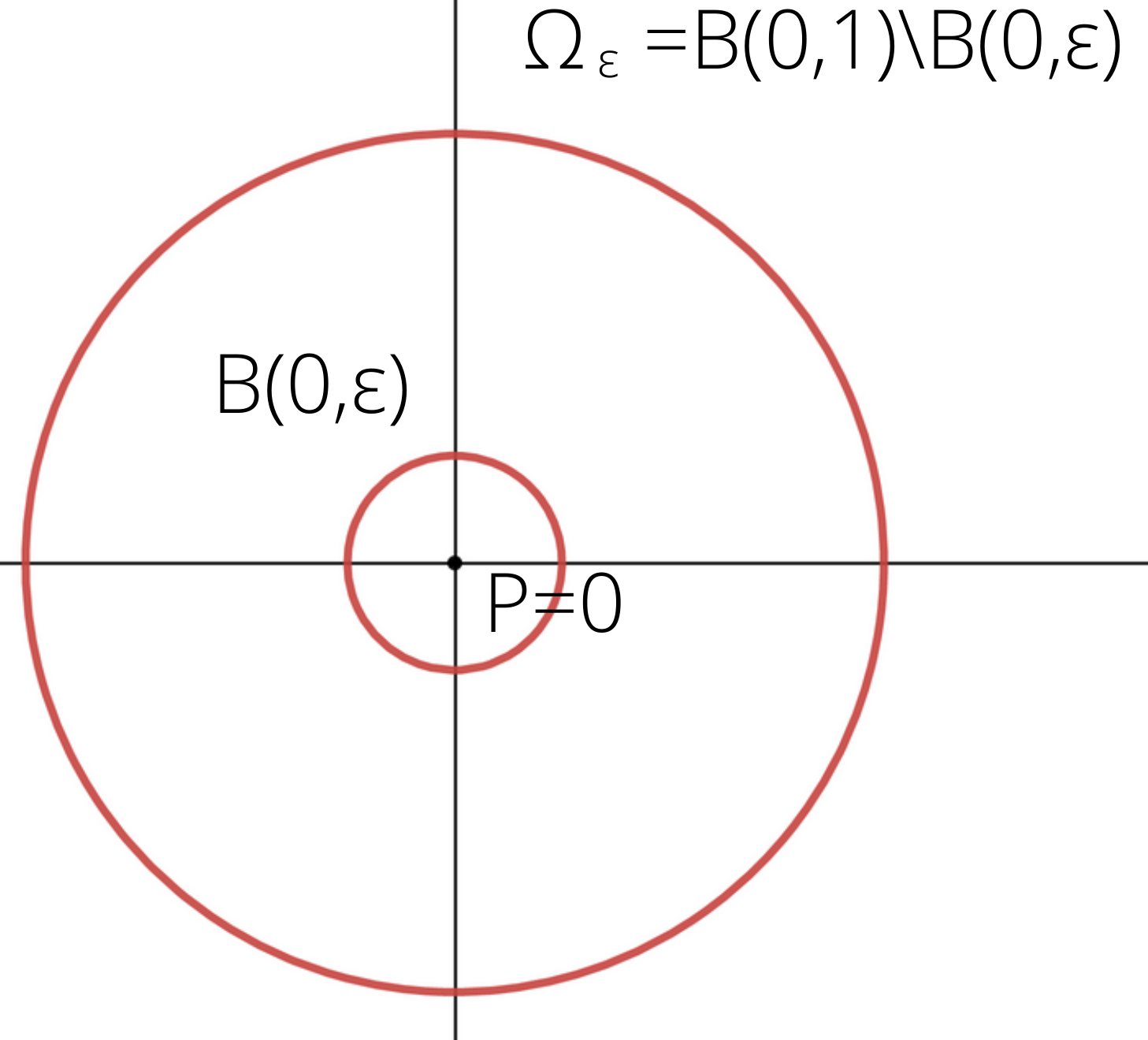}
{\scriptsize \hbox{$P=0$ (infinitely many critical points)}}
\end{minipage}
\end{figure}
\end{cor}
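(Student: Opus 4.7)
The plan is to split into the two cases $P=0$ and $P\neq 0$, each of which reduces to something already available: the first is essentially a symmetry observation (exactly the situation of Remark \ref{Ir}), while the second is a direct application of Corollary \ref{th1-2dd}, once we check that its two hypotheses are satisfied for $\Omega=B(0,R)$.

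For the case $P=0$, the domain $\Omega_\e=B(0,R)\setminus B(0,\e)$ is a concentric annulus, which is invariant under the full orthogonal group $O(N)$. By uniqueness of the Green's function, $G_{\Omega_\e}(Ax,Ay)=G_{\Omega_\e}(x,y)$ for any $A\in O(N)$, hence $\mathcal{R}_{\Omega_\e}$ is radial. Since $\mathcal{R}_{\Omega_\e}\to +\infty$ both as $|x|\to R^-$ and as $|x|\to \e^+$, the radial profile attains an interior minimum at some radius $\rho_\e\in(\e,R)$, and the full sphere $\{|x|=\rho_\e\}$ consists of critical points of $\mathcal{R}_{\Omega_\e}$; this gives infinitely many critical points.

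For the case $P\neq 0$, I would apply Corollary \ref{th1-2dd} to $\Omega=B(0,R)$. The hypotheses require (i) $\nabla\mathcal{R}_\Omega(P)\neq 0$ and (ii) all critical points of $\mathcal{R}_\Omega$ in $\Omega$ are non-degenerate. Both follow from the explicit radial structure of the Robin function of the ball: by the same orthogonal invariance argument as above, $\mathcal{R}_{B(0,R)}(x)$ depends only on $|x|$, and a direct computation (or the well-known explicit formula for $G_{B(0,R)}$ via Kelvin inversion) shows that $\mathcal{R}_{B(0,R)}(x)=c_N(R^2-|x|^2)^{2-N}$-type expression for $N\geq 3$ (respectively logarithmic for $N=2$), which is strictly increasing in $|x|$ with positive-definite Hessian at $0$. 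Consequently $0$ is the unique critical point, it is non-degenerate, and $\nabla\mathcal{R}_\Omega(P)\neq 0$ whenever $P\neq 0$. Corollary \ref{th1-2dd} then yields exactly $1+1=2$ critical points of $\mathcal{R}_{\Omega_\e}$, all non-degenerate.

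There is no real obstacle here: the whole content lies in Theorem \ref{I6} and Corollary \ref{th1-2dd}, which have already been established; this corollary is essentially a verification that $B(0,R)$ satisfies the hypotheses, together with the transparent symmetry remark when $P=0$. The only point worth double-checking is the radial monotonicity of $\mathcal{R}_{B(0,R)}$ and the strict positivity of its Hessian at the origin, both of which are immediate from the closed-form Kelvin expression of the Green function on the ball.
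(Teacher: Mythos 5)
Your proposal is correct and follows exactly the same route the paper takes: for $P\ne0$, check the two hypotheses of Corollary \ref{th1-2dd} for $\Omega=B(0,R)$ (the paper simply invokes the known fact that the ball's Robin function has a unique non-degenerate critical point at the center, which your explicit Kelvin-inversion computation confirms), and for $P=0$, appeal to the radial symmetry of the annulus together with the blow-up of $\mathcal{R}_{\Omega_\e}$ on both boundary components, which is precisely the content of Remark \ref{Ir}.
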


In next theorem we study what happens when $\nabla \mathcal{R}_\O(P)=0$. This case is more delicate and it seems very hard to give a complete answer. However, in some cases it is possible to compute the number of the critical points, as stated in the following.
\vskip0.2cm
 \begin{teo}\label{th1-2}
Suppose $\Omega$ is a bounded smooth domain in $\R^N$, $N\ge2$ and $P\in\Omega$. If
\begin{itemize}
\item $\nabla \mathcal{R}_\O(P)=0$.
\item The critical point $P$ is non-degenerate, i.e. $det\big(Hess\big(\mathcal{R}_\O(P)\big)\big)\neq 0$.
\item The Hessian matrix $Hess\big(\mathcal{R}_\O(P)\big)$ has $m\le N$ positive  eigenvalues
$0<\lambda_1\leq \lambda_{2} \leq \dots \leq \lambda_m$ with associated eigenvectors $v_1,\dots,v_m$ with $|v_i|=1$ for $i=1,\cdots,m$.
\end{itemize}
Assume that $B(P,r)\subset\O$ admits $P$ as the only critical point of $\mathcal{R}_\O$.
If the eigenvalue $\lambda_l$ is simple for some $l\in \{1,\cdots,m\}$, then we have
two non-degenerate critical  points $x_{l,\e}^\pm$ of $\mathcal{R}_{\O_\e}(x)$ in $B(P,r)\setminus B(P,\e)$
which satisfy
\begin{equation}\label{11-1-02}
x_{l,\e}^\pm=P\pm
\begin{cases}
\left(\frac{ 2+o(1) }{N\omega_N\la_l}\right)^\frac{1}{2N-2}
\varepsilon^{\frac{N-2}{2N-2}}v_l&\hbox{if }N\ge3,\\[4mm]
\widehat r_{\e,l}\Big(1+o(1)\Big)v_l~&\mbox{if}~N=2,
\end{cases}
\end{equation}
where
 $v_l$  is the $l$-th eigenvector associated to $\la_l$ with $|v_l|=1$, $\widehat r_{\e,l}$ is the
 unique solution of
 \begin{equation*}
 r^2-\frac{\ln r}{\lambda_l\pi \ln \e}=0~~\mbox{in}~(0,\infty).
 \end{equation*}
 Finally it holds
  \begin{equation}\label{Irob}
\mathcal{R}_{\O_\e}(x_{l,\e}^\pm)\to \mathcal{R}_\O(P).
 \end{equation}
Moreover, if all the positive eigenvalues of $\mathcal{R}_\O(P)$ are simple we have that
\begin{equation}\label{In}
\sharp\Big\{\mbox{critical points of  $\mathcal{R}_{\O_\e}(x)$ in $B(P,r)\setminus B(P,\e)$}\Big\}=2m
\end{equation}
and all critical points satisfy \eqref{11-1-02} for $l=1,\cdots,m$.
\end{teo}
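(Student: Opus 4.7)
The plan is to follow the strategy of Remark \ref{R1}, but now Taylor-expand $\nabla\mathcal{R}_\O$ at $P$ starting from the Hessian, since $\nabla\mathcal{R}_\O(P)=0$. Writing $H:=\mathrm{Hess}(\mathcal{R}_\O(P))$ and $\nabla\mathcal{R}_\O(P+\xi)=H\xi+O(|\xi|^2)$, substitution into the uniform expansion \eqref{I8} for $N\ge 3$ gives
\begin{equation*}
\nabla\mathcal{R}_{\O_\e}(P+\xi)=H\xi-\frac{2\e^{N-2}}{N\omega_N}\frac{\xi}{\bigl(|\xi|^2-\e^2\bigr)^{N-1}}+O(|\xi|^2)+O\!\left(\frac{\e^{N-2}}{|\xi|^{N-1}}\right)+O(\e).
\end{equation*}
A size balance between the Hessian term and the singular term forces $|\xi|\sim\e^{(N-2)/(2N-2)}$. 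Setting $x=P+\e^{(N-2)/(2N-2)}z$ and dividing through by $\e^{(N-2)/(2N-2)}$ reduces the critical-point equation to
\begin{equation*}
F_\e(z):=Hz-\frac{2}{N\omega_N}\frac{z}{|z|^{2N-2}}+o(1)=0,
\end{equation*}
with the $o(1)$ uniform on compact subsets of $\R^N\setminus\{0\}$. The planar case $N=2$ is treated analogously via the algebraic identity $-\frac{r^2}{r^2-\e^2}+1=-\frac{\e^2}{r^2-\e^2}$, which combines the two singular contributions of \eqref{I9}; after isolating the balance $r^2\sim\frac{\ln r}{\la_l\pi\ln\e}$ and setting $x=P+\hat r_{\e,l}z$, one obtains the reduced equation $\widetilde F_\e(z):=Hz-\la_l z/|z|^2+o(1)=0$.

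For either limit equation $Hz$ must be parallel to $z$, so $z$ is a real eigenvector of $H$ with positive eigenvalue; solving for the length yields the solutions $z_l^\pm=\pm\rho_l v_l$ with $\rho_l=(2/(N\omega_N\la_l))^{1/(2N-2)}$ for $N\ge 3$ and $\rho_l=1$ for $N=2$, matching the asymptotics claimed in \eqref{11-1-02}. The Jacobian of the limit map at these points equals
\begin{equation*}
DF(z_l^\pm)=H-\la_l I+(2N-2)\la_l\, v_l\otimes v_l\quad(N\ge 3),\qquad H-\la_l I+2\la_l\, v_l\otimes v_l\quad(N=2),
\end{equation*}
which is invertible exactly when $\la_l$ is a simple eigenvalue of $H$: eigenvectors of $H$ orthogonal to $v_l$ are rescaled by $\la_j-\la_l\ne 0$, while along $v_l$ the rank-one correction replaces the zero eigenvalue of $H-\la_l I$ by $(2N-2)\la_l>0$ (resp.\ $2\la_l$). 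The implicit function theorem applied to $F_\e$ (resp.\ $\widetilde F_\e$) then produces two non-degenerate critical points $x_{l,\e}^\pm$ of $\mathcal{R}_{\O_\e}$ satisfying \eqref{11-1-02}. The Robin value convergence \eqref{Irob} follows by inserting $x_{l,\e}^\pm$ into the $C^0$-expansion in the boxed display preceding this theorem: the correction $\mathcal{R}_{B^c_\e}(x_{l,\e}^\pm)$ vanishes in the limit since $|x_{l,\e}^\pm-P|/\e\to\infty$, and the explicitly displayed remainders vanish by inspection.

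To establish the counting identity \eqref{In} under the hypothesis that every positive eigenvalue of $H$ is simple, proceed by exhaustion: let $x_\e\in B(P,r)\setminus B(P,\e)$ be any critical point of $\mathcal{R}_{\O_\e}$. The $C^1$-convergence on compact subsets of $\ol\O\setminus\{P\}$, together with the assumption that $P$ is the only critical point of $\mathcal{R}_\O$ in $B(P,r)$, forces $x_\e\to P$. Writing $x_\e=P+s_\e z_\e$ with $|z_\e|=1$ and comparing the terms in \eqref{I8}--\eqref{I9}, the scale $s_\e=O(\e)$ is ruled out because $|\nabla\mathcal{R}_{\O_\e}|\to\infty$ near $\partial B(P,\e)$; the scale $s_\e\gg r_\e$ is ruled out because then $s_\e Hz_\e$ dominates and is nonzero by non-degeneracy of $H$; the intermediate scale $\e\ll s_\e\ll r_\e$ is ruled out because the singular perturbation dominates and cannot vanish. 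Hence $s_\e\sim r_\e$ (resp.\ $\hat r_{\e,l}$), a subsequential limit $z_\infty$ of $z_\e$ solves the limit equation and must therefore equal $\pm v_l$ for some $l\in\{1,\dots,m\}$, and the local uniqueness guaranteed by the implicit function theorem at each of the $2m$ limit points closes the count. The main obstacle, and the reason why the statement restricts to simple positive eigenvalues, is that a multiple eigenvalue $\la_l$ produces an entire sphere of solutions $\{z\in\ker(H-\la_l I):|z|=\rho_l\}$ of the limit equation, with the Jacobian $DF$ singular along this sphere; resolving that case would require unfolding the degeneracy through a Lyapunov--Schmidt reduction driven by the higher-order Taylor data of $\mathcal{R}_\O$ at $P$.
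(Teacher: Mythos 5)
Your overall strategy coincides with the paper's: rescale around $P$ at the scale $\e^{(N-2)/(2N-2)}$ (resp. $\widehat r_{\e,l}$), identify the limit vector field $Hz-\frac{2}{N\omega_N}\frac{z}{|z|^{2N-2}}$ (this is exactly $\nabla\widehat F$ in the paper), observe that its zeros are $\pm\rho_l v_l$ on the eigenlines of the positive eigenvalues, compute the Jacobian $H-\la_l I+(2N-2)\la_l\,v_l\otimes v_l$ whose spectrum $\{\la_j-\la_l\}_{j\ne l}\cup\{(2N-2)\la_l\}$ is exactly the paper's formula \eqref{B27}, and close the count by an exhaustion of scales which is the content of Proposition \ref{d1}. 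All of these computations are correct, including the $N=2$ bookkeeping and the identification of the correct limit coefficient $\la_l$ after rescaling by $\widehat r_{\e,l}$.

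There is, however, one genuine gap: the step ``the implicit function theorem applied to $F_\e$ then produces two non-degenerate critical points.'' Everything you have derived up to that point is a $C^0$ statement about the rescaled gradient field $F_\e$ (the expansions \eqref{I8}--\eqref{I9} control $\nabla\mathcal{R}_{\O_\e}$ only, and the remainders $O(\cdot)$ cannot be differentiated). To obtain local uniqueness of the zero of $F_\e$ near $z_l^\pm$ and, above all, the non-degeneracy of $x_{l,\e}^\pm$ as critical points of $\mathcal{R}_{\O_\e}$, you need $DF_\e\to DF$ uniformly near $z_l^\pm$, i.e. a uniform expansion of $\nabla^2\mathcal{R}_{\O_\e}$ at the scale $\e^{(N-2)/(2N-2)}$. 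This does not follow from the gradient expansion and is not free: the paper proves it separately (Proposition \ref{prop-p2} and Corollary \ref{corC2}), by applying the interior gradient estimate for harmonic functions (Lemma \ref{lem2-3}) to the differences $H_{\O}-H_{\O_\e}$ and $H_{B_\e^c}-H_{\O_\e}$, at the price of remainders involving $dist(x,\partial B_\e)^{-1}$ which must then be checked to be $o(1)$ after rescaling precisely because $|x_{l,\e}^\pm|/\e\to\infty$. Without this ingredient your argument yields existence (e.g. by degree theory, since $\deg(\nabla\widehat F,B(z_l^\pm,\delta),0)\ne0$) and the asymptotics \eqref{11-1-02}, but neither the non-degeneracy of $x_{l,\e}^\pm$ nor the exactness of the count \eqref{In}, since a priori a small ball around $z_l^\pm$ could contain several zeros of $F_\e$. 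The same caveat applies to your exhaustion argument: to conclude that each of the $2m$ limit points carries exactly one critical point you again need the $C^1$ convergence of $F_\e$.
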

\begin{rem}
The condition $det\big(Hess\mathcal{R}_\O(P)\big)\neq 0$ is widely verified. In \cite{mipi} it was proved that it holds up small perturbations of the domain $\O$.
Note that only positive eigenvalue of the Hessian matrix of $\mathcal{R}_\O(P)$ ``generates'' critical points for $\mathcal{R}_\O$ (see Proposition \ref{d1}). Hence saddle points of
$\mathcal{R}_\O$ give less contribution to the number of critical points of $\mathcal{R}_{\O_\e}$.
\end{rem}
\begin{cor}\label{Ic}
Let $\O$ be a convex and symmetric domain (see Gidas, Ni and Nirenberg \cite{gnn}) with respect to the origin.
We have that
\vskip 0.1cm
\noindent\textup{(i)} If $P\neq 0$, then  $\mathcal{R}_{\O_\e}(x)$ admits exactly two non-degenerate critical points in $\Omega_\e$.
\vskip 0.1cm
\noindent\textup{(ii)} If $P= 0$ and all the eigenvalues of $Hess\big(\mathcal{R}_\O(0)\big)$ are simple then $\mathcal{R}_{\O_\e}(x)$ admits exactly $2N$ non-degenerate critical points in $\Omega_\e$.\\
\begin{minipage}{1.5\linewidth}
\begin{tikzpicture}[scale=0.5]
\begin{axis} [axis y line=left,
y axis line style={opacity=0},
axis x line=left,
x axis line style={opacity=0},
axis equal, axis lines* = center,
xtick = \empty, ytick = \empty,legend style={anchor=north}
shift = {(axis cs: 917, 0)}]

\addplot[only marks,color=red,mark size=0.8pt] coordinates {({sin(0)},{75*cos(0)})};
\addplot[only marks,color=red,mark size=0.8pt] coordinates {({sin(0)},{-75*cos(0)})};
\addplot[only marks,color=red, mark size=0.8pt] coordinates {({75*cos(0)},{sin(0)})};
\addplot[only marks,color=red, mark size=0.8pt] coordinates {({-75*cos(0)},{sin(0)})};
\addplot[only marks,mark size=0.9pt] coordinates {(0,0)};

 \addplot
[domain=0:360,variable=\t,
samples=400,smooth]
({(184*cos(t)},{(120*sin(t)});
 \addplot
[domain=0:360,variable=\t,
samples=700,smooth]
({(50*cos(t)},{(50*sin(t)});
\node[label={0:{$\textcolor{red}{x_{2,\e}^+\to0}$}}] at (axis cs: {sin(0)},{75*cos(0)}) {};
\node[label={270:{$\textcolor{red}{x_{1,\e}^+\to0}$}}] at (axis cs: {85*cos(0)},{sin(0)}) {};
\node[label={270:{$\textcolor{red}{x_{1,\e}^-\to0}$}}] at (axis cs: {-85*cos(0)},{50*sin(0)}) {};
\node[label={0:{$\textcolor{red}{x_{2,\e}^-\to0}$}}] at (axis cs: {sin(0)},{-75*cos(0)}) {};
\node[label={30:{$\partial B(0,\e)$}}] at (axis cs: {30*cos(0)},{20*cos(0)}) {};
\node[label={10:{$\partial\O$}}] at (axis cs: {133*cos(0)},{125*sin(0)}) {};
\node[label={0:\tiny{$\!\!\!P\equiv0$}}] at (axis cs: {sin(0)},{sin(0)}) {};
\end{axis}\end{tikzpicture}
\end{minipage}

\end{cor}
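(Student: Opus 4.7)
The plan is to reduce both parts of the corollary to direct applications of Corollary \ref{th1-2dd} and Theorem \ref{th1-2} respectively, after collecting the relevant structural facts about $\mathcal{R}_\O$ for a convex, symmetric domain.

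First I would record the following preliminaries about $\mathcal{R}_\O$. By convexity, $\mathcal{R}_\O$ has a unique critical point in $\O$ (Caffarelli--Friedman \cite{cf2} for $N=2$, Cardaliaguet--Tahraoui \cite{ct} for $N\geq 3$). By the symmetry of $\O$, this critical point is fixed by every reflection through a coordinate hyperplane, hence must coincide with the origin. Since $\mathcal{R}_\O \to +\infty$ as $x \to \pa\O$, this unique critical point is a minimum, so $Hess(\mathcal{R}_\O(0))$ is positive semidefinite; its non-degeneracy in the symmetric setting follows from the results of \cite{g2}, and together with semidefiniteness this yields strict positive definiteness. In the notation of Theorem \ref{th1-2} this means $m=N$. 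Finally, for any fixed $r>0$ one has $|\nabla \mathcal{R}_\O(x)|\geq c_r>0$ on $\O\setminus B(0,r)$: uniqueness of the critical point gives the bound on compact subsets of $\O$, and the blow-up $\mathcal{R}_\O\to+\infty$ at $\pa\O$, together with the boundary asymptotics of the regular part of the Green function, gives it in a neighborhood of $\pa\O$.

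Part (i) is then immediate. If $P\neq 0$, then $P$ is not the unique critical point of $\mathcal{R}_\O$, so $\nabla \mathcal{R}_\O(P)\neq 0$; since the only critical point of $\mathcal{R}_\O$ is non-degenerate, Corollary \ref{th1-2dd} produces exactly $1+1=2$ non-degenerate critical points of $\mathcal{R}_{\O_\e}$ in $\O_\e$. For part (ii), with $P=0$ the preliminaries verify the hypotheses of Theorem \ref{th1-2} with $m=N$ and, by assumption, all eigenvalues simple. This produces $2N$ non-degenerate critical points of $\mathcal{R}_{\O_\e}$ in $B(0,r)\setminus B(0,\e)$ for some small $r$ and all $\e$ small enough, with locations given by \eqref{11-1-02}. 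To show that no further critical points exist in $\O_\e\setminus B(0,r)$, I would invoke the uniform $C^1$-expansions \eqref{I8} and \eqref{I9}: on the fixed region $\O\setminus B(0,r)$ one has $\nabla \mathcal{R}_{\O_\e}(x)=\nabla \mathcal{R}_\O(x)+o(1)$ as $\e\to 0$, uniformly in $x$. Combined with the lower bound $|\nabla \mathcal{R}_\O|\geq c_r>0$ established above, this forces $|\nabla \mathcal{R}_{\O_\e}|>0$ on $\O_\e\setminus B(0,r)$ for $\e$ small, ruling out extra critical points.

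The main difficulty is contained in the preliminaries, specifically in asserting the strict positive definiteness of $Hess(\mathcal{R}_\O(0))$, which packages together uniqueness, minimum character, and non-degeneracy of the critical point. For $N=2$ the non-degeneracy follows from the Liouville equation arguments of \cite{cf2}; for $N\geq 3$ it is the delicate point, handled in the symmetric setting by \cite{g2}. Once these preliminaries are in hand, both parts of the corollary follow essentially by bookkeeping with the main theorems of the paper.
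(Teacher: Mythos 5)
Your proof is correct and follows essentially the same route as the paper: establish that $\mathcal{R}_\O$ has a unique non-degenerate critical point at the origin, which is a minimum (so the Hessian is positive definite, giving $m=N$), and then apply Corollary \ref{th1-2dd} for part (i) and Theorem \ref{th1-2} (together with the global counting argument, which the paper packages as Corollary \ref{th1-2tt}) for part (ii). You supply slightly more detail than the paper's very terse proof — in particular the explicit argument that no critical points survive in $\O_\e\setminus B(0,r)$ via the uniform $C^1$-convergence and the lower bound $|\nabla\mathcal{R}_\O|\ge c_r>0$ away from the origin — but this is the same reasoning the paper uses inside the proofs of Corollaries \ref{th1-2dd} and \ref{th1-2tt}.
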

\begin{rem}
An example of a domain $\O$ which satisfies the conditions in Theorem \ref{th1-2} for $N\geq 2$ is the following
(see Section \ref{es}),
$$\Omega_\delta=\Big\{x\in \R^N,~ \displaystyle\sum^N_{i=1}x_i^2\Big(1+\alpha_i \delta\Big)^2 <1~~\mbox{with}~~\delta>0\hbox{ and }0<\alpha_1\leq \alpha_2\le\cdots \leq \alpha_N\Big\}.$$
We will give a precise description of the Robin function $\mathcal{R}_{\O_\delta}$ for $\delta$ small in Theorem \ref{th1-3a}, Section \ref{es}.
\end{rem}
\begin{rem}
The proof of Theorem \ref{th1-2} uses again the estimates  \eqref{I8} and  \eqref{I9}. In this case we get that
\begin{equation*}
\frac{\partial\mathcal{R}_{\O_\e}(P+\e^\frac{N-2}{2N-2}y)}{\partial x_i}\sim\sum^N_{j=1}\frac{\partial^2 \mathcal{R}_\O(P)}{\partial x_i\partial x_j}y_j -\frac2{N\omega_N}\frac{y_i} {|y|^{2N-2}}\,\,\,~\mbox{for}~N\geq 2.
\end{equation*}
In other words, after diagonalization we get that
$$\nabla\mathcal{R}_{\O_\e}\left(P+\e^\frac{N-2}{2N-2}y\right)\sim\nabla\left(
\sum_{j=1}^N\lambda_jy_j^2-\frac2{N\omega_N(4-2N)}\frac1{|y|^{2N-4}}\right)\,\,\,~\mbox{for}~N\geq 3.$$
Note that if $0<\lambda_1<\lambda_2<\cdots<\lambda_m$ and $\lambda_j<0$ for $j=m+1,\cdots,N$, then the function $F(y)=\displaystyle\sum_{j=1}^N\lambda_jy_j^2-\frac2{N\omega_N(4-2N)}\frac1{|y|^{2N-4}}$ admits $2m$ non-degenerate critical points and the claim follows as in Remark \ref{R1}. On the other hand, if some positive eigenvalue is multiple, say $\lambda_1=\lambda_2=\cdots=\lambda_k>0$, then the function $F$ is spherically symmetric in $(y_1,y_2,\cdots,y_k)$. This implies that there is a set of critical points of $F$ given by a sphere $S^k$. Since we have a manifold of critical points, the number of the critical points depends on the approximation function and it leads to a {\bf third order} expansion of $\mathcal{R}_{\O_\e}(x)$.  Further considerations on multiple eigenvalues deserve to be studied apart.
However, when $\O$ is a symmetric domain, we obtain partial results on the critical points of $\mathcal{R}_{\O_\e}(x)$ (see Theorem \ref{th1-2a}).

\end{rem}

\begin{rem}
Our results can be iterated to handle the case in which $k~(k\geq 2)$  small holes are removed from $\O$.  Moreover, using similar ideas,  our main theorems are true if we replace $B(P,\e)$ by a small convex set.
\end{rem}

As in the case $\nabla\mathcal{R}_{\O}(P)=0$ we have the following corollary.
\begin{cor}\label{th1-2tt}
Suppose $\Omega$ is a bounded smooth domain in $\R^N$, $N\ge2$ and $P\in\Omega$.
If $\nabla \mathcal{R}_\O(P)=0$, all critical points of $ \mathcal{R}_\O$ are non-degenerate
and $Hess\big(\mathcal{R}_\O(P)\big)$ has $m\leq N$ positive eigenvalues which are all simple, for small $\e$ it holds
\begin{equation*}
\begin{split}
 \sharp\big\{\hbox{critical points of $\mathcal{R}_{\O_\e}(x)$ in $\O_\e$}\big\}= \sharp\big\{\hbox{critical points of $\mathcal{R}_\O(x)$ in $\O$}\big\}
+2m -1.
\end{split}
\end{equation*}
Finally all critical points of $\mathcal{R}_{\O_\e}(x)$ are non-degenerate.
\end{cor}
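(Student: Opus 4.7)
The strategy is to split $\O_\e$ into a small annulus $B(P,r_0)\setminus B(P,\e)$ around the hole and its complement, and to count critical points in each region with tools already developed in the paper. Fix $r_0>0$ so small that $B(P,r_0)\subset\O$ contains no critical point of $\mathcal{R}_\O$ other than $P$, and list as $Q_1,\dots,Q_{k-1}$ the critical points of $\mathcal{R}_\O$ lying outside $B(P,r_0)$; these are finitely many and all non-degenerate by assumption, and they lie in a fixed compact subset of $\O$ since $\mathcal{R}_\O\to+\infty$ at $\pa\O$.

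Inside the annulus the hypotheses of Theorem \ref{th1-2} are satisfied (all positive eigenvalues of $Hess\big(\mathcal{R}_\O(P)\big)$ are simple), so that theorem already produces exactly $2m$ non-degenerate critical points $x_{l,\e}^\pm$, $l=1,\dots,m$.

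For the complement $\O\setminus B(P,r_0)$, on any compact set $K\subset\O\setminus\{P\}$ the difference $H_{\O_\e}-H_\O$ is harmonic in $x$ with boundary data tending to $0$ by \eqref{eqH}, so interior elliptic estimates upgrade \eqref{eqH} to $\mathcal{R}_{\O_\e}\to\mathcal{R}_\O$ in $C^2_{\mathrm{loc}}(\O\setminus\{P\})$. The implicit function theorem applied to $\nabla\mathcal{R}_{\O_\e}=0$ at each non-degenerate $Q_i$ then yields, for $\e$ small, a unique nearby critical point $y_{i,\e}\to Q_i$, whose Hessian converges to $Hess\,\mathcal{R}_\O(Q_i)$ and is therefore non-degenerate. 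No other critical point survives in $\O\setminus B(P,r_0)$: on any compact set $K'\subset\O\setminus(\{P\}\cup\{Q_1,\dots,Q_{k-1}\})$ one has $\inf_{K'}|\nabla\mathcal{R}_\O|>0$, and $C^1$-closeness transfers this bound to $\mathcal{R}_{\O_\e}$; near $\pa\O$ the domain-monotonicity $H_{\O_\e}\ge H_\O$ forces $\mathcal{R}_{\O_\e}\to+\infty$ uniformly in $\e$, hence no critical points can drift to the outer boundary.

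Summing the two counts gives $2m+(k-1)=k+2m-1$ non-degenerate critical points, which is exactly the claimed formula. The local analysis at $P$ is already provided by Theorem \ref{th1-2} and the local analysis at each $Q_i$ is a routine implicit function theorem step, so the principal obstacle I anticipate lies in the global compactness argument that prevents stray critical points from accumulating in the intermediate region or at $\pa\O$; this is where the careful $C^1$-expansions \eqref{I8} and \eqref{I9}, together with the domain-monotonicity of the Green function, really do the work.
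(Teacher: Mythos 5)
Your proposal is correct and mirrors the paper's strategy: the paper does not write out a proof of Corollary \ref{th1-2tt} separately, but the argument it has in mind is exactly the one it does spell out for Corollary \ref{th1-2dd} (split $\O_\e$ into small neighborhoods of the critical points of $\mathcal R_\O$, an intermediate compact region, and the annulus $B(P,r)\setminus B(P,\e)$; count via $C^2$ convergence and non-degeneracy near the outer critical points, rule out the middle region by uniform $C^1$ convergence, and apply Theorem \ref{th1-2} near $P$). Your decomposition, bookkeeping $2m + (k-1) = k + 2m - 1$, and use of Theorem \ref{th1-2} all agree with that.

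One point that is worth tightening: the clause \emph{``$\mathcal R_{\O_\e}\to+\infty$ uniformly in $\e$ near $\pa\O$, hence no critical points can drift to the outer boundary''} is not by itself a valid implication — blow-up of a function near the boundary does not preclude critical points in a thin $\e$-dependent boundary layer. What actually closes this gap is the \emph{uniformity} of the expansion in Proposition \ref{prnab} (equivalently \eqref{I8}, \eqref{I9}) all the way up to $\pa\O$, so that $\nabla\mathcal R_{\O_\e}\to\nabla\mathcal R_\O$ uniformly on $\overline{\O_\e}\setminus B(P,r)$ and the lower bound $|\nabla\mathcal R_\O|\ge c>0$ near $\pa\O$ transfers; domain monotonicity alone is only an auxiliary observation. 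Since you do cite \eqref{I8}, \eqref{I9} as doing the real work, the gap is more of phrasing than of substance, but the sentence as written suggests the monotonicity argument suffices, which it does not.
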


As said before the non-degeneracy of critical points
of the Robin function plays an important role in PDEs. Now  we would like to give applications of our results to
some elliptic problems. For example, let us consider the following,
\begin{equation}\label{B-N}
\begin{cases}
-\Delta u= u^{p}  &{\text{in}~\O_\e},\\
u>0  &{\text{in}~\O_\e},\\
u=0  &{\text{on}~\partial\O_\e},
\end{cases}
\end{equation}
where the solution $u_{\e,p}$ satisfies either
\begin{equation}\label{aa1}
\lim_{p\rightarrow +\infty} p \int_{\Omega}|\nabla u_{\e,p}|^2=8\pi e,~ ~\mbox{for} ~~N=2,
\end{equation}
or
\begin{equation}\label{aa}
\lim_{p\rightarrow \frac{N+2}{N-2}^-}
\frac{\displaystyle\int_{\O_\e}|\nabla u_{\e,p}|^2}{\left(\displaystyle\int_{\O_\e}u_{\e,p}^{p+1}dx\right)^{\frac{1}{p+1}}}=S,~~\mbox{for}~~N\geq 3,
\end{equation}
with $S$ the best constant in Sobolev inequality. We have following results.
\begin{teo}\label{th1-3}
Suppose $N=2$ or $N\geq 4$, $\Omega\subset \R^N$ is a convex domain and the critical point of $\mathcal{R}_\O(x)$ is non-degenerate (the uniqueness was proved in \cite{cf,ct}, the non-degeneracy in \cite{cf} for $N=2$). Then there exists some $\e_0>0$ such that for any fixed $\e\in (0,\e_0]$, $p$ large for $N=2$ or $\frac{N+2}{N-2}-p>0$ small for $N\geq 4$,
\vskip 0.2cm
\noindent\textup{(1)} if $\nabla \mathcal{R}_\O(P)\neq 0$ we have exact  two solutions of \eqref{B-N} satisfying \eqref{aa1} for $N=2$ or  \eqref{aa} for $N\geq 4$;
\vskip 0.2cm
\noindent\textup{(2)}  if $\nabla \mathcal{R}_\O(P)=0$ and all the eigenvalues of $\nabla^2\mathcal{R}_\O(P)$ are simple, then we have  exact  $2N$ solutions of \eqref{B-N} satisfying \eqref{aa1} for $N=2$ or  \eqref{aa} for $N\geq 4$.
\end{teo}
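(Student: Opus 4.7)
The proof naturally splits into two independent tasks: (i) count the non-degenerate critical points of $\mathcal{R}_{\Omega_\e}$ on $\Omega_\e$ using the results already established in this paper, and (ii) translate this count into the number of solutions of \eqref{B-N} via the classical bijection between non-degenerate critical points of the Robin function and single-peak concentrating solutions.

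\textbf{Step 1: Counting critical points of $\mathcal{R}_{\Omega_\e}$.} The plan is to exploit convexity of $\Omega$ to pin down the structure of $\mathcal{R}_\Omega$. By \cite{cf,ct} the Robin function $\mathcal{R}_\Omega$ has a unique critical point $x_0\in\Omega$, non-degenerate by hypothesis; since $\mathcal{R}_\Omega\to+\infty$ at $\partial\Omega$, this critical point must be a minimum, so $\mathrm{Hess}\,\mathcal{R}_\Omega(x_0)$ is positive definite with $N$ positive eigenvalues. In case~(1), $\nabla\mathcal{R}_\Omega(P)\neq 0$ forces $P\neq x_0$, and Corollary~\ref{th1-2dd} yields exactly $1+1=2$ non-degenerate critical points of $\mathcal{R}_{\Omega_\e}$ for $\e\le\e_0$ small. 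In case~(2), $\nabla\mathcal{R}_\Omega(P)=0$ forces $P=x_0$, so $m=N$ and all eigenvalues are simple by assumption; Corollary~\ref{th1-2tt} then gives exactly $1 + 2N - 1 = 2N$ non-degenerate critical points of $\mathcal{R}_{\Omega_\e}$.

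\textbf{Step 2: Bijection with concentrating solutions.} Fix $\e\in(0,\e_0]$ so that Step~1 applies. The asymptotic conditions \eqref{aa1} and \eqref{aa} select \emph{single-bubble} solutions: any family $u_{\e,p}$ satisfying them is, after suitable rescaling, a standard bubble (Liouville bubble for $N=2$, Aubin--Talenti bubble for $N\ge 4$) centered at a concentration point $y^\star\in\Omega_\e$. I will invoke the classical Lyapunov--Schmidt reduction (Rey, Bahri--Li--Rey, Grossi--Pacella for $N\ge 4$; Esposito--Grossi--Pistoia, Del~Pino--Kowalczyk--Musso for $N=2$), which yields a bijective correspondence
\[
\bigl\{\text{non-degenerate critical points of }\mathcal{R}_{\Omega_\e}\bigr\}
\;\longleftrightarrow\;
\bigl\{\text{solutions of \eqref{B-N} under \eqref{aa1} or \eqref{aa}}\bigr\}.
\]
Existence (one solution per critical point) comes from a finite-dimensional reduction whose reduced functional has as its critical set precisely the non-degenerate critical points of $\mathcal{R}_{\Omega_\e}$. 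Completeness is obtained via blow-up analysis forcing $y^\star$ to be an interior critical point of $\mathcal{R}_{\Omega_\e}$, together with local uniqueness near such a non-degenerate point via the implicit function theorem applied to the reduction.

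\textbf{Main obstacle.} The delicate issue is the completeness direction on the perforated domain $\Omega_\e$: one must exclude concentration near the inner boundary $\partial B(P,\e)$ and rule out ``phantom'' solutions attached to the hole. The uniform estimates \eqref{I8}--\eqref{I9} are tailor-made for this: they show that for $\e$ small $\mathcal{R}_{\Omega_\e}$ has no critical point near $\partial B(P,\e)$ beyond those counted in Step~1, and they provide sufficiently uniform control of the regular part of the Green function to rerun the standard blow-up analysis on $\Omega_\e$ independently of $\e$. Once this is in place, combining the bijection of Step~2 with the count of Step~1 produces exactly $2$ solutions in case~(1) and exactly $2N$ solutions in case~(2).
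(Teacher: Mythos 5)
Your proof is correct and follows the same structure as the paper's: fix $\e$, count the non-degenerate critical points of $\mathcal{R}_{\O_\e}$ via Corollaries \ref{th1-2dd} and \ref{th1-2tt}, then invoke the standard correspondence between those critical points and single-bubble solutions (concentration $+$ local uniqueness at non-degenerate critical points). Your Step~1 is slightly more carefully argued than the paper's wording (which cites Theorem~\ref{th1-2} where Corollary~\ref{th1-2tt} is really the precise statement giving the total count of $2N$), and you explicitly record why $m=N$ for a convex domain.

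One remark on your \emph{Main obstacle} paragraph: it is a red herring. Once $\e\in(0,\e_0]$ is \emph{fixed}, $\O_\e$ is just some fixed smooth bounded domain, and the blow-up/concentration results of \cite{DIP,h,Rey} and the local-uniqueness results of \cite{blr,GILY} apply verbatim to it. The limit being taken in Step~2 is $p\to\infty$ (or $p\to\frac{N+2}{N-2}^-$), not $\e\to0$, so no uniformity in $\e$ is required there, and there is no special danger of ``phantom solutions attached to the hole'' beyond what the standard theory already rules out for an arbitrary smooth domain. The uniform-in-$\e$ estimates \eqref{I8}--\eqref{I9} are indeed crucial in this paper, but their role is confined to Step~1 (proving Theorems~\ref{I6} and \ref{th1-2} so that the conclusion holds for all $\e\le\e_0$), not to the concentration analysis for fixed $\e$.
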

\begin{SCfigure}[1.5]
\includegraphics[scale=0.15]{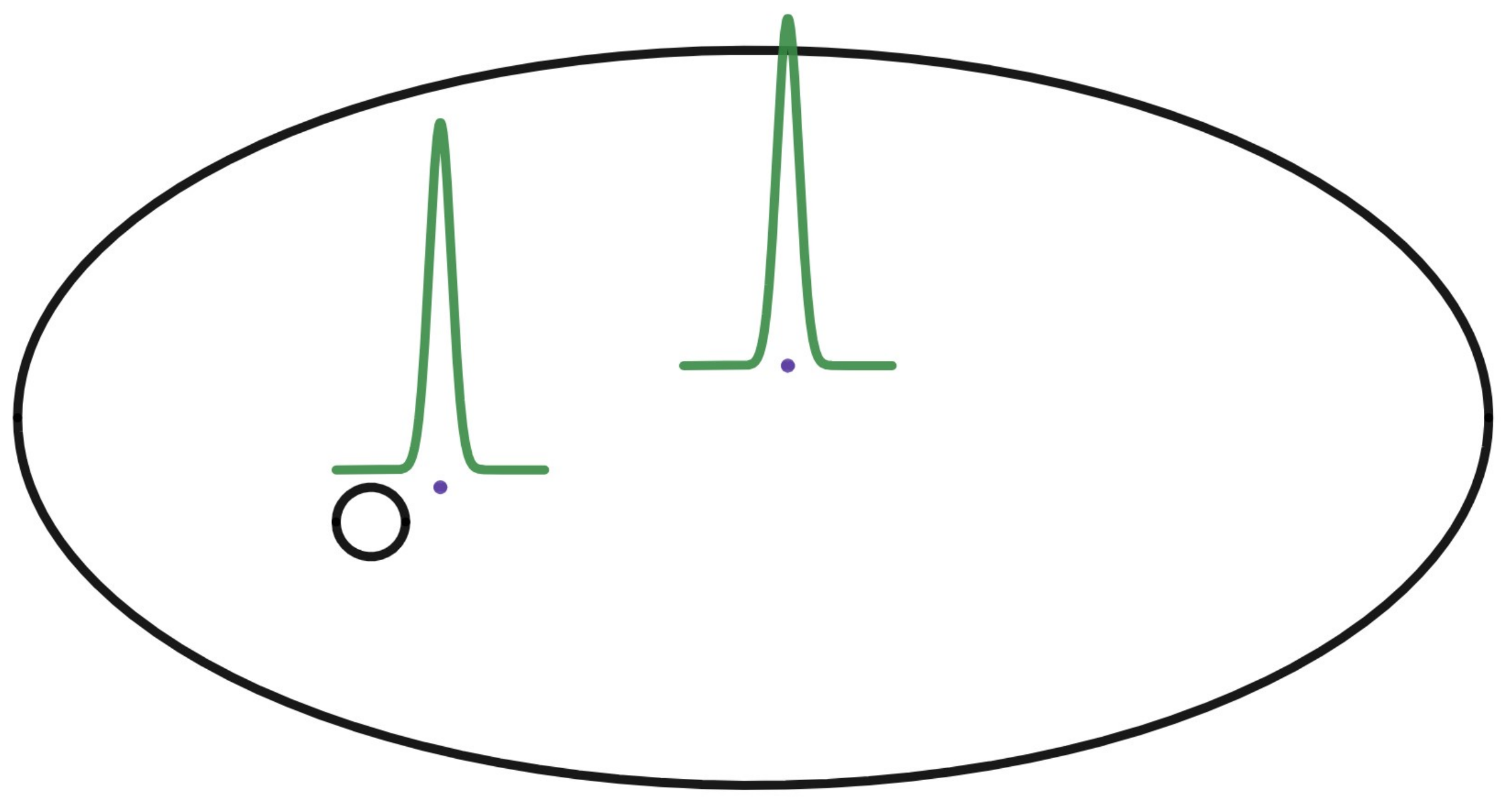}
\caption{{\em Picture of two solutions concentrating at critical points of $\mathcal R_{\O_\e}$}}
\end{SCfigure}
\begin{proof}[Proof of Theorem \ref{th1-3}]
Firstly, let us fix $\e\in(0,\e_0]$ such that Theorems \ref{I6} and \ref{th1-2} apply.
Then if $\nabla\mathcal R_\O(P)\neq 0$, from Corollary \ref{th1-2dd} we get that $\mathcal R_{\O_\e}$ admits exact  two critical points in $\Omega_\e$ which are non-degenerate. If  $\nabla \mathcal{R}_\O(P)=0$ and all the eigenvalues of $\nabla^2\mathcal{R}_\O(P)$ are simple, then Theorem \ref{th1-2} gives us that $\mathcal R_{\O_\e}$ admits exact  $2N$ critical points in $\Omega_\e$ which are non-degenerate.
Next it is known by \cite{DIP,h,Rey} that the solutions of \eqref{B-N} with
\eqref{aa1} or \eqref{aa} concentrate at critical points of $\mathcal R_{\O_\e}$  when $p$ is large for $N=2$ or $\frac{N+2}{N-2}-p>0$ small for $N\geq 4$. Moreover from \cite{blr,GILY}, we find that, using the non-degeneracy assumption of the critical points of $\mathcal R_{\O_\e}$, we have the {\em local} uniqueness of these solutions.
\end{proof}
\begin{rem}
Observe that in above corollary, the assumption $N\ge4$ instead of the natural one $N\ge3$ is due to technical reason in proving the uniqueness result in \cite{blr}. If the uniqueness result in  \cite{blr} is extended to $N=3$ we will get the claim also in this case.

Similar applications can also be given on Brezis-Nirenberg problem \cite{bn,Glangetas}, planar vortex patch in incompressible steady flow \cite{CGPY2019,CPY2015} and plasma problem \cite{cf,CPY2010} for some non-convex domains.
\end{rem}
The paper is organized as follows. In Section \ref{s2} we prove some lemmas which estimate the regular part $H_{\O_\e}$ in terms of $H_\O$ and $H_{\R^N\setminus B(P,\e)}$. Here the maximum principle for harmonic functions allows to get {\em uniform} estimates for $\mathcal{R}_{\O_\e}$ up to $\partial B(P,\e)$. These will be the basic tool to give the expansion of $\mathcal{R}_{\O_\e}$ and its derivatives which will be proved in Section \ref{s4}.
In Section \ref{s5} we consider the case  $\nabla\mathcal{R}_\O(P)\ne0$ and prove
Theorem \ref{I6} and Corollary \ref{th1-2dd}. In Section \ref{s6} we consider the  case  $\nabla\mathcal{R}_\O(P)=0$ and prove Theorem \ref{th1-2} and Corollaries \ref{Ic} and \ref{th1-2tt}. In Section \ref{es} we will give an example of domains which satisfy the assumptions of Theorem \ref{th1-2}.  Finally in the Appendix we recall some known properties of the Robin function in the exterior of the ball as well as some useful identities involving the Green function.

\section{Uniform estimates on the regular part of the Green function}\label{s2}
Set
$$\Omega_\e=\Omega\backslash B_\e~\mbox{where}~B_\e=B(P,\e)~\mbox{with}~P\in \Omega.$$
Observe that $B_\e\subset \Omega$ for $\e$ sufficiently small and we will always assume this is the case. We denote by
$$B^c_\e:=\R^N\backslash B_\e$$
and  without loss of generality, we take $P=0\in \Omega$.

In this section we will prove two crucial lemmas that will be repeatedly used in the proof of our expansion of the Robin function and its derivatives. In order to clarify their role, let us write down the following representation formula for the gradient of the Robin function proved in \cite{bp}, p.170:
for $x\in \Omega_\e$ and letting $\nu_y$ be the outer unit normal to the boundary of the domain,
\begin{equation}\label{10-09-21}
\begin{split}
\nabla \mathcal{R}_{\O_\e}(x)=&\int_{\partial\Omega_\e}\nu(y)\left(\frac{\partial G_{\O_\e}(y,x)}{\partial\nu_x}\right)^2d\sigma_y=
\int_{\partial\Omega_\e}\nu(y)\left(\frac{\partial G_{\O_\e}(x,y)}{\partial\nu_y}\right)^2d\sigma_y
\\=
&\int_{\partial\Omega}\nu(y)\left(\frac{\partial G_{\O_\e}(x,y)}{\partial\nu_y}\right)^2d\sigma-\int_{\partial B_\e }\frac y\e\left(\frac{\partial G_{\O_\e}(x,y)}{\partial\nu_y}\right)^2d\sigma_y
\end{split}
\end{equation}
=\big(using the identities $G_{\O_\e}=G_{\O}+H_{\O}-H_{\O_\e}$ and $G_{\O_\e}=G_{B^c_\e}+H_{B^c_\e}-H_{\O_\e}$\big)
\begin{subequations}\nonumber
\begin{empheq}[box=\widefbox]{align}
&\underbrace{\int_{\partial\Omega}\nu(y)\left(\frac{\partial G_\O(x,y)}{\partial\nu_y}\right)^2d\sigma_y}_{=\nabla \mathcal{R}_{\O}(x)}+\int_{\partial\Omega}\nu(y)\left(\frac{\partial \big(H_{\O}(x, y)-H_{\O_\e}(x,y)\big)}{\partial\nu_y}\right)^2d\sigma_y\nonumber\\
&+2\int_{\partial\Omega}\nu(y) \frac{\partial G_\O(x,y)}{\partial\nu_y} \frac{\partial \big(H_{\O}(x, y)-H_{\O_\e}(x,y)\big)}{\partial\nu_y} d\sigma_y\nonumber
\end{empheq}
\end{subequations}
+
\begin{subequations}
\begin{empheq}[box=\widefbox]{align}
&\underbrace{-\int_{\partial B_\e }\frac y\e\left(\frac{\partial G_{B^c_\e}(x,y)}{\partial\nu_y}\right)^2d\sigma_y}_{=\nabla \mathcal{R}_{B^c_\e}(x)\hbox{ by Lemma \ref{Lap}}}-\int_{\partial B_\e }\frac y\e\left(\frac{\partial \big(H_{B^c_\e}(x, y)-H_{\O_\e}(x,y)\big)}{\partial\nu_y}\right)^2d\sigma_y\nonumber\\
&-2\int_{\partial B_\e }\frac y\e\frac{\partial G_{B^c_\e}(x,y)}{\partial\nu_y} \frac{\partial \big(H_{B^c_\e}(x, y)-H_{\O_\e}(x,y)\big)}{\partial\nu_y} d\sigma_y.\nonumber
\end{empheq}
\end{subequations}
Hence in order to estimate the integrals in the boxes we need the behavior of $H_{\O}(x, y)-H_{\O_\e}(x,y)$ on $\partial\O$ and $H_{B^c_\e}(x, y)-H_{\O_\e}(x,y)$ on $\partial B_\e$ respectively. It will be done in the next lemmas.
\begin{lem}\label{llt}
For any $x\in \Omega_\e$ and $y\in \bar \Omega_\e$ , with $|y|\ge C_0>0$ and $i=1,\cdots,N$, it holds
\begin{equation}\label{a07-24-8t}
\frac{\partial \big(H_{\Omega}(x, y)-H_{\Omega_\e}(x,y)\big)}{\partial y_i} =
\begin{cases}
O\left(\frac{\e^{N-2}}{|x|^{N-2}}\right)+O(\e)\,\,\,~&\mbox{for}~N\geq 3,\\[3mm]
O\Big(\left|
 \frac{\ln|x|}{\ln \e}\right|\Big)+
O\Big(\frac{1}{|\ln \e|}\Big)\,\,\,~&\mbox{for}~N=2.
\end{cases}\end{equation}
\end{lem}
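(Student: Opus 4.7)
The strategy is to set $w(x,y) := H_\Omega(x,y) - H_{\Omega_\e}(x,y)$ and exploit the symmetry of the regular parts to view $w$ as harmonic in each variable on $\Omega_\e$, with $w \equiv 0$ on $\partial\Omega$ and $w(x,y) = -G_\Omega(x,y)$ on $\partial B_\e$. Fixing $y \in \bar\Omega_\e$ with $|y| \geq C_0 > 0$, I would regard $\partial_{y_i} w(\cdot, y)$ as a harmonic function of $x$ on $\Omega_\e$, vanishing on $\partial\Omega$ and equal to $-\partial_{y_i} G_\Omega(x,y)$ on $\partial B_\e$. Since $|y| \geq C_0$ keeps us well away from the singularity of $G_\Omega$, a first-order Taylor expansion in $x$ about the origin yields
\[
\partial_{y_i} G_\Omega(x,y) = \partial_{y_i} G_\Omega(0,y) + O(\e)
\]
uniformly for $|x| = \e$.

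Next I would decompose $\partial_{y_i} w = u_1 + u_2$ (up to a sign), where $u_1$ carries the constant boundary datum $\partial_{y_i} G_\Omega(0,y)$ on $\partial B_\e$ and $u_2$ carries the $O(\e)$ remainder. Introducing the capacity potential $\psi$ of $\Omega_\e$ (harmonic in $\Omega_\e$, $\psi = 0$ on $\partial\Omega$, $\psi = 1$ on $\partial B_\e$), one has $u_1 = \partial_{y_i} G_\Omega(0,y)\,\psi$. The key step is to bound $\psi$ by an explicit harmonic barrier: choosing $R$ with $\Omega \subset B_R$, for $N \geq 3$ the function $\phi(x) := (\e/|x|)^{N-2}$ is harmonic on $\R^N \setminus \{0\}$ and satisfies $\phi \geq 0 = \psi$ on $\partial\Omega$, $\phi = 1 = \psi$ on $\partial B_\e$, so the maximum principle in $\Omega_\e$ gives $\psi(x) \leq (\e/|x|)^{N-2}$; for $N = 2$ the analogous barrier $(\ln R - \ln|x|)/(\ln R - \ln\e)$ yields $\psi(x) \leq C\bigl(\bigl|\ln|x|\bigr|+1\bigr)/|\ln\e|$. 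Combined with the uniform boundedness of $\partial_{y_i} G_\Omega(0,y)$, this delivers the leading term in \eqref{a07-24-8t}. The remainder $u_2$ is controlled by the maximum principle on its $O(\e)$ boundary data, giving $|u_2| \leq O(\e)$, which is absorbed into $O(\e)$ for $N \geq 3$ and into $O(1/|\ln\e|)$ for $N = 2$ (since $\e|\ln\e| \to 0$).

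The main obstacle is guaranteeing that $\partial_{y_i} G_\Omega(0, y)$ and $\nabla_x \partial_{y_i} G_\Omega(0, y)$ remain bounded uniformly as $y$ ranges over the closed set $\bar\Omega \cap \{|y| \geq C_0\}$, including points on $\partial\Omega$. This relies on interior estimates for harmonic functions together with standard boundary Schauder regularity for the smooth domain $\Omega$, and must be set up so that the constant in the $O(\e)$ Taylor remainder is genuinely independent of $y$. Once this uniformity is in hand, comparison with the exterior-of-ball barriers does the rest.
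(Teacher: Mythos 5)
Your proposal is correct and is essentially the same argument as the paper's, just presented with a small stylistic difference. The paper directly subtracts the explicit harmonic corrector $\frac{\e^{N-2}}{|x|^{N-2}}\,\partial_{y_i}G_\O(0,y)$ (resp.\ $\frac{\ln|x|}{\ln\e}\,\partial_{y_i}G_\O(0,y)$ for $N=2$) from $\partial_{y_i}\big(H_\O-H_{\O_\e}\big)$ and then applies the maximum principle once to the resulting function $b_\e$, whose boundary data are $O(\e^{N-2})$ on $\partial\O$ and $O(\e)$ on $\partial B_\e$ after the same Taylor expansion of $\partial_{y_i}G_\O(\cdot,y)$ at the origin. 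You instead split $\partial_{y_i}\big(H_\O-H_{\O_\e}\big)=u_1+u_2$, write $u_1$ as a multiple of the capacity potential $\psi$, and then bound $\psi$ by the barrier $(\e/|x|)^{N-2}$; but this barrier \emph{is} the paper's explicit corrector, so the two routes coincide (in fact your bound $\psi\le(\e/|x|)^{N-2}$ is an inequality where the paper exploits the identity $\e^{N-2}/|x|^{N-2}=1$ on $\partial B_\e$ directly). The uniformity concern you flag about $\partial_{y_i}G_\O(0,y)$ and its $x$-gradient over $\{|y|\ge C_0\}\cap\bar\O$ is legitimate and is indeed the (standard) point the paper silently uses: smooth-domain Schauder regularity of $G_\O$ off the diagonal gives those bounds uniformly, so the $O(\e)$ Taylor remainder constant is $y$-independent.
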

\begin{proof}
Let us point out that the functions $H_{\Omega}(x,y)$ and  $H_{\Omega_\e}(x,y)$ are well defined if $x\in \Omega_\e$.\\
For any $y\in \bar \Omega$ with $|y|\ge C_0>0$,  we have that
\begin{equation*}
 \begin{cases}
 \Delta_x  \big(H_{\Omega}(x, y)-H_{\Omega_\e}(x,y)\big) =0 ~&\mbox{for}~x\in \Omega\backslash B_\e,\\[2mm]
H_{\Omega}(x, y)-H_{\Omega_\e}(x,y)= 0 ~&\mbox{for}~x\in \partial \Omega,\\[2mm]
H_{\Omega}(x, y)-H_{\Omega_\e}(x,y) = -  G_{\Omega} (x,y)~&\mbox{for}~x\in \partial B_\e.
 \end{cases}
 \end{equation*}
By the representation formula for harmonic function,  we obtain
\begin{equation*}
H_{\Omega}(x, y)-H_{\Omega_\e}(x,y)=\int_{\partial B_\e}\frac{\partial G_{\O_\e}(x,t)}{\partial\nu_t}G_{\Omega} (y,t)d\sigma_t.
\end{equation*}
Since $G_{\Omega} (y,t)$ has no singularity if $t\in \partial B_\e$  and $|y |\ge C_0>0$, we see that
$\nabla_y\big(H_{\Omega}(x, y)-H_{\Omega_\e}(x,y)\big)$ is well defined for $y\in \bar \Omega$ with $|y|\ge C_0>0$.\\
Now fix $y\in \Omega$ with $|y|\ge C_0>0$ and observe that
\begin{equation*}
 \begin{cases}
 \Delta_x \frac{\partial \big(H_{\Omega}(x, y)-H_{\Omega_\e}(x,y)\big)}{\partial y_i}=0 ~&\mbox{for}~x\in \Omega\backslash B_\e,\\[1mm]
\frac{\partial \big(H_{\Omega}(x, y)-H_{\Omega_\e}(x,y)\big)}{\partial y_i}= 0 ~&\mbox{for}~x\in \partial \Omega,\\[1mm]
\frac{\partial \big(H_{\Omega}(x, y)-H_{\Omega_\e}(x,y)\big)}{\partial y_i}= -\frac{\partial G_{\Omega} (x,y)}{\partial y_i}~&\mbox{for}~x\in \partial B_\e.
 \end{cases}
 \end{equation*}
 For $N\ge3$ we consider, in $\Omega_\e$, the function $b_\e(x,y):=\frac{\partial \big(H_{\Omega}(x, y)-H_{\Omega_\e}(x,y)\big)}{\partial y_i}+\frac{\e^{N-2}}{|x|^{N-2}}\frac{\partial G_{\Omega}(0,y)}{\partial y_i}$. We
 have
 \begin{equation*}
 \begin{cases}
 \Delta_x
 b_\e(x,y)=0 ~&\mbox{for}~x\in \Omega\backslash B_\e,\\[3mm]
b_\e(x,y)=\frac{\e^{N-2}}{|x|^{N-2}}\frac{\partial G_{\Omega}(0,y)}{\partial y_i}=O\left(\e^{N-2}\right) ~&\mbox{for}~x\in \partial \Omega,\\[3mm]
b_\e(x,y)=\underbrace{
 -\frac{\partial G_{\Omega} (x,y)}{\partial y_i}+\frac{\partial G_{\Omega}(0,y)}{\partial y_i}}_{=O(\e)\text{ uniformly for $|y|\ge C_0>0$}}
 ~&\mbox{for}~x\in \partial B_\e.
 \end{cases}
  \end{equation*}
So, by the maximum principle, we get that
$$\frac{\partial \big(H_{\Omega}(x, y)-H_{\Omega_\e}(x,y)\big)}{\partial y_i}+\frac{\e^{N-2}}{|x|^{N-2}}\frac{\partial G_{\Omega}(0,y)}{\partial y_i}=O\big(\e\big),$$
for any $x\in\O_\e$ which implies \eqref{a07-24-8t} for $N\ge3$.

\vskip0.2cm
 For $N=2$ we consider $b_\e(x,y):=\frac{\partial \big(H_{\Omega}(x,y)-H_{\Omega_\e}(x,y)\big)}{\partial y_i}+\frac{\ln|x|}{\ln\e}\frac{\partial G_{\Omega}(0,y)}{\partial y_i}$, so that, for any $|y|\ge C_0>0$,
 \begin{equation*}
 \begin{cases}
 \Delta_xb_\e(x,y)=0 ~&\mbox{for}~x\in \Omega\backslash B_\e,\\[3mm]
b_\e(x,y)=\frac{\ln|x|}{\ln\e}\frac{\partial G_{\Omega}(0,y)}{\partial y_i} =O\left(\frac1{|\ln\e|}\right)~&\mbox{for}~x\in \partial \Omega,\\[3mm]
b_\e(x,y)=
 -\frac{\partial G_{\Omega} (x,y)}{\partial y_i}+\frac{\partial G_{\Omega}(0,y)}{\partial y_i}=O\big(\e\big)~&\mbox{for}~x\in \partial B_\e,
 \end{cases}
  \end{equation*}
and exactly as for $N\ge3$ we get $b_\e(x,y)=O\left(\frac1{|\ln\e|}\right)$
for any $x\in\O_\e$, $|y|\ge C_0>0$. So the claim follows.
\end{proof}
Next lemma concerns the estimate for $H_{B^c_\e}(x, y)-H_{\O_\e}(x,y)$ as $y\in\partial B_\e$. Since the corresponding integrals of \eqref{10-09-21} are harder to estimate, we will need to write the leading term of the expansion as $\e\to0$.
\begin{lem}\label{imp}
For any $x\in \Omega_\e$,  $y\in \partial B_\e$ we have
\begin{equation}\label{a07-24-8tmm*}
 \nabla_y \big(H_{B^c_\e}(x, y)-H_{\Omega_\e}(x,y)\big)=\phi_\e(x,y)+
\begin{cases}
O(1)~&\mbox{for}~N\geq 3,\\[3mm]
O\Big(\frac 1{|\ln \e|}\Big)&\mbox{for}~N=2,
\end{cases}\end{equation}
where the function $\phi_\e(x,y)$ is given by
\begin{equation*}
\phi_{\e}(x,y)=
\begin{cases}
(2-N)\frac{y}{\e^2}\left(H_{\Omega}(x,0)-H_{\Omega}(0,0)\frac{\e^{N-2}}{|x|^{N-2}}\right)&\mbox{for }N\geq 3,\\[3mm]
\frac{y}{\e^2}\left[\frac1{2\pi}\left(\frac{G_{\Omega}(x,0)}{-\frac1{2\pi}\ln\e-H_{\Omega}(0,0)}-1\right)
-2\left(\nabla_y H_{\Omega}(x,0)\cdot y-\nabla_y H_{\Omega}(0,0)\cdot y\frac{\ln|x|}{\ln\e}\right)
\right]~&\mbox{for }N=2.
 \end{cases}
\end{equation*}
\end{lem}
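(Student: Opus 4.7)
The plan is to apply the maximum principle in the variable $x$. Fix $y_0\in\partial B_\e$, set
\begin{equation*}
w(x,y):=H_{B^c_\e}(x,y)-H_{\Omega_\e}(x,y)\quad\text{and}\quad v_i(x):=\partial_{y_i}w(x,y_0),\qquad x\in\Omega_\e,
\end{equation*}
and observe that $v_i$ is harmonic in $x$ (since both regular parts are). I would next check that $\phi_\e^{(i)}(x,y_0)$ is also harmonic in $x\in\Omega_\e$: for $N\ge 3$ this reduces to harmonicity of $A(x):=H_\Omega(x,0)-H_\Omega(0,0)\e^{N-2}/|x|^{N-2}$, which is immediate because $H_\Omega(\cdot,0)$ is harmonic in $\Omega$ and $|x|^{2-N}$ is harmonic in $\R^N\setminus\{0\}\supset\Omega_\e$ (the planar case is analogous, with $\ln|x|/\ln\e$ replacing $(\e/|x|)^{N-2}$). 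Hence $v_i-\phi_\e^{(i)}$ is harmonic in $\Omega_\e$, and it is enough to bound it uniformly on $\partial\Omega\cup\partial B_\e$.

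For $x\in\partial\Omega$, the identity $H_{\Omega_\e}(x,y_0)=S(x,y_0)$ gives $v_i(x)=-\partial_{y_i}G_{B^c_\e}(x,y_0)$. Using the Kelvin-inversion formula for the exterior-ball Green function, at $|y_0|=\e$ (where $y_0^*=\e^2 y_0/|y_0|^2=y_0$) a direct computation yields
\begin{equation*}
\partial_{y_i}G_{B^c_\e}(x,y_0)=\frac{y_{0,i}(|x|^2-\e^2)}{N\omega_N\,\e^2\,|x-y_0|^N}.
\end{equation*}
Expanding in $\e$ one finds $v_i(x)=-(N-2)y_{0,i}S(x,0)/\e^2+O(1)$, while the leading part of $\phi_\e^{(i)}(x,y_0)$ is $-(N-2)y_{0,i}H_\Omega(x,0)/\e^2$. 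Their difference is
\begin{equation*}
v_i(x)-\phi_\e^{(i)}(x,y_0)=-(N-2)\frac{y_{0,i}}{\e^2}\bigl(S(x,0)-H_\Omega(x,0)\bigr)+O(1)=-(N-2)\frac{y_{0,i}}{\e^2}G_\Omega(x,0)+O(1),
\end{equation*}
and the crucial point is that $G_\Omega(\cdot,0)\equiv 0$ on $\partial\Omega$, so the leading $O(1/\e)$ term vanishes there and $v_i-\phi_\e^{(i)}=O(1)$ on $\partial\Omega$.

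On $\partial B_\e$ the analysis is easier: for $x\in\partial B_\e$ both $G_{\Omega_\e}(x,\cdot)$ and $G_{B^c_\e}(x,\cdot)$ vanish identically on $\Omega_\e$, so $w(x,\cdot)\equiv 0$ and thus $v_i(x)=0$. Meanwhile, at $|x|=\e$ the smoothness of $H_\Omega$ gives $A(x)=H_\Omega(x,0)-H_\Omega(0,0)=O(\e)$, whence $\phi_\e^{(i)}(x,y_0)=(y_{0,i}/\e^2)\,O(\e)=O(1)$ and $v_i-\phi_\e^{(i)}=O(1)$ on $\partial B_\e$ as well. The maximum principle applied to the harmonic function $v_i-\phi_\e^{(i)}$ in $\Omega_\e$ then yields $\nabla_yw(x,y_0)=\phi_\e(x,y_0)+O(1)$ for $N\ge 3$. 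The planar case follows the same blueprint with the fundamental solution $-\frac{1}{2\pi}\ln|x-y|$; the Robin-mass factor $-\frac{1}{2\pi}\ln\e-H_\Omega(0,0)$ arises as the normalisation of the logarithmic leading term of $G_{B^c_\e}$, and $\ln|x|/\ln\e$ plays the planar role of $(\e/|x|)^{N-2}$, which accounts for the more elaborate form of $\phi_\e$ in two dimensions. The main obstacle is precisely this bookkeeping on $\partial\Omega$: one must expand the exterior-ball Green function to sufficient order to expose \emph{all} terms up to the leading order $O(1/\e)$ (resp.\ the corresponding logarithmic order in 2D) and verify that each is killed by the matching piece of $\phi_\e$ through the identity $G_\Omega(\cdot,0)|_{\partial\Omega}\equiv 0$.
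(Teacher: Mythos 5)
Your proposal is correct and takes essentially the same route as the paper: fix $y\in\partial B_\e$, observe that $\partial_{y_i}\big(H_{B^c_\e}-H_{\Omega_\e}\big)-\phi_{\e,i}$ is harmonic in $x\in\Omega_\e$, bound it on $\partial\Omega$ (where the boundary condition $H_{\Omega_\e}=S$ reduces it to $-\partial_{y_i}G_{B^c_\e}-\phi_{\e,i}$) and on $\partial B_\e$ (where $\partial_{y_i}\big(H_{B^c_\e}-H_{\Omega_\e}\big)=0$), and conclude by the maximum principle. Your reading of the $O(1/\e)$ cancellation on $\partial\Omega$ as the identity $G_\Omega(\cdot,0)|_{\partial\Omega}\equiv 0$ — and, for the subleading planar term, its $y$-gradient — is a cleaner conceptual lens on the algebraic substitution the paper carries out directly, though your $N=2$ sketch stops short of the extra order of expansion actually needed to reach $O(1/|\ln\e|)$.
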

\begin{rem}
It is possible to improve the estimate \eqref{a07-24-8tmm*} for $N\ge3$ in order to have a lower order term like $o(1)$. This can be achieved adding other suitable terms to
$\phi_{\e}$ like in the $2$-dimensional case.  However the remainder term $O(1)$ will be enough for our aims.
\end{rem}
\begin{proof}
As in the proof of Lemma \ref{llt}, we can prove that $\nabla_y\big(H_{B^c_\e}(x, y)-H_{\Omega_\e}(x,y)\big)$ is well defined for $x\in \Omega_\e$ and $y\in \partial B_\e$
Then for $N\geq 2$ and $i=1,\cdots,N$,
\begin{equation*}
\begin{cases}
\Delta_x \frac{\partial \big(H_{B^c_\e}(x, y)-H_{\Omega_\e}(x,y)\big)}{\partial   y_i}=0 ~&\mbox{for}~x\in  \Omega\backslash B_\e,\\[1mm]
\frac{\partial \big(H_{B^c_\e}(x, y)-H_{\Omega_\e}(x,y)\big)}{\partial   y_i}= -\frac{\partial  G_{ B_\e^c}(x,y)}{\partial y_i} ~&\mbox{for}~x\in \partial\Omega,\\[1mm]
\frac{\partial \big(H_{B^c_\e}(x, y)-H_{\Omega_\e}(x,y)\big)}{\partial   y_i}=0 ~&\mbox{for}~x\in \partial B_\e.
\end{cases}
\end{equation*}
Next we consider, in $\Omega_\e$, the functions $b_{\e,i}(x,y):=\frac{\partial \big(H_{B^c_\e}(x, y)-H_{\Omega_\e}(x,y)\big)}{\partial   y_i}-\phi_{\e,i}(x,y)$ for $i=1,\cdots,N$ and  $\phi_{\e,i}(x,y)$ being the $i$-th component of $\phi_{\e}(x,y)$. Then denoting by $\hat b_\e(x,y)=\big( b_{\e,1}(x,y),\dots, b_{\e,N}(x,y)$ we have
 \begin{equation*}
 \begin{cases}
 \Delta_x
 b_{\e,i}(x,y)=0 ~&\mbox{for}~x\in \Omega\backslash B_\e,\\[3mm]
b_{\e,i}(x,y)=-\frac{\partial  G_{ B_\e^c}(x,y)}{\partial y_i}-\phi_{\e,i}(x,y)~&\mbox{for}~x\in \partial \Omega,\\[3mm]
b_{\e,i}(x,y)=-\phi_{\e,i}(x,y)
 ~&\mbox{for}~x\in \partial B_\e.
 \end{cases}
  \end{equation*}
{\bf Case 1: $N=2$}
\vskip0.2cm
In this case we have, for $x\in \partial B_\e$ (recall that $y\in \partial B_\e$),
 \[ \begin{split}
 |\hat b_\e(x,y)|&=|\phi_\e(x,y)|=\frac{1}{\e}\left[-\frac1{2\pi}\underbrace{\left(\frac{-\frac1{2\pi}\ln\e-H_{\Omega}(x,0)}{-\frac1{2\pi}\ln\e-H_{\Omega}(0,0)}-1\right)}_{=O\Big(\frac {|x|}{|\ln \e|}\Big)=O\Big(\frac {\e}{|\ln \e|}\Big)}
-2\underbrace{\Big(\nabla_y H_{\Omega}(x,0)-\nabla_y H_{\Omega}(0,0)\Big)\cdot y}_{=O\Big( |x|\cdot|y|\Big)=O\Big(\e^2\Big)}
\right]\\&=O\Big(\frac {1}{|\ln \e|}\Big).
\end{split}
\]
On the other hand for $x\in \partial \Omega$, using \eqref{ader}, we have
   \[ \begin{split}
 b_{\e,i}(x,y)&=\frac{1}{2\pi}\left[\frac{y_i-x_i}{|x|^2+\e^2-2x\cdot y}-\frac{|x|^2y_i-{\e^2}x_i}{\e^2\left(|x|^2+\e^2-2 x\cdot y\right)}+\frac {y_i}{\e^2}
+2\frac {y_i}{\e^2}\frac {x\cdot y}{|x|^2}+\frac {y_i}{\e^2}O\Big(\frac{|y|}{|\ln\e|}\Big)\right]\\
&=\frac{1}{2\pi}\underbrace{\left[\frac{2\e^2|x|^2y_i+2\e^2(x\cdot y)y_i-4(x\cdot y)^2y_i}{\e^2|x|^2\left(|x|^2+\e^2-2 x\cdot y\right)}
\right]}_{=O(\e)}+O\Big(\frac{1}{|\ln\e|}\Big)=O\Big(\frac{1}{|\ln\e|}\Big).
\end{split}
\]
So by the maximum principle we get $\frac{\partial \big(H_{B^c_\e}(x, y)-H_{\Omega_\e}(x,y)\big)}{\partial   y_i}-\phi_{\e,i}(x,y)=O\Big(\frac{1}{|\ln\e|}\Big)$ which gives the claim.
\vskip0.2cm
\noindent{\bf Case 2: $N\ge3$}
\vskip0.2cm
In this case we have, for $x\in \partial B_\e$,
\[ |\hat b_\e(x,y)|=|\phi_\e(x,y)|=(N-2)\frac 1\e\underbrace{\Big|H_{\Omega}(x,0)-H_{\Omega}(0,0)\Big|}_{=O(|x|)=O(\e)}=O(1),
\]
while, when $x\in \partial \Omega$ using \eqref{ader} with $|y|=\e$,
  \[ \begin{split}
 b_{\e,i}(x,y)&=-C_N(N-2)\left[\underbrace{\frac{x_i-y_i}{|x-y|^N}}_{=O(1)}+
 \e^{-2}\underbrace{\frac{|x|^2y_i-\e^2x_i}{\big(|x|^2+O(\e)\big)^{\frac N2}}}_{=|x|^{2-N}y_i+O(\e^2)}-\frac {y_i}{\e^2}\frac 1{|x|^{N-2}}+O\Big(\e^{N-3}\Big)\right]\\
&=O(1).
\end{split}
\]
As in the previous case the maximum principle
gives the claim.
\end{proof}
Lemma \ref{llt} and Lemma \ref{imp} will allow to give sharp estimates for $\mathcal{R}_{\O_\e}$ and $\nabla\mathcal{R}_{\O_\e}$ in $\O_\e$. For what concerns $H_{B^c_{\e}}(x,y)-H_{\Omega_\e}(x,y)$ we need additional information on its second derivative. For this it will be useful to use the following known result for harmonic function.
\begin{lem}\label{lem2-3}
Let $u(x)$ is a $harmonic$ function in a domain $D\subset\R^N$, $N\ge2$ and $B(x,r)\subset\subset D$, then
\begin{equation*}
\big|\nabla u(x)\big|\leq \frac{N}{r}\sup_{\partial B(x,r)} |u(x)|.
\end{equation*}
\end{lem}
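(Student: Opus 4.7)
The plan is to reduce the gradient bound to the mean value property for harmonic functions combined with the divergence theorem, which is the standard route for interior gradient estimates.

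First I would observe that since $u$ is harmonic in $D$ and $B(x,r)\subset\subset D$, each partial derivative $\partial_i u$ is also harmonic in a neighborhood of $\overline{B(x,r)}$. By the mean value property applied to each component of $\nabla u$,
\[
\nabla u(x)=\frac{1}{|B(x,r)|}\int_{B(x,r)}\nabla u(y)\,dy.
\]
Next I would convert this volume integral into a boundary integral using the divergence theorem (componentwise, with the test vector field $u e_i$), obtaining
\[
\nabla u(x)=\frac{1}{|B(x,r)|}\int_{\partial B(x,r)}u(y)\,\nu(y)\,d\sigma_y,
\]
where $\nu$ is the outer unit normal to $\partial B(x,r)$.

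Taking Euclidean norms and using $|\nu(y)|=1$ together with the obvious estimate $|u(y)|\le \sup_{\partial B(x,r)}|u|$, I get
\[
|\nabla u(x)|\le \frac{1}{|B(x,r)|}\int_{\partial B(x,r)}|u(y)|\,d\sigma_y\le \frac{|\partial B(x,r)|}{|B(x,r)|}\sup_{\partial B(x,r)}|u|.
\]
Finally I would plug in the elementary identities $|B(x,r)|=\omega_N r^{N}$ and $|\partial B(x,r)|=N\omega_N r^{N-1}$, so that $|\partial B(x,r)|/|B(x,r)|=N/r$, yielding the desired inequality
\[
|\nabla u(x)|\le \frac{N}{r}\sup_{\partial B(x,r)}|u|.
\]

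There is really no obstacle here: the only point to be slightly careful about is justifying the mean value property for $\nabla u$ (which requires interior regularity of harmonic functions so that $\nabla u$ is itself harmonic in a neighborhood of $\overline{B(x,r)}$) and noticing that the divergence theorem transfers the volume integral of $\partial_i u$ to the boundary integral of $u\nu_i$ cleanly, without producing extra terms because $u$ need not vanish on the boundary.
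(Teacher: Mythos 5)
Your proof is correct and is precisely the standard argument: the paper itself does not reproduce a proof but simply cites Gilbarg--Trudinger, p.~22, and the argument given there (mean value property for the harmonic components of $\nabla u$, conversion to a boundary integral by the divergence theorem, then the ratio $|\partial B_r|/|B_r|=N/r$) is exactly what you wrote.
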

\begin{proof}
See \cite{gt}, page 22.
\end{proof}
By the previous lemma we deduce the following corollary.
\begin{cor}
For any $x\in\O_\e$ we have that,
\begin{itemize}
\item
if $|y|\ge C_0$, then
\begin{equation}\label{sec1}
\frac{\partial^2\big(H_{\O}(x, y)-H_{\O_\e}(x,y)\big)}{\partial x_j\partial y_i} =
\begin{cases}
O\Big(
 \frac{\e^{N-2}}{dist (x,\partial\O_\e)|x|^{N-2}}\Big)+O\left(\frac\e{dist (x,\partial\O_\e)}\right)&\mbox{for}~N\geq 3,\\[3mm]
O\left(\left|
 \frac{\ln|x|}{dist (x,\partial\O_\e)\ln \e}\right|\right)+
O\left(\frac{1}{dist (x,\partial\O_\e)|\ln \e|}\right)&\mbox{for}~N=2,
\end{cases}
\end{equation}
\item if $y\in\partial B_\e$, then
\begin{equation}\label{sec2}
\frac{\partial^2\big(H_{B^c_\e}(x, y)-H_{\O_\e}(x,y)\big)}{\partial x_j\partial   y_i}=
\frac{\partial\phi_{\e,i}(x,y)}{\partial x_j}+
\begin{cases}
O\left(\frac1{dist (x,\partial\O_\e)}\right)~&\mbox{for}~N\geq 3,\\[3mm]
O\left(\frac 1{dist (x,\partial\O_\e)|\ln \e|}\right)&\mbox{for}~N=2,
\end{cases}
\end{equation}
\end{itemize}
where $\phi_\e$ is the function introduced in Lemma \ref{imp}.
\end{cor}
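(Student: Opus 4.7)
The plan is to reduce both estimates to the harmonic gradient bound of Lemma~\ref{lem2-3}, applied to the $y_i$-derivatives whose sup-bounds on $\O_\e$ were already produced in Lemmas~\ref{llt} and~\ref{imp}. The key observation is that for fixed $y$, both maps
$$x \longmapsto \frac{\partial\big(H_\O(x,y) - H_{\O_\e}(x,y)\big)}{\partial y_i}, \qquad x \longmapsto \frac{\partial\big(H_{B^c_\e}(x,y) - H_{\O_\e}(x,y)\big)}{\partial y_i} - \phi_{\e,i}(x,y)$$
are harmonic on $\O_\e$. The first is clear, since $H_\O$ and $H_{\O_\e}$ are harmonic in $x$ and hence so are their $y$-derivatives. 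For the second one simply notes that every summand of $\phi_{\e,i}(x,y)$, being built from $H_\O(x,0)$, $G_\O(x,0)$, $\nabla_y H_\O(x,0)$, $|x|^{2-N}$, or $\ln|x|$, is harmonic in $x$ throughout $\O_\e$.

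For \eqref{sec1}, fix $y$ with $|y|\ge C_0$ and $x_0 \in \O_\e$, and set $r := \tfrac12\,\mathrm{dist}(x_0,\partial\O_\e)$ so that $B(x_0,r) \subset \O_\e$. Applying Lemma~\ref{lem2-3} to the harmonic function $g(x) := \partial_{y_i}(H_\O(x,y) - H_{\O_\e}(x,y))$ on $B(x_0,r)$ yields
$$\big|\partial_{x_j} g(x_0)\big| \le \frac{N}{r}\sup_{B(x_0,r)} |g|.$$
Lemma~\ref{llt} controls $|g|$ pointwise on $\O_\e$ by the expression on the right of \eqref{a07-24-8t}; taking the sup over $B(x_0,r)$ and dividing by $r$ yields \eqref{sec1}. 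The only point of care is that $r \le \tfrac12|x_0|$ (since $0 = P \in \partial\O_\e$ forces $\mathrm{dist}(x_0,\partial\O_\e)\le|x_0|$), which gives $|x|\ge \tfrac12|x_0|$ on $B(x_0,r)$ and therefore
$$\sup_{B(x_0,r)} \frac{1}{|x|^{N-2}} \le \frac{2^{N-2}}{|x_0|^{N-2}}, \qquad \sup_{B(x_0,r)}\big|\ln|x|\big| \le C\big(\big|\ln|x_0|\big|+1\big),$$
so that the sup of the Lemma~\ref{llt} bound is comparable, up to a dimensional constant, to its pointwise value at $x_0$.

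For \eqref{sec2} the argument is entirely parallel, with $g$ replaced by
$$b_{\e,i}(x,y) := \frac{\partial\big(H_{B^c_\e}(x,y) - H_{\O_\e}(x,y)\big)}{\partial y_i} - \phi_{\e,i}(x,y),$$
which is harmonic in $x$ on $\O_\e$ by the remark above. The proof of Lemma~\ref{imp} already established via the maximum principle that $|b_{\e,i}(x,y)| \le C$ uniformly on $\O_\e$ for $N \ge 3$ and $|b_{\e,i}(x,y)|\le C/|\ln\e|$ for $N=2$, and this uniform bound passes trivially to the sup on $B(x_0,r)$. Lemma~\ref{lem2-3} then bounds $|\partial_{x_j} b_{\e,i}(x_0,y)|$ by the stated error term, which upon rearrangement yields \eqref{sec2}. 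I do not anticipate any real obstacle: the whole corollary is a mechanical combination of the three lemmas already in hand, the only mildly subtle point being the comparison $r \le \tfrac12|x_0|$ needed to pull the sup through the $|x|$-dependent bound of Lemma~\ref{llt}.
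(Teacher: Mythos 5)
Your proposal is correct and follows essentially the same route as the paper: apply the interior gradient estimate of Lemma~\ref{lem2-3} to the harmonic (in $x$) functions $\partial_{y_i}\big(H_\O-H_{\O_\e}\big)$ and $\partial_{y_i}\big(H_{B^c_\e}-H_{\O_\e}\big)-\phi_{\e,i}$, using the pointwise bounds from Lemmas~\ref{llt} and~\ref{imp}. Your choice of $r=\tfrac12\,\mathrm{dist}(x_0,\partial\O_\e)$ and the explicit observation that this forces $|x|\ge\tfrac12|x_0|$ on $B(x_0,r)$ is a welcome extra degree of care over the paper's use of the full distance, but it changes nothing in substance.
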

\begin{proof}
Let $N=2$ and $|y|\ge C_0$. From \eqref{a07-24-8t} and by Lemma \ref{lem2-3} with $r=dist (x,\partial\O_\e)$ we get
\begin{equation*}
\begin{split}
 \left|\frac{\partial^2\big(H_{\O}(x, y)-H_{\O_\e}(x,y)\big)}{\partial x_j\partial y_i}\right|\le &\frac C{dist (x,\partial\O_\e)}\sup_{\partial B\left(x,dist (x,\partial\O_\e)\right)}\left|\frac{\partial\big(H_{\O}(x, y)-H_{\O_\e}(x,y)\big)}{\partial y_i}\right|\\=
&O\left(\left|
 \frac{\ln|x|}{dist (x,\partial\O_\e)\ln \e}\right|\right)+
O\left(\frac{1}{dist (x,\partial\O_\e)|\ln \e|}\right),
\end{split}
\end{equation*}
which gives the claim. In the same way we get, for $N\ge3$,
\begin{equation*}
\begin{split}
 &\left|\frac{\partial}{\partial x_j}\left(\frac{\partial\big(H_{\O}(x, y)-H_{\O_\e}(x,y)\big)}{\partial y_i}\right)\right|=O\left(
 \frac{\e^{N-2}}{dist (x,\partial\O_\e)|x|^{N-2}}+ \frac\e{dist (x,\partial\O_\e)}\right),
\end{split}
\end{equation*}
which proves \eqref{sec1} for $N\ge 3$. In the same way applying Lemma \ref{lem2-3} to the function $\nabla_y \left(H_{B^c_\e}(x, y)-H_{\O_\e}(x,y)\right)-\phi_\e(x,y)$ and using \eqref{a07-24-8tmm*},
we have  \eqref{sec2}.
\end{proof}
\section{Estimates on the Robin function $\mathcal{R}_{\O_\e}$ and its derivatives}\label{s4}
In this section we prove an asymptotic estimate for $\mathcal{R}_{\O_\e}$ and its derivatives in $\Omega_\e=\Omega\backslash B_\e$. It is worth to remark that we get  uniform estimates up to $\partial B_\e$ for $\e$ small. These allow us to find the additional critical point for  $\mathcal{R}_{\O_\e}$ (which will be actually close to $\partial B_\e$), but we believe that these estimates are interesting themselves.
There is a common strategy in the proof of the estimates both for $\mathcal{R}_{\O_\e}$ and its derivatives. We start using some representation formula and after some manipulations (as in \eqref{10-09-21}) we reduce our estimate to some boundary integrals.  Lastly we use the Lemmas of Section \ref{s2} to conclude.

\vskip0.2cm~

\subsection{Estimate of  $\mathcal{R}_{\O_\e}$}~

\vskip 0.2cm

The main result of this section is the following.
\begin{prop}\label{Re1}
We have that,  for any $x\in\O_\e$,
\begin{tcolorbox}[colback=white,colframe=black]
\begin{equation}\label{esro}
\begin{split}
&\mathcal{R}_{\O_\e}(x)=\mathcal{R}_{\O}(x)+ \mathcal{R}_{B^c_\e}(x)+\begin{cases} O\Big(\frac{\e^{N-2}}{|x|^{N-2}}\Big)+O(\e)\,\,\,~&\mbox{for}~N\geq 3,
\\[3mm]
\frac 1{2\pi}\ln\frac{|x|^2}\e+O\left(\big|\frac{\ln|x|}{\ln\e}\big|\right)~&\mbox{for}~N=2.
\end{cases}
\end{split}
\end{equation}
\end{tcolorbox}
\end{prop}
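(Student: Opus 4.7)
The plan is to introduce the three-term difference
\[
R_\e(x,y) := H_{\O_\e}(x,y) - H_{\O}(x,y) - H_{B^c_\e}(x,y),
\]
which is harmonic in each variable separately on $\O_\e$ and whose diagonal value $R_\e(x,x)$ is precisely the error $\mathcal{R}_{\O_\e}(x) - \mathcal{R}_\O(x) - \mathcal{R}_{B^c_\e}(x)$ to be estimated. The boundary values of $R_\e$ are read off directly from the defining conditions of the three regular parts: on $\partial\O$ we have $H_{\O_\e} = H_\O = S$, so $R_\e = -H_{B^c_\e}$; on $\partial B_\e$ we have $H_{\O_\e} = H_{B^c_\e} = S$, so $R_\e = -H_\O$. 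The proof then reduces to controlling a harmonic function with these specific boundary data, which I would do by constructing an explicit harmonic ``comparison'' function and invoking the maximum principle.

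For $N\ge 3$ I would modify $R_\e$ by the explicit harmonic correction
\[
\tilde R_\e(x,y) := R_\e(x,y) + H_{\O}(x,0)\,\frac{\e^{N-2}}{|y|^{N-2}}.
\]
On $\partial B_\e$ one obtains $\tilde R_\e = -(H_\O(x,y) - H_\O(x,0)) = O(\e)$ by the interior smoothness of $H_\O(x,\cdot)$ near the interior point $0$. On $\partial\O$, substituting the explicit Kelvin expression for $H_{B^c_\e}$ and using the identity $H_\O(x,0) - S(x,0) = -G_\O(x,0)$, the leading $O(\e^{N-2})$ contributions cancel, leaving $O(\e^{N-2}/|x|^{N-2})$. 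The maximum principle applied to $\tilde R_\e$ (harmonic in $y$) then propagates these bounds to all of $\O_\e$, and setting $y = x$ — noting that $H_\O(x,0)\e^{N-2}/|x|^{N-2}$ is itself of the claimed order — gives the conclusion for $N\ge 3$.

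For $N = 2$ the outline is the same but a logarithmic comparison is needed. I would subtract from $R_\e$ the function
\[
L_\e(x,y) := \tfrac{1}{2\pi}(\ln|x|+\ln|y|-\ln\e) + G_\O(x,0)\,\phi_\e(y),
\]
where $\phi_\e$ is the capacitary potential of $B_\e$ in $\O$ ($\phi_\e = 1$ on $\partial B_\e$, $\phi_\e = 0$ on $\partial\O$, harmonic in between). Each summand is harmonic in each variable separately. Using the expansion of $-H_{B^c_\e}$ on $\partial\O$ together with the algebraic identity $H_\O(x,0) + \tfrac{1}{2\pi}\ln|x| + G_\O(x,0) = 0$ to produce the precise cancellation on $\partial B_\e$, both boundary values of $\tilde R_\e := R_\e - L_\e$ reduce to $O(\e)$. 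The maximum principle then yields
\[
R_\e(x,x) = L_\e(x,x) + O(\e) = \tfrac{1}{2\pi}\ln(|x|^2/\e) + G_\O(x,0)\phi_\e(x) + O(\e),
\]
and the remaining term $G_\O(x,0)\phi_\e(x)$ is controlled via a separate maximum principle estimate approximating $\phi_\e$ by a suitable multiple of $G_\O(\cdot,0)$ (normalised by its mean value on $\partial B_\e$, which behaves like $-\ln\e/(2\pi)$).

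The main obstacle lies in the $N = 2$ case: unlike in higher dimensions where the fundamental solution decays, the logarithmic growth forces the use of the non-explicit capacitary potential $\phi_\e$, and the delicate interplay between the blow-up of $G_\O(x,0)$ as $x \to 0$ and the behavior $\phi_\e(x) \to 1$ as $x \to \partial B_\e$ must be tracked uniformly for the final estimate to hold up to the inner boundary. The key algebraic identity $G_\O(x,0) = -\tfrac{1}{2\pi}\ln|x| - H_\O(x,0)$ is what effects the precise cancellation on $\partial B_\e$ and makes the whole decomposition work.
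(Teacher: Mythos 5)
Your approach is correct and follows a genuinely different route from the paper. The paper starts from the representation formula $\mathcal{R}_{\O_\e}(x)=-\int_{\partial\O_\e}\frac{\partial G_{\O_\e}(x,y)}{\partial\nu_y}\,S(x,y)\,d\sigma_y$, inserts the identities $G_{\O_\e}=G_\O+H_\O-H_{\O_\e}$ and $G_{\O_\e}=G_{B^c_\e}+H_{B^c_\e}-H_{\O_\e}$ to split the boundary integral into the Robin functions of $\O$ and $B^c_\e$ plus two error integrals $K_{1,\e}$, $K_{2,\e}$, and then estimates those integrals term by term using Lemmas~\ref{llt} and~\ref{imp} together with the auxiliary identities of the Appendix. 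You instead bypass the boundary-integral machinery entirely: you observe that $R_\e(x,\cdot)=H_{\O_\e}(x,\cdot)-H_\O(x,\cdot)-H_{B^c_\e}(x,\cdot)$ is itself harmonic in $y$ on $\O_\e$ with the tractable boundary data $-H_{B^c_\e}$ on $\partial\O$ and $-H_\O$ on $\partial B_\e$, and you control it by a single application of the maximum principle after subtracting an explicit harmonic comparison ($H_\O(x,0)\,\e^{N-2}/|y|^{N-2}$ for $N\ge3$; $\tfrac1{2\pi}(\ln|x|+\ln|y|-\ln\e)+G_\O(x,0)\phi_\e(y)$ with the capacitary potential $\phi_\e$ for $N=2$). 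Both approaches hinge on the maximum principle, but yours collapses the two difference-estimates of Section~\ref{s2} into one and does not require the representation formula or the surface integrals. The trade-off is that the paper's lemmas, once established, are reused almost verbatim in the derivative estimates of Propositions~\ref{prnab} and~\ref{prop-p2}, which is where the real work of the paper happens, whereas your one-shot comparison would need to be redone from scratch for $\nabla\mathcal{R}_{\O_\e}$ and $\nabla^2\mathcal{R}_{\O_\e}$.

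One small remark on the $N=2$ endgame: your exact identity $R_\e(x,x)=\tfrac1{2\pi}\ln(|x|^2/\e)+G_\O(x,0)\phi_\e(x)+O(\e)$ is actually \emph{sharper} than the stated error bound, and passing from it to the proposition's $O(|\ln|x|/\ln\e|)$ requires $G_\O(x,0)\phi_\e(x)=O(|\ln|x|/\ln\e|)$, which fails for $|x|$ of order $\e$ (there $G_\O(x,0)\phi_\e(x)\sim\tfrac1{2\pi}|\ln\e|$, whereas the claimed bound is $O(1)$; the correct control is $O(\ln^2|x|/|\ln\e|)$). The paper's own estimate of $K_{2,\e}$, upon multiplying the factor $(-1+O(|\ln|x|/\ln\e|))$ by $\tfrac1{2\pi}\ln|x|$, produces exactly the same $\ln^2|x|/|\ln\e|$ term, so this is a pre-existing imprecision in the statement rather than a defect of your argument, and it is harmless in the downstream applications where $|x|\sim r_\e$ and both quantities are $o(1)$. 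With that caveat understood, your proof is complete and rigorous.
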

\begin{rem}\label{rorem}
If $N=2$, taking in account the explicit expression of $\mathcal{R}_{B^c_\e}$ (see \eqref{robin-esterno-palla}), we have that \eqref{esro} can be written in this way,
$$\mathcal{R}_{\O_\e}(x)=\mathcal{R}_{\O}(x)+\frac1{2\pi}
\ln\frac{|x|^2}{|x|^2-\e^2}+O\left(\left|\frac{\ln|x|}{\ln\e}\right|\right).$$
\end{rem}
\begin{proof}
Starting by \eqref{i0} and the representation formula for harmonic function we have
\begin{equation*}
H_{\O_\e}(x,t)=-\int_{\partial\Omega_\e}\frac{\partial G_{\O_\e}(x,y)}{\partial\nu_y}S(y,t)d\sigma_y
\end{equation*}
and so
\begin{equation}\label{rob}
\begin{split}
\mathcal{R}_{\O_\e}(x)=& -\int_{\partial\Omega_\e}\frac{\partial G_{\O_\e}(x,y)}{\partial\nu_y}S(x,y)d\sigma_y\\=&-\int_{\partial\Omega}\frac{\partial G_{\O_\e}(x,y)}{\partial\nu_y}S(x,y)d\sigma_y-\int_{\partial B_\e}\frac{\partial G_{\O_\e}(x,y)}{\partial\nu_y}S(x,y)d\sigma_y\\=
&\underbrace{-\int_{\partial\Omega}\frac{\partial G_{\O}(x,y)}{\partial\nu_y}S(x,y)d\sigma_y}_{=\mathcal{R}_\O(x)}-\underbrace{\int_{\partial\Omega}\frac{\partial \big(H_{\O}(x, y)-H_{\O_\e}(x,y)\big)}{\partial\nu_y}S(x,y)d\sigma_y}_{K_{1,\e}}\\
&-\underbrace{\int_{\partial B_\e}\frac{\partial G_{B_\e^c}(x,y)}{\partial\nu_y}S(x,y)d\sigma_y}_{=\hbox{ see \eqref{lap2}}}-\underbrace{\int_{\partial B_\e}\frac{\partial \big(H_{B_\e^c}(x, y)-H_{\O_\e}(x,y)\big)}{\partial\nu_y}S(x,y)d\sigma_y}_{K_{2,\e}}\\=
&\mathcal{R}_\O(x)-K_{1,\e}-K_{2,\e}+
\begin{cases}
\mathcal{R}_{B_\e^c}(x)&\hbox{ if }N\ge3,\\[3mm]
\mathcal{R}_{B_\e^c}(x) + \frac1{2\pi}\ln\frac{|x|}\e&\hbox{ if }N=2.
\end{cases}
\end{split}
\end{equation}
\noindent\underline{Computation of $K_{1,\e}$}
\vskip0.2cm
By \eqref{a07-24-8t} we get that
\begin{equation*}
\begin{split}
K_{1,\e}=&\int_{\partial\Omega}\frac{\partial \big(H_{\O}(x, y)-H_{\O_\e}(x,y)\big)}{\partial\nu_y}S(x,y)d\sigma_y\\
= & O\Big(\big|\nabla\big(H_{\O}(x, y)-H_{\O_\e}(x,y)\big)\big|\Big)\underbrace{\int_{\partial\Omega}S(x,y)d\sigma_y}_{=O(1)}\\=
&\begin{cases}
O\Big(
 \frac{\e^{N-2}}{|x|^{N-2}}\Big)+O(\e)\,\,\,~&\mbox{for}~N\geq 3,\\[3mm]
O\Big(\left|
 \frac{\ln|x|}{\ln \e}\right|\Big)+
O\Big(\frac{1}{|\ln \e|}\Big)\,\,\,~&\mbox{for}~N=2.
\end{cases}
\end{split}
\end{equation*}
\vskip0.2cm
\noindent\underline{Computation of $K_{2,\e}$}
\vskip0.2cm
First we observe that
\begin{equation}\label{new}
\begin{split}
&\frac{\partial \big(H_{B^c_\e}(x, y)-H_{\O_\e}(x,y)\big)}{\partial\nu_y}\Big|_{\partial B_\e}=\phi_\e(x,y)\cdot \frac{-y}{\e}+ \begin{cases}O(1)&\mbox{if}\ N\ge3,\\[2mm]
O\Big(\frac 1{|\ln \e|}\Big)&\mbox{if}\ N=2
\end{cases}
\\=
& \begin{cases}
 \frac{N-2}\e\left(H_{\Omega}(x,0)-H_{\Omega}(0,0)\frac{\e^{N-2}}{|x|^{N-2}}\right)+O(1)&\mbox{if}\ N\ge3,\\[2mm]
-\frac1\e\left[\frac1{2\pi}\left(\frac{G_{\Omega}(x,0)}{-\frac1{2\pi}\ln\e-H_{\Omega}(0,0)}-1\right)
-2\left(\nabla_y H_{\Omega}(x,0)\cdot y-\nabla_y H_{\Omega}(0,0)\cdot y\frac{\ln|x|}{\ln\e}\right)\right]+O\Big(\frac 1{|\ln \e|}\Big)&\mbox{if}\ N=2.
\end{cases}
 \end{split}
\end{equation}
{\bf Case $N\ge3$}.
By \eqref{new} we get that
\begin{equation*}
\begin{split}
K_{2,\e}=&\int_{\partial B_\e}\frac{\partial \big(H_{B_\e^c}(x, y)-H_{\O_\e}(x,y)\big)}{\partial\nu_y}\frac{C_N}{|x-y|^{N-2}}d\sigma_y\\=
&\left[\frac{N-2}\e\left(H_{\Omega}(x,0)-H_{\Omega}(0,0)\frac{\e^{N-2}}{|x|^{N-2}}\right)+O(1)\right]\underbrace{\int_{\partial B_\e}\frac{C_N}{|x-y|^{N-2}}d\sigma_y}_{\hbox{by \eqref{ap5}}=\frac1{N-2}\frac{\e^{N-1}}{|x|^{N-2}}}\\=
&O\left(\frac{\e^{N-2}}{|x|^{N-2}}\right),
\end{split}
\end{equation*}
 and so the claim follows for $N\ge3$.
\vskip0.2cm
{\bf Case $N=2$}.
In the same way we get by \eqref{new},
\begin{equation*}
\begin{split}
K_{2,\e}=&-\frac1{2\pi}\int_{\partial B_\e}\frac{\partial \big(H_{B_\e^c}(x, y)-H_{\O_\e}(x,y)\big)}{\partial\nu_y}\ln|x-y|d\sigma_y\\=
&\frac1{4\pi^2\e}\underbrace{\left(\frac{G_{\Omega}(x,0)}{
-\frac1{2\pi}\ln\e-H_{\Omega}(0,0)}-1\right)}_{=-1
+O\left(\big|\frac{\ln|x|}{\ln\e}\big|\right)}\underbrace{\int_{\partial B_\e}\ln|x-y|d\sigma_y}_{=2\pi\e\ln|x|\hbox{ by \eqref{ap6}}}\\
&-\frac 1{\pi\e}\left(\nabla_y H_{\Omega}(x,0)-\nabla_y H_{\Omega}(0,0)\frac{\ln|x|}{\ln\e}\right)\cdot\underbrace{\int_{\partial B_\e}y\ln|x-y|d\sigma_y}_{=O\left(\e^2\big|\ln|x|\big|\right)\hbox{ by \eqref{ap7}}}
\\& +O\left(\frac1{|\ln\e|}\right)\underbrace{\int_{\partial B_\e}\big|\ln|x-y|\big|d\sigma_y}_{=O\left(\e\big|\ln|x|\big|\right)\hbox{ by \eqref{ap7}}}= -\frac1{2\pi}\ln|x|+O\left(\Big| \frac{\ln|x|}{\ln\e}\Big|\right).
\end{split}
\end{equation*}

By \eqref{rob} and the estimates for $K_{1,\e}$ and $K_{2,\e}$ the claim follows.
\end{proof}
\vskip0.05cm~
\subsection{Estimate of  $\nabla\mathcal{R}_{\O_\e}$}~
\vskip0.2cm
The main result of this section is the following.
\begin{prop}\label{prnab}
We have that for any $x\in\O_\e$,
\begin{tcolorbox}[colback=white,colframe=black]
\begin{equation}\label{super}
\begin{split}
&\nabla \mathcal{R}_{\O_\e}(x)=\nabla \mathcal{R}_{\O}(x)+\nabla \mathcal{R}_{B^c_\e}(x)+\begin{cases} O\left(\frac{\e^{N-2}}{|x|^{N-1}}\right)+O(\e)\,\,\,~&\mbox{for}~N\geq 3,
\\[3mm]
\frac 1{\pi}\left(1-\frac {\ln |x|}{\ln \e}\right)\frac x{|x|^2}+O\left(\frac1{|x||\ln\e|}\right)~&\mbox{for}~N=2.
\end{cases}
\end{split}
\end{equation}
\end{tcolorbox}
\end{prop}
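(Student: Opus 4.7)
The plan is to follow the same scheme as the proof of Proposition~\ref{Re1}, but now starting from the vector identity \eqref{10-09-21} and expanding the square $\bigl(\partial G_{\O_\e}/\partial\nu_y\bigr)^2$ by writing $G_{\O_\e}=G_\O+(H_\O-H_{\O_\e})$ on the $\partial\O$ side and $G_{\O_\e}=G_{B^c_\e}+(H_{B^c_\e}-H_{\O_\e})$ on the $\partial B_\e$ side. This produces exactly the six boxed integrals already displayed in the introduction: the first box equals $\nabla\mathcal{R}_\O(x)$, the fourth equals $\nabla\mathcal{R}_{B^c_\e}(x)$ (by Lemma~\ref{Lap}), and the remaining four are error terms to be controlled.

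For the two error integrals over $\partial\O$ I would apply Lemma~\ref{llt} directly: it gives the pointwise uniform bound $|\nabla_y(H_\O-H_{\O_\e})(x,y)|=O(\e^{N-2}/|x|^{N-2})+O(\e)$ for $N\ge3$ (and the logarithmic analogue for $N=2$), uniformly in $y\in\partial\O$. Using this $L^\infty$-bound together with $\int_{\partial\O}|\partial G_\O/\partial\nu_y|\,d\sigma_y=1$ (total mass of the Poisson kernel), the cross term is controlled by $\|\nabla(H_\O-H_{\O_\e})\|_{L^\infty(\partial\O)}\cdot O(1)$ and the quadratic term is strictly smaller. Both fit into the advertised remainder: the extra factor $1/|x|$ appearing in $O(\e^{N-2}/|x|^{N-1})$ is harmless since $|x|\le\mathrm{diam}(\O)$.

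The delicate part is the two error integrals over $\partial B_\e$. Lemma~\ref{imp} reads, on $\partial B_\e$,
\[
\frac{\partial(H_{B^c_\e}-H_{\O_\e})}{\partial\nu_y}=-\frac{y}{\e}\cdot\phi_\e(x,y)+\begin{cases}O(1)&\text{for }N\ge3,\\[1mm] O(1/|\ln\e|)&\text{for }N=2,\end{cases}
\]
with $\phi_\e(x,y)$ written explicitly. Substituting into the cross term
\[
J_\e:=-2\int_{\partial B_\e}\frac{y}{\e}\frac{\partial G_{B^c_\e}(x,y)}{\partial\nu_y}\frac{\partial(H_{B^c_\e}-H_{\O_\e})}{\partial\nu_y}\,d\sigma_y
\]
and using $|y|^2=\e^2$, the leading part of $J_\e$ becomes a scalar-in-$x$ multiple of the vector integral $I(x):=\int_{\partial B_\e}(y/\e)(\partial G_{B^c_\e}(x,y)/\partial\nu_y)\,d\sigma_y$, plus higher moment integrals in $y$. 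Identifying $-I(x)$ as the Poisson representation of the harmonic extension of $-y/\e$ from $\partial B_\e$ to $B^c_\e$, the Kelvin transform gives $I(x)=-\e^{N-1}x/|x|^N$ for $N\ge3$ and $I(x)=-\e\,x/|x|^2$ for $N=2$ (consistent with the appendix formulas). For $N\ge3$ this produces a contribution of order $O(\e^{N-2}/|x|^{N-1})$, matching the claimed remainder. For $N=2$, substituting $G_\O(x,0)=-\frac{1}{2\pi}\ln|x|-H_\O(x,0)$ into the explicit form of $\phi_\e$, the dominant scalar factor simplifies to $-\frac{1}{2\pi}(1-\ln|x|/\ln\e)+O(1/|\ln\e|)$, whose product with $I(x)=-\e\,x/|x|^2$ and the prefactor $2/\e$ reproduces exactly the additional term $\frac{1}{\pi}(1-\ln|x|/\ln\e)\,x/|x|^2$ in \eqref{super}, in complete parallel with the appearance of $\frac{1}{2\pi}\ln(|x|^2/\e)$ in Proposition~\ref{Re1}. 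The quadratic term on $\partial B_\e$ is of strictly smaller order because its leading piece is a purely $x$-dependent scalar squared, killed upon integration by the symmetry identity $\int_{\partial B_\e}(y/\e)\,d\sigma_y=0$.

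The main obstacle is the clean bookkeeping of the several scalar pieces of $\phi_\e$ on $\partial B_\e$. Each piece has to be paired with the Poisson kernel of $B^c_\e$, with the resulting boundary integrals computed via suitable $\partial B_\e$-identities (analogues of \eqref{ap5}--\eqref{ap7} used in the proof of Proposition~\ref{Re1}). In the two-dimensional case the moment terms $\nabla_y H_\O(x,0)\cdot y$ and $\nabla_y H_\O(0,0)\cdot y\cdot\ln|x|/\ln\e$ present in $\phi_\e$ make the analysis particularly delicate, since the resulting contributions have to be absorbed into (or cancelled against) the $\partial\O$ integrals so as to fit within the remainder $O(1/(|x||\ln\e|))$. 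Once these identifications are performed, summing the six boxed pieces yields \eqref{super}.
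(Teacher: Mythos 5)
Your outline follows the paper's strategy faithfully: start from \eqref{10-09-21}, split on $\partial\O$ and $\partial B_\e$, recognize the first and fourth boxes as $\nabla\mathcal{R}_\O$ and $\nabla\mathcal{R}_{B^c_\e}$, control the $\partial\O$ error terms via Lemma~\ref{llt} together with $\int_{\partial\O}|\partial G_\O/\partial\nu_y|=1$, and compute the $\partial B_\e$ cross term via $\phi_\e$ and the appendix moment identities. For $N\geq 3$ this reproduces the paper's argument correctly.

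For $N=2$, however, there is a genuine gap. You assert that the quadratic $\partial B_\e$ term ($I_{3,\e}$ in the paper's notation) is ``of strictly smaller order because its leading piece is a purely $x$-dependent scalar squared, killed upon integration by $\int_{\partial B_\e}(y/\e)\,d\sigma_y=0$.'' This overlooks that in $N=2$ the function $\phi_\e(x,y)$ contains, in addition to the purely-$x$ scalar piece, a $y$-linear piece $-2\bigl(\nabla_y H_\O(x,0)\cdot y-\nabla_y H_\O(0,0)\cdot y\tfrac{\ln|x|}{\ln\e}\bigr)$; squaring $\phi_\e\cdot\nu_y$ produces a cross term that is $y$-quadratic, and $\int_{\partial B_\e}\frac{y}{\e}\,y_j\,d\sigma_y\neq0$. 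The paper computes this contribution explicitly and finds $I_{3,\e}=2\nabla_y H_\O(x,0)+O\bigl(\bigl|\tfrac{\ln|x|}{\ln\e}\bigr|\bigr)+O\bigl(\tfrac1{|\ln\e|}\bigr)$, i.e.\ an $O(1)$ term that is \emph{not} negligible. Correspondingly, the ``higher moment integrals'' you dismiss in the cross term $I_{4,\e}$ (the $\widetilde K_{4,\e}$ piece in the paper) produce $-2\nabla_y H_\O(x,0)$, and it is precisely the cancellation $-I_{3,\e}-I_{4,\e}$ \emph{within the $\partial B_\e$ integrals} that removes these $O(1)$ pieces and leaves $\frac1\pi\bigl(1-\tfrac{\ln|x|}{\ln\e}\bigr)\tfrac{x}{|x|^2}$. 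Your final sentence attributes the necessary cancellation to an absorption ``into (or cancelled against) the $\partial\O$ integrals,'' which is wrong: the $\partial\O$ terms $I_{1,\e},I_{2,\e}$ are genuinely small, and the mechanism is the mutual cancellation of $I_{3,\e}$ and $I_{4,\e}$ on $\partial B_\e$. Without recognizing this cancellation, the proposed $N=2$ argument leaves an uncontrolled $O(1)$ error and does not yield \eqref{super}.
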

\begin{proof}
Recalling \eqref{10-09-21} we have that
\begin{equation}\label{cnab}
\begin{split}
&\nabla \mathcal{R}_{\O_\e}(x)=\nabla \mathcal{R}_{\O}(x)+\nabla \mathcal{R}_{B^c_\e}(x)\\
&+\underbrace{\int_{\partial\Omega}\nu(y)\left(\frac{\partial \big(H_{\O}(x, y)-H_{\O_\e}(x,y)\big)}{\partial\nu_y}\right)^2d\sigma_y}_{I_{1,\e}}+\underbrace{2\int_{\partial\Omega}\nu(y) \frac{\partial G_\O(x,y)}{\partial\nu_y} \frac{\partial \big(H_{\O}(x, y)-H_{\O_\e}(x,y)\big)}{\partial\nu_y} d\sigma_y}_{I_{2,\e}}\\
&-\underbrace{\int_{\partial B_\e }\frac y\e\left(\frac{\partial \big(H_{B^c_\e}(x, y)-H_{\O_\e}(x,y)\big)}{\partial\nu_y}\right)^2d\sigma_y}_{I_{3,\e}}-\underbrace{2\int_{\partial B_\e }\frac y\e\frac{\partial G_{B^c_\e}(x,y)}{\partial\nu_y} \frac{\partial \big(H_{B^c_\e}(x, y)-H_{\O_\e}(x,y)\big)}{\partial\nu_y} d\sigma_y}_{I_{4,\e}}.
\end{split}
\end{equation}
We will show that, for $N\ge3$, the integrals $I_{1,\e},\cdots,I_{4,\e}$ are lower order terms with respect to $\nabla \mathcal{R}_{\O}$ and $\nabla \mathcal{R}_{B^c_\e}$. If $N=2$ the situation is more complicated, because both integrals $I_{3,\e}$ and $I_{4,\e}$ give  a contribution.
\vskip0.2cm
\noindent\underline{Computation of $I_{1,\e}$}
\vskip0.2cm
By \eqref{a07-24-8t} we get that
\begin{equation*}
I_{1,\e}=
\begin{cases}
O\left(\frac{\e^{2(N-2)}}{|x|^{2(N-2)}}\right)+O\big(\e^2\big)\,\,\,~&\mbox{for}~N\geq 3,\\[3mm]
O\left(\left|
 \frac{\ln^2|x|}{\ln^2\e}\right|\right)+
O\Big(\frac{1}{|\ln \e|^2}\Big)\,\,\,~&\mbox{for}~N=2.
\end{cases}\end{equation*}
\vskip0.2cm\noindent
\underline{Computation of $I_{2,\e}$}
\vskip0.2cm
By \eqref{a07-24-8t} we get that
\begin{equation*}
\begin{split}
 |I_{2,\e}|=& O\Big(\big|\nabla\big(H_{\O}(x, y)-H_{\O_\e}(x,y)\big)\big|\Big)
\underbrace{\int_{\partial\Omega}\left|\frac{\partial G_\O(x,y)}{\partial\nu_y}\right|d\sigma_y}_{=1}\\=&
\begin{cases}
O\Big(
 \frac{\e^{N-2}}{|x|^{N-2}}\Big)+O\big(\e\big)\,\,\,~&\mbox{for}~N\geq 3,\\[3mm]
O\Big(\left|
 \frac{\ln|x|}{\ln \e}\right|\Big)+
O\Big(\frac{1}{|\ln \e|}\Big)\,\,\,~&\mbox{for}~N=2.
\end{cases}
\end{split}
\end{equation*}
\vskip0.2cm\noindent
\underline{Computation of $I_{3,\e}$}
\vskip0.2cm
Then we look at the cases $N\ge3$ and $N=2$ separately.
\vskip0.2cm\noindent
{\bf Case $N\ge3$}
\vskip0.2cm
By \eqref{new} we get
\begin{equation*}
\begin{split}
I_{3,\e}=&
\int_{\partial B_\e}\frac y\e\left(\frac{\partial \big(H_{B^c_\e}(x, y)-H_{\Omega_\e}(x,y)\big)}{\partial\nu_y}\right)^2d\sigma_y\\
=&(N-2)^2\int_{\partial B_\e}\frac y\e\left[\frac1\e\left(H_{\Omega}(x,0)-H_{\Omega}(0,0)\frac{\e^{N-2}}{|x|^{N-2}}\right)+O(1)\right]^2
d\sigma_y\\
\end{split}
\end{equation*}

\begin{equation*}
\begin{split}
=&\frac{(N-2)^2}{\e^2}\left(H_{\Omega}(x,0)-H_{\Omega}(0,0)\frac{\e^{N-2}}{|x|^{N-2}}\right)^2\underbrace{\int_{\partial B_\e}\frac y\e d\sigma_y}_{=0}+O\Big(\frac1{\e}\Big)\underbrace{\int_{\partial B_\e}\left|\frac y\e\right|d\sigma_y}_{=O(\e^{N-1})}\\
&+O(1)\int_{\partial B_\e}\left|\frac y\e\right|d\sigma_y=O\Big(\e^{N-2}\Big).
\end{split}
\end{equation*}

{\bf Case $N=2$}
\vskip0.2cm
Again by \eqref{new} we have that
\begin{equation*}
\begin{split}
I_{3,\e}=&
\int_{\partial B_\e}\frac y\e\left(\frac{\partial \big(H_{B^c_\e}(x, y)-H_{\Omega_\e}(x,y)\big)}{\partial\nu_y}\right)^2d\sigma_y=\int_{\partial B_\e}\frac y\e\left(\phi_\e(x,y)\cdot\nu_y+O\Big(\frac 1{|\ln \e|}\Big)\right)^2d\sigma_y\\=
&\int_{\partial B_\e}\frac y\e\left(\phi_\e(x,y)\cdot\nu_y\right)^2d\sigma_y+O\Big(\frac 1{|\ln \e|}\Big)\underbrace{\int_{\partial B_\e}|\phi_\e(x,y)|d\sigma_y}_{=O(1)}+O\Big(\frac {\e}{(\ln \e)^2}\Big)
\\=
&\int_{\partial B_\e}\frac y\e\left(\phi_\e(x,y)\cdot\nu_y\right)^2d\sigma_y+O\Big(\frac 1{|\ln \e|}\Big)=\frac{1}{4\pi^2\e^2}
\left(\frac{G_{\Omega}(x,0)}{-\frac1{2\pi}\ln\e-H_{\Omega}(0,0)}-1\right)^2\underbrace{\int_{\partial B_\e}\frac y\e d\sigma_y}_{=0}\\
&+\frac{2}{\pi\e^2}\left(1-\frac {\ln |x|}{\ln \e} + O\Big(\frac 1{|\ln \e|}\Big) \right)\int_{\partial B_\e}\frac y\e\left[\nabla_y H_{\Omega}(x,0)\cdot y-\nabla_y H_{\Omega}(0,0)\cdot y\frac{\ln|x|}{\ln\e}
\right]d\sigma_y\\
&+\underbrace{\int_{\partial B_\e}\frac {|y|}\e O(1)d\sigma_y}_{=O(\e)}+O\Big(\frac 1{|\ln \e|}\Big) \\=
&\frac{2}{\pi\e^3}\left(1-\frac {\ln |x|}{\ln \e} + O\Big(\frac 1{|\ln \e|}\Big) \right)
\sum_{j=1}^2\left( \frac{\partial H_{\Omega}(x,0)}{\partial y_j}
-\frac{\partial H_{\Omega}(0,0)}{\partial y_j}\frac{\ln|x|}{\ln\e}
\right)\underbrace{\int _{\partial B_\e}yy_j d\sigma_y}_{=\pi \e^3 \delta_j^i}+O\Big(\frac 1{|\ln \e|}\Big)\\=
&2\left(1-\frac {\ln |x|}{\ln \e} \right)\left(\nabla_y H_{\Omega}(x,0)-\nabla _y  H_{\Omega}(0,0)\frac{\ln|x|}{\ln\e}\right)+O\Big(\frac 1{|\ln \e|}\Big)\\=
&2\nabla_y H_{\Omega}(x,0)+O\left(\left|\frac {\ln |x|}{\ln \e}\right|\right)+O\Big(\frac 1{|\ln \e|}\Big).
\end{split}
\end{equation*}
\underline{Computation of $I_{4,\e}$}
\vskip0.2cm
As in the previous step let us consider the case $N\ge3$ firstly.
\vskip0.2cm\noindent
{\bf Case $N\ge3$}
\vskip0.2cm
Recalling \eqref{new} and using \eqref{ap1} and \eqref{ap2} we have
\begin{equation*}
\begin{split}
I_{4,\e}=&2\int_{\partial B_\e}\frac y\e \frac{\partial G_{B^c_\e}(x,y)}{\partial\nu_y} \frac{\partial \big(H_{B^c_\e}(x, y)-H_{\Omega_\e}(x,y)\big)}{\partial\nu_y}d\sigma_y\\
=&2\frac{N-2}{\e}\underbrace{\left(H_{\Omega}(x,0)-H_{\Omega}(0,0)\frac{\e^{N-2}}{|x|^{N-2}}\right)}_{=O(1)}\underbrace{\frac{1}{\e}\int_{\partial B_\e}y \frac{\partial G_{B^c_\e}(x,y)}{\partial\nu_y}d\sigma_y}_{=
O\left(\frac{\e^{N-1}}{|x|^{N-1}}\right)\hbox{ by }\eqref{ap2}
}\\
&+O(1)\underbrace{\int_{\partial B_\e}\frac{\partial G_{B^c_\e}(x,y)}{\partial\nu_y}d\sigma_y}_{=O\left(\frac{\e^{N-2}}{|x|^{N-2}}\right)\hbox{ by }\eqref{ap1}
}=O\left(\frac{\e^{N-2}}{|x|^{N-1}}\right).
\end{split}
\end{equation*}
{\bf Case $N=2$}
\vskip0.2cm
Using \eqref{new} we have
\begin{equation}\label{akappahat-N=2}
\begin{split}
I_{4,\e}=&2\int_{\partial B_\e}\frac y\e\frac{\partial G_{B^c_\e}(x,y)}{\partial\nu_y} \frac{\partial \big(H_{B^c_\e}(x, y)-H_{\Omega_\e}(x,y)\big)}{\partial\nu_y}d\sigma_y\\
=&2\int_{\partial B_\e}\frac{y}{\e} \frac{\partial G_{B^c_\e}(x,y)}{\partial\nu_y} \left(\phi_\e(x,y)\cdot \nu_y+O\Big(\frac 1{|\ln \e|}\Big)\right)d\sigma_y\\
=&\frac{1}{\pi\e}
\left(1-\frac {\ln |x|}{\ln \e} +O\Big( \frac 1{| \ln \e|}\Big) \right)\underbrace{\int_{\partial B_\e}\frac{y}{\e} \frac{\partial G_{B^c_\e}(x,y)}{\partial\nu_y}d\sigma_y}_{=-\frac{x}{|x|^2}\e \hbox{ by }\eqref{ap2}}
\\+
&\underbrace{\frac 4{\e}\int_{\partial B_\e}\frac{y}{\e} \frac{\partial G_{B^c_\e}(x,y)}{\partial\nu_y}\left(\nabla_y H_{\Omega}(x,0)\cdot y-\nabla_y H_{\Omega}(0,0)\cdot y\frac{\ln|x|}{\ln\e}\right)d\sigma_y}_{:=\widetilde{K}_{4,\e}}\\
&+O\Big(\frac 1{|\ln \e|}\Big)\underbrace{\int_{\partial B_\e}\frac{|y|}{\e} \left|\frac{\partial G_{B^c_\e}(x,y)}{\partial\nu_y}\right|d\sigma_y}_{=O(1) \hbox{ by }\eqref{ap1}}.
\end{split}
\end{equation}
Also, by \eqref{ap10}, we have $\displaystyle\int _{\partial B_\e}yy_j \frac{\partial G_{B^c_\e}(x,y)}{\partial\nu_y} d\sigma_y =-\e^4\frac{xx_j}{|x|^4}-\delta_j^i\frac {\e^2}2\left(1-\frac {\e^2}{|x|^2}\right)$
and then
\begin{equation}\label{bkappahat-N=2}
\begin{split}
\widetilde K_{4,\e}=&-2\left(1-\frac{\e^2}{|x|^2}\right)\left( \nabla_y H_{\Omega}(x,0)
-\nabla_y H_{\Omega}(0,0)\frac{\ln|x|}{\ln\e}
\right)\\
&-4\e^2 \frac x{|x|^4}\sum_{j=1}^2\left( \frac{\partial H_{\Omega}(x,0)}{\partial y_j}-\frac{\partial H_{\Omega}(0,0)}{\partial y_j}\frac{\ln|x|}{\ln\e}\right)x_j\\=&
-2\left( \nabla_y H_{\Omega}(x,0)
-\nabla_y H_{\Omega}(0,0)\frac{\ln|x|}{\ln\e}\right)+O\left(\frac{\e^2}{|x|^2}\right).
\end{split}
\end{equation}
Hence from \eqref{akappahat-N=2} and \eqref{bkappahat-N=2}, we find
\begin{equation*}
\begin{split}
I_{4,\e}&=-\frac 1{\pi}\left(1-\frac {\ln |x|}{\ln \e}\right)\frac x{|x|^2}-2\nabla_y H_{\Omega}(x,0)+O\left(\frac1{|x|\cdot|\ln\e|}\right).
\end{split}
\end{equation*}
Collecting the estimates for $I_{1,\e},\cdots,I_{4,\e}$ we have the expansion for  $\nabla\mathcal{R}_{\O_\e}$.
\end{proof}
\vskip0.1cm~
\subsection{Estimate of  $\nabla^2\mathcal{R}_{\O_\e}$}~
\vskip0.2cm
These estimates will be crucial to prove the uniqueness of the critical point of $\mathcal{R}_{\O_\e}$ close to $\partial B(0,\e)$.
 The basic result of this section is the following.
\begin{prop}\label{prop-p2}
 For any $i,j=1,\cdots,N$ and for $x\in\O_\e$ such that $dist(x,\partial \Omega)\geq C>0$, we have
 \begin{tcolorbox}[colback=white,colframe=black]
 \begin{equation}\label{t07-24-11b}
 \begin{split}
&\frac{\partial^2 \mathcal{R}_{\O_\e}(x)}{\partial x_i\partial x_j}=\frac{\partial^2 \mathcal{R}_\O(x)}{\partial x_i\partial x_j}+\frac{\partial^2 \mathcal{R}_{B_\e^c}(x)}{\partial x_i\partial x_j}+\\
&\begin{cases}
O\left(\frac{\e^{N-2}}{dist (x,\partial B_\e)|x|^{N-2}}+\frac{\e}{dist (x,\partial B_\e)}+\frac{\e^{N-2}}{|x|^N}+\frac{\e^{N-2}|x|}{dist(x,\partial B_\e)^{N}}+\frac{\e^{N-2}|x|^2}{dist(x,\partial B_\e)^{N+1}}\right)&\mbox{for}~~N\geq 3,\\[3mm]
\frac1\pi\left(1-\frac{\ln|x|}{\ln\e}\right)\frac{\partial}{\partial x_j}\left(\frac{x_i}{|x|^2}\right)+O\left( \frac1{|x|^2|\ln\e|}+\left|
 \frac{\ln|x|}{dist (x,\partial B_\e)\ln \e}\right|\right)+\\
O\left(\frac{|x|}{dist(x,\partial B_\e)^2|\ln \e|}+\frac{|x|^2}{dist(x,\partial B_\e)^3|\ln \e|}\right)&\mbox{for}~~N=2.
 \end{cases}
 \end{split}
\end{equation}
\end{tcolorbox}
 \end{prop}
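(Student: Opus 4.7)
The plan is to differentiate the representation \eqref{cnab} obtained in the proof of Proposition \ref{prnab}. Under the hypothesis $\mrm{dist}(x,\pa\O)\ge C>0$, all boundary integrals over $\pa\O$ have integrands that depend smoothly on $x$ away from $\pa\O$, and all boundary integrals over $\pa B_\e$ have integrands that depend smoothly on $x$ away from $\pa B_\e$, so differentiating under the integral sign is justified. This produces
\[
\frac{\pa^2 \mathcal{R}_{\O_\e}(x)}{\pa x_i\pa x_j}=\frac{\pa^2 \mathcal{R}_{\O}(x)}{\pa x_i\pa x_j}+\frac{\pa^2 \mathcal{R}_{B^c_\e}(x)}{\pa x_i\pa x_j}+\sum_{k=1}^{4}(\pm)\frac{\pa I_{k,\e,i}(x)}{\pa x_j},
\]
and the task reduces to controlling each of these four derivatives. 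The central input is the second-order corollary: \eqref{sec1} bounds $\pa^2(H_\O-H_{\O_\e})/(\pa x_j\pa y_i)$ for $y$ away from $\pa B_\e$, and \eqref{sec2} bounds $\pa^2(H_{B^c_\e}-H_{\O_\e})/(\pa x_j\pa y_i)$ for $y\in\pa B_\e$ in terms of $\pa \phi_{\e,i}/\pa x_j$ plus a remainder involving $1/\mrm{dist}(x,\pa\O_\e)$.

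For $\pa I_{1,\e}/\pa x_j$ and $\pa I_{2,\e}/\pa x_j$ the integration is over $\pa\O$, where the hypothesis $\mrm{dist}(x,\pa\O)\ge C$ makes the gradient bound in Lemma \ref{lem2-3} cheap. Expanding the derivative of the products via the Leibniz rule and combining \eqref{a07-24-8t} with \eqref{sec1}, both terms can be bounded by quantities of the form $O\big(\e^{N-2}/|x|^{N-2}\big)+O(\e)$ for $N\ge 3$ and $O\big(|\ln|x|/\ln\e|\big)+O(1/|\ln\e|)$ for $N=2$, which are absorbed into the announced remainder.

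The delicate step is $\pa I_{3,\e}/\pa x_j$ and $\pa I_{4,\e}/\pa x_j$. The strategy is to mimic, term by term, the computations performed in the proof of Proposition \ref{prnab}, replacing each occurrence of $\pa(H_{B^c_\e}-H_{\O_\e})/\pa\nu_y$ by $\phi_\e(x,y)\cdot\nu_y+O(1)$ (resp.\ $O(1/|\ln\e|)$ for $N=2$) in the factor that is not differentiated, and by $\pa\phi_\e/\pa x_j+O\big(1/\mrm{dist}(x,\pa B_\e)\big)$ in the factor that is. One then exploits the elementary surface integrals \eqref{ap1}, \eqref{ap2}, \eqref{ap5}--\eqref{ap7}, \eqref{ap10} to evaluate the main pieces explicitly. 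The explicit $x$-dependence of $\phi_\e$ can be read off from its closed form in Lemma \ref{imp}; for $N=2$, differentiating the term $\tfrac{1}{2\pi\e}\big(G_\O(x,0)/(-\tfrac{1}{2\pi}\ln\e-H_\O(0,0))-1\big)$ with respect to $x_j$ produces, after pairing with $\int_{\pa B_\e}y y_i\,\pa G_{B^c_\e}/\pa\nu_y\,d\sigma_y$, the leading contribution $\tfrac{1}{\pi}\big(1-\ln|x|/\ln\e\big)\pa_{x_j}(x_i/|x|^2)$. For $N\ge 3$ the analogous differentiation yields only remainder contributions of size $\e^{N-2}/|x|^N$ and similar.

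The main obstacle is careful bookkeeping: each application of Lemma \ref{lem2-3} costs a factor $\mrm{dist}(x,\pa B_\e)^{-1}$, and when this is combined with the $|x|$- and $\e$-dependence of $\phi_\e$ and of $\pa G_{B^c_\e}/\pa\nu_y$, cross terms of the form $\e^{N-2}|x|/\mrm{dist}(x,\pa B_\e)^N$ and $\e^{N-2}|x|^2/\mrm{dist}(x,\pa B_\e)^{N+1}$ appear in $\pa I_{4,\e}/\pa x_j$; these account for the last two summands in the claimed error bound. One must also keep track of the fact that $|x|$ and $\mrm{dist}(x,\pa B_\e)$ can differ substantially when $x$ approaches $\pa B_\e$, so using one in place of the other would spoil the estimate precisely in the regime that will be needed when locating the critical point $x_\e$ close to $\pa B_\e$.
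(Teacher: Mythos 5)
Your overall plan coincides with the paper's: differentiate the four boxed boundary integrals in \eqref{cnab} via the Leibniz rule (producing six terms, call them $J_{1,\e},\dots,J_{6,\e}$), control the $\partial\O$ pieces using \eqref{a07-24-8t} and \eqref{sec1} together with the hypothesis $\mrm{dist}(x,\partial\O)\ge C$, and control the $\partial B_\e$ pieces by inserting $\phi_\e$ and $\partial_{x_j}\phi_\e$ from \eqref{a07-24-8tmm*} and \eqref{sec2}, then evaluating surface integrals explicitly. This is the paper's proof.

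Two points in your description, however, are wrong enough that you would get stuck if you followed them. First, you misattribute the origin of the explicit $N=2$ term $\frac1\pi(1-\frac{\ln|x|}{\ln\e})\partial_{x_j}(x_i/|x|^2)$: you say it comes from $\partial_{x_j}\phi_\e$ (the factor that \emph{is} differentiated) paired against $\int_{\partial B_\e} yy_i\,\partial G_{B^c_\e}/\partial\nu_y\,d\sigma_y$. In fact that pairing (which is the $J_{6,\e}$-type term) gives only $O(1/(|x|^2|\ln\e|))$ plus a $\partial^2 H_\O(x,0)/\partial x_j\partial y_i$ piece. The leading term arises instead from the $J_{5,\e}$-type product: the \emph{undifferentiated} $\phi_\e$ (whose $G_\O(x,0)$ factor contributes $\ln|x|/\ln\e - 1$) paired against $\int_{\partial B_\e} y_i\,\partial^2 G_{B^c_\e}/\partial x_j\partial\nu_y\,d\sigma_y = \partial_{x_j}\big(-\e^2 x_i/|x|^2\big)$, i.e.\ the $x_j$-derivative of \eqref{ap2}, not \eqref{ap10}. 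Similarly the cross remainders $\e^{N-2}|x|/\mrm{dist}^N$, $\e^{N-2}|x|^2/\mrm{dist}^{N+1}$ are not produced by Lemma \ref{lem2-3}; they come from the explicit bound on $\partial^2 G_{B^c_\e}/\partial x_j\partial\nu_y$ (formula \eqref{stG}) combined with $|x-y|\ge\mrm{dist}(x,\partial B_\e)$ on $\partial B_\e$.

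Second, and more importantly, your sketch omits the cancellation that makes the $N=2$ estimate close: the $J_{4,\e}$-type term (both factors from $\partial B_\e$, one $x_j$-differentiated via \eqref{sec2}) produces a contribution $+\partial^2 H_\O(x,0)/\partial x_j\partial y_i$, while the $J_{6,\e}$-type term produces the equal and opposite $-\partial^2 H_\O(x,0)/\partial x_j\partial y_i$. These two $O(1)$ terms cancel exactly; without noticing this, you would be left with an $O(1)$ error that is \emph{not} controlled by any of the remainder terms in \eqref{t07-24-11b} and the proposition would fail. Identifying and exploiting this cancellation is an essential step you must add to the plan.
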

 \begin{proof}
Differentiating formula \eqref{cnab} for $i,j=1,\cdots,N$ we have
\begin{equation*}
\begin{split}
&\frac{\partial^2 \mathcal{R}_{\O_\e}(x)}{\partial x_i\partial x_j}=\frac{\partial^2 \mathcal{R}_{\O}(x)}{\partial x_i\partial x_j}+\frac{\partial^2 \mathcal{R}_{B_\e^c}(x)}{\partial x_i\partial x_j}\\
&+2\underbrace{\int_{\partial\Omega}\nu_i(y)\left(\frac{\partial^2 \big(H_{\O}(x, y)-H_{\O_\e}(x,y)\big)}{\partial x_j\partial \nu_y}\right)\left(\frac{\partial \big(H_{\O}(x, y)-H_{\O_\e}(x,y)\big)}{\partial\nu_y}\right)d\sigma_y}_{J_{1,\e}}\\
&+2\underbrace{\int_{\partial\Omega}\nu_i(y) \frac{\partial^2 G_\O(x,y)}{\partial x_j\partial \nu_y} \frac{\partial \big(H_{\O}(x, y)-H_{\O_\e}(x,y)\big)}{\partial\nu_y} d\sigma_y}_{J_{2,\e}}+2\underbrace{\int_{\partial\Omega}\nu_i(y) \frac{\partial G_\O(x,y)}{\partial\nu_y} \frac{\partial^2 \big(H_{\O}(x, y)-H_{\O_\e}(x,y)\big)}{\partial x_j\partial\nu_y} d\sigma_y}_{J_{3,\e}}\\
&-2\underbrace{\int_{\partial B_\e}\frac{y_i}\e\left(\frac{\partial^2 \big(H_{B_\e^c}(x, y)-H_{\O_\e}(x,y)\big)}{\partial x_j\partial \nu_y}\right)\left(\frac{\partial \big(H_{B_\e^c}(x, y)-H_{\O_\e}(x,y)\big)}{\partial\nu_y}\right)d\sigma_y}_{J_{4,\e}}\\
&-2\underbrace{\int_{\partial B_\e}\frac{y_i}\e\frac{\partial^2 G_{B_\e^c}(x,y)}{\partial x_j\partial \nu_y} \frac{\partial \big(H_{B_\e^c}(x, y)-H_{\O_\e}(x,y)\big)}{\partial\nu_y} d\sigma_y}_{J_{5,\e}}-2\underbrace{\int_{\partial B_\e}\frac{y_i}\e\frac{\partial G_{B_\e^c}(x,y)}{\partial\nu_y} \frac{\partial^2 \big(H_{B_\e^c}(x, y)-H_{\O_\e}(x,y)\big)}{\partial x_j\partial\nu_y} d\sigma_y}_{J_{6,\e}}.
\end{split}
\end{equation*}
We have to estimate the integrals $J_{1,\e},\cdots,J_{6,\e}$. The computations are very similar to those of Proposition \ref{prnab}.\\
\underline{Computation of $J_{1,\e}$}
\vskip0.2cm
By Lemma \ref{llt} and \eqref{sec1} we get
\begin{equation*}
\begin{split}
& J_{1,\e}=
\begin{cases}
O\Big(
 \frac{\e^{2N-4}}{dist (x,\partial B_\e)|x|^{2N-4}}\Big)+O\left(\frac{\e^2}{dist (x,\partial B_\e)}\right) &\mbox{for}~N\geq 3,\\[3mm]
O\left(
 \frac{\ln^2|x|}{dist (x,\partial B_\e)\ln^2\e}\right) &\mbox{for}~N=2.
\end{cases}
 \end{split}
\end{equation*}
\underline{Computation of $J_{2,\e}$}\\
Here we use the assumption that $x$ satisfies $dist(x,\partial \Omega)\geq C>0$. We need it to have
\begin{equation*}
\frac{\partial^2 G_\O(x,y)}{\partial x_j\partial \nu_y}
=O(1)\, \mbox{ for}~~\,~y\in \partial \Omega.
\end{equation*}
Using Lemma \ref{llt} we immedialtely have
\begin{equation*}
\begin{split}
&\left|J_{2,\e}\right|\le\int_{\partial\Omega}\left|\frac{\partial^2 G_\O(x,y)}{\partial x_j\partial \nu_y}\right|\left|\frac{\partial \big(H_{\O}(x, y)-H_{\O_\e}(x,y)\big)}{\partial\nu_y}\right| d\sigma_y
=\begin{cases}
O\Big(
 \frac{\e^{N-2}}{|x|^{N-2}}\Big)+(\e)~&\mbox{for}~N\geq 3,\\[3mm]
O\left(\left|
 \frac{\ln|x|}{\ln \e}\right|\right)~&\mbox{for}~N=2.
\end{cases}
 \end{split}
\end{equation*}
\underline{Computation of $J_{3,\e}$}
\vskip0.2cm
Recalling that $\int_{\partial\Omega}\left|\frac{\partial G_\O(x,y)}{\partial\nu_y}\right|d\sigma_y=1$ we have
\begin{equation*}
\begin{split}
J_{3,\e}=&\int_{\partial\Omega}\left|\frac{\partial G_\O(x,y)}{\partial\nu_y}\right|\left|\frac{\partial^2 \big(H_{\O}(x, y)-H_{\O_\e}(x,y)\big)}{\partial x_j\partial\nu_y}\right| d\sigma_y\\=
&\begin{cases}
O\Big(
 \frac{\e^{N-2}}{dist (x,\partial B_\e)|x|^{N-2}}\Big)+O\left(\frac{\e}{dist (x,\partial B_\e)}\right)\,\,\,~&\mbox{for}~N\geq 3,\\[3mm]
O\left(\left|
 \frac{\ln|x|}{dist (x,\partial B_\e)\ln \e}\right|\right)~&\mbox{for}~N=2.
\end{cases}
 \end{split}
\end{equation*}
\underline{Computation of $J_{4,\e}$}
\vskip0.2cm
We will show that
\begin{equation}\label{sectild}
\begin{split}
J_{4,\e}=\begin{cases}
 O\left(\frac{\e^{N-2}}{dist (x,\partial B_\e)}\right)+O\left(\frac{\e^{2N-4}}{|x|^{N-1}}\right) ~&\mbox{for}~N\geq 3,\\[3mm]
  \frac{\partial^2H_\O(x,0)}{\partial x_j\partial y_i}+O\left(\frac1{dist (x,\partial B_\e)|\ln \e|}\right)+O\left(\big| \frac{\ln|x|}{\ln \e}\big|\right)~&\mbox{for}~N=2.
\end{cases}
 \end{split}
\end{equation}
By \eqref{sec2} we derive that
\begin{equation}\label{sti2}
\begin{split}
 &\frac{\partial^2\big(H_{\O_\e}(x, y)-H_{B^c_\e}(x,y)\big)}{\partial x_j\partial\nu_y}\Big|_{ y\in\partial B_\e}\\=
&\begin{cases}
 \frac{N-2}\e\left(\frac{\partial H(x,0)}{\partial x_j}-(2-N)H_{\Omega}(0,0)\frac {\e^{N-2}x_j}{|x|^N}\right)+O\left(\frac1{dist (x,\partial B_\e)}\right)&\mbox{for}~N\geq 3,\\[3mm]
-\frac1\e\frac1{-\ln\e-2\pi H_\O(0,0)}\frac{\partial G_\O(x,0)}{\partial x_j}+\frac 2\e \sum_{l=1}^2\left( \frac{\partial ^2H_\O(x,0)}{\partial x_j\partial y_l}-\frac{\partial H_\O(0,0)}{\partial y_l}\frac{x_j}{|x|^2\ln \e}\right)y_l+O\left(\frac 1{dist (x,\partial B_\e)|\ln \e|}\right)
~&\mbox{for}~N=2.
\end{cases}
 \end{split}
\end{equation}
Hence for $N\ge3$ we have (see \eqref{new})
\begin{equation*}
\begin{split}
J_{4,\e}=& \frac{(N-2)^2}{\e^2}
\int_{\partial B_\e}\frac{y_i}\e\left(H_{\Omega}(x,0)-H_{\Omega}(0,0)\frac{\e^{N-2}}{|x|^{N-2}}+O(\e)\right) \\&
~~~~~~~
\cdot \left(\frac{\partial H(x,0)}{\partial x_j}-(2-N)H_{\Omega}(0,0)\frac{\e^{N-2}x_j}{|x|^N}
+ O\left(\frac\e{dist (x,\partial B_\e)}\right)\right)d\s_y\\=
&\frac{(N-2)^2}{\e^2}\left(H_{\Omega}(x,0)-H_{\Omega}(0,0)\frac{\e^{N-2}}{|x|^{N-2}}\right)\left(\frac{\partial H(x,0)}{\partial x_j}-(2-N)H_{\Omega}(0,0)\frac{\e^{N-2}x_j}{|x|^{N}
}\right)
\underbrace{\int_{\partial B_\e}\frac{y_i}\e}_{=0} \\
&+\frac1{\e^2}\int_{\partial B_\e}\left(O\left(\frac\e{dist (x,\partial B_\e)}\right)+O\left(\frac{\e^{N-1}}{|x|^{N-1}}\right)\right)
=O\left(\frac{\e^{N-2}}{dist (x,\partial B_\e)}\right)+O\left(\frac{\e^{2N-4}}{|x|^{N-1}}\right),
 \end{split}
\end{equation*}
which gives \eqref{sectild} for $N\ge3$.
\vskip0.2cm\noindent
 If $N=2$ we have, again using \eqref{new},
\begin{equation*}
\begin{split}
J_{4,\e}=& \int_{\partial B_\e}\frac{y_i}{\e^3}\left[\frac1{-\ln\e-2\pi H_\O(0,0)}\frac{\partial G_\O(x,0)}{\partial x_j}-2\left(\sum _{l=1}^2\frac{\partial^2H_\O(x,0)}{\partial x_j\partial y_l}y_l-\nabla_yH_\O(0,0)\cdot y\frac{x_j}{|x|^2\ln\e}\right) \right.\\
&\left.\,\,\,+O\left(\frac\e{dist (x,\partial B_\e)|\ln \e|}\right)\right]\left[\frac1{2\pi}\left(\frac{G_{\Omega}(x,0)}{-\frac1{2\pi}\ln\e-H_{\Omega}(0,0)}-1\right)
+O(\e)\right]d\sigma_y \\
=&\frac1{2\pi\e^3}\left(\frac1{-\ln\e-2\pi H_\O(0,0)}\frac{\partial G_\O(x,0)}{\partial x_j}\right)\left(\frac{G_{\Omega}(x,0)}{-\frac1{2\pi}\ln\e-H_{\Omega}(0,0)}-1\right)\underbrace{\int_{\partial B_\e}y_i d\sigma_y}_{=0}\\
&-\frac1{\pi\e^3}\left(\frac{G_{\Omega}(x,0)}{-\frac1{2\pi}\ln\e-H_{\Omega}(0,0)}-1\right)\sum_{l=1}^2\left(\frac{\partial^2H_\O(x,0)}{\partial x_j\partial y_l}-\frac{\partial H_\O(0,0)}{\partial y_l}\frac{x_j}{|x|^2\ln\e}\right)
\underbrace{\int_{\partial B_\e}y_iy_l d\sigma_y}_{\pi\e^3\delta_i^l}\\
&-\frac1{2\pi\e^3}\left(\frac{G_{\Omega}(x,0)}{-\frac1{2\pi}\ln\e-H_{\Omega}(0,0)}-1\right)O\left(\frac\e{dist (x,\partial B_\e)|\ln \e|}\right)\underbrace{\int_{\partial B_\e}|y_i|d\sigma_y}_{=O(\e^2)} \\
&+ O(\e)\left(\frac1{-\ln\e-2\pi H_\O(0,0)}\frac{\partial G_\O(x,0)}{\partial x_j}\right)\underbrace{\int_{\partial B_\e}\frac{|y_i|}{\e^3}d\sigma_y }_{=O\left(\frac1\e\right)} \\=
&\frac1{\pi\e^3}\left(1-\frac{\ln|x|}{\ln\e}+O\left(\frac1{|x|\cdot |\ln\e|}\right)\right)\frac{\partial^2H_\O(x,0)}{\partial x_j\partial y_i}\underbrace{\int_{\partial B_\e}y_i^2d\sigma_y}_{=\pi\e^3}\\
&+O\left(\frac1{dist (x,\partial B_\e)|\ln \e|}+\Big| \frac{\ln|x|}{ \ln \e }\Big|\right)\\
=& \frac{\partial^2H_\O(x,0)}{\partial x_j\partial y_i}+O\left(\frac1{dist (x,\partial B_\e)|\ln \e|}\right)+O\left(\Big| \frac{\ln|x|}{ \ln \e }\Big|\right),
\end{split}
\end{equation*}
which proves \eqref{sectild}.
\vskip0.2cm
\noindent\underline{Computation of $J_{5,\e}$}
\vskip0.2cm
We will show that
\begin{equation*}
\begin{split}
J_{5,\e}=\begin{cases}
O\left(\frac{\e^{N-2}}{|x|^N}\right)+O\left(\frac{\e^{N-2}|x|}{dist(x,\partial B_\e)^{N}}\right)+O\left(\frac{\e^{N-2}|x|^2}{dist(x,\partial B_\e)^{N+1}}\right)
~&\mbox{for}~N\geq 3,\\[3mm]
\frac1{2\pi} \left(-1+\frac{\ln|x|}{\ln\e}+O\Big(\frac1{|\ln \e|}\Big)\right)\frac{\partial}{\partial x_j}\left(\frac{x_i}{|x|^2}\right)+\\O\left(\frac{|x|}{|\ln \e|dist(x,\partial B_\e)^2}\right)+O\left(\frac{|x|^2}{|\ln \e|dist(x,\partial B_\e)^3}\right)+O\left(\frac {\e^2}{|x|^3}\right)~&\mbox{for}~N=2.
\end{cases}
 \end{split}
\end{equation*}
Here we will use that, for $N\ge2$,
\begin{equation}\label{stG}
\begin{split}
N\omega_N\frac{\partial^2 G_{B_\e^c}(x,y)}{\partial x_j\partial \nu_y}\Big|_{|y|=\e}=&
-\frac{2x_j}{\e |x-y|^N}+N\frac{|x|^2-\e^2}{\e|x-y|^{N+2}}(x_j-y_j)\\
=&O\left(\frac{|x|}{\e |x-y|^N}\right)+O\left(\frac{|x|^2}{\e|x-y|^{N+1}}\right).
\end{split}
\end{equation}
We have that, by \eqref{new} and for $N\ge3$,
\begin{equation}\label{j5}
\begin{split}
 J_{5,\e}=&\int_{\partial B_\e}\frac{y_i}\e\frac{\partial^2 G_{B_\e^c}(x,y)}{\partial x_j\partial \nu_y}\left(\frac{N-2}\e\left(H_{\Omega}(x,0)-H_{\Omega}(0,0)\frac{\e^{N-2}}{|x|^{N-2}}\right)+O(1)\right)
d\sigma_y \\
=&\frac{N-2}{\e^2}\left(H_{\Omega}(x,0)-H_{\Omega}(0,0)\frac{\e^{N-2}}{|x|^{N-2}}\right)\underbrace{\int_{\partial B_\e}y_i\frac{\partial^2 G_{B_\e^c}(x,y)}{\partial x_j\partial \nu_y}}_{=\e^N\left(\frac {\delta^i_{j}|x|^2-Nx_ix_j}{|x|^{N+2}}\right)}
+\int_{\partial B_\e}O\left(\Big| \frac{\partial^2 G_{B_\e^c}(x,y)}{\partial x_j\partial y_i}\Big| \right)\\[1mm]
=& \left(\hbox{differentiating \eqref{ap2}, and using \eqref{stG}}\right)\\[1mm]=
&O\left(\frac{\e^{N-2}}{|x|^N}\right)+O\left(\frac{|x|}{\e}\right)\int_{\partial B_\e}\frac{1}{ |x-y|^N}d\sigma_y+O\left(\frac{|x|^2}\e\right)\int_{\partial B_\e}\frac{1}{ |x-y|^{N+1}}d\sigma_y\\
&(\hbox{and observing that, for $y\in\partial B_\e$ we get }dist(x,\partial B_\e)\le|x-y|,)\\
=&O\left(\frac{\e^{N-2}}{|x|^N}\right)+O\left(\frac{\e^{N-2}|x|}{dist(x,\partial B_\e)^{N}}\right)+O\left(\frac{\e^{N-2}|x|^2}{dist(x,\partial B_\e)^{N+1}}\right),
\end{split}
\end{equation}
which gives the claim.
If $N=2$ we have, using \eqref{sec2}
 \begin{equation*}
\begin{split}
J_{5,\e}=&-\int_{\partial B_\e}\frac{y_i}{\e^2}\frac{\partial^2 G_{B_\e^c}(x,y)}{\partial x_j\partial \nu_y}\left[\frac1{2\pi}\left(\frac{G_{\Omega}(x,0)}{-\frac1{2\pi}\ln\e-H_{\Omega}(0,0)}-1\right)
\right.\\
&\left.\,\,\,
-2\left(\nabla_y H_{\Omega}(x,0)\cdot y-\nabla_y H_{\Omega}(0,0)\cdot y\frac{\ln|x|}{\ln\e}\right)+O\Big(\frac\e{|\ln \e|}\Big)\right]d\sigma_y\\=
&-\frac1{2\pi\e^2}\underbrace{\left(\frac{G_{\Omega}(x,0)}{-\frac1{2\pi}\ln\e-H_{\Omega}(0,0)}-1
\right)}_{=-1+\frac{\ln|x|}{\ln\e}+O\left(\frac1{|\ln \e|}\right)}\underbrace{\int_{\partial B_\e}y_i\frac{\partial^2 G_{B_\e^c}(x,y)}{\partial x_j\partial \nu_y}}_{=-\frac{\partial}{\partial x_j}\left(\frac{x_i}{|x|^2}\e^2\right)}+\frac2{\e^2}\sum_{l=1}^2\frac{\partial H_{\Omega}(x,0)}{\partial x_l}\underbrace{\int_{\partial B_\e}y_iy_l\frac{\partial^2 G_{B_\e^c}(x,y)}{\partial x_j\partial \nu_y}}_{=O\left(\frac {\e^4}{|x|^3}\right)}\\
&-\frac2{\e^2}\sum_{l=1}^2\frac{\partial H_{\Omega}(0,0)}{\partial x_l}\frac{\ln|x|}{\ln\e}\underbrace{\int_{\partial B_\e}y_iy_l\frac{\partial^2 G_{B_\e^c}(x,y)}{\partial x_j\partial \nu_y}}_{=-\frac{\partial}{\partial x_j}\left(\frac {\e^2}{|x|^2}\left(\e^2\frac {x_ix_l}{|x|^2}+\frac {\delta^i_l}2(|x|^2-\e^2)\right)\right)=O\left(\frac {\e^4}{|x|^3}\right)}+O\Big(\frac1{|\ln \e|}\Big)\int_{\partial B_\e}\left|\frac{\partial^2 G_{B_\e^c}(x,y)}{\partial x_j\partial \nu_y}\right|\\[1mm]=
&\left(\hbox{differentiating \eqref{ap10} and using \eqref{stG} as in \eqref{j5}}\right)\\[1mm]
&\frac1{2\pi}\left(-1+\frac{\ln|x|}{\ln\e}+O\left(\frac1{|\ln \e|}\right)\right)\frac{\partial}{\partial x_j}\left(\frac{x_i}{|x|^2}\right)+O\left(\frac{|x|}{|\ln \e|dist(x,\partial B_\e)^2}\right)+O\left(\frac{|x|^2}{|\ln \e|dist(x,\partial B_\e)^3}\right)\\
&+O\left(\frac {\e^2}{|x|^3}\right).
\end{split}
\end{equation*}

\underline{Computation of $J_{6,\e}$}
\vskip0.2cm
We will show that
\begin{equation*}
\begin{split}
J_{6,\e}=\begin{cases}
O\left(\frac{\e^{N-2}}{|x|^{N-1}}\right)+O\left(\frac{\e^{N-2}}{dist (x,\partial B_\e)|x|^{N-2}}\right)&\mbox{for}~N\geq 3,\\[3mm]
 -\frac{\partial^2H_\O(x,0)}{\partial x_j\partial y_i}+O\left(\frac1{|x|^2\cdot| \ln\e|}\right)
+O\left(\frac1{dist (x,\partial B_\e)|\ln\e|}\right)&\mbox{for}~N=2.
\end{cases}
 \end{split}
\end{equation*}
We have, by \eqref{sti2} and $N\ge3$,
\begin{equation*}
\begin{split}
J_{6,\e}=&\int_{\partial B_\e}\frac{y_i}\e\frac{\partial G_{B_\e^c}(x,y)}{\partial\nu_y}\left(\frac{N-2}\e\left(\frac{\partial H_\O(x,0)}{\partial x_j}-(2-N)H_{\Omega}(0,0)
\frac{\e^{N-2}x_j}{|x|^{N}}
\right)+O\left(\frac1{dist (x,\partial\O_\e}\right)\right)d\sigma_y\\=
&\frac{N-2}{\e^2}\left(\frac{\partial H(x,0)}{\partial x_j}-(2-N)H_{\Omega}(0,0)
\frac{\e^{N-2}x_j}{|x|^{N}}
\right)\underbrace{\int_{\partial B_\e}y_i\frac{\partial G_{B_\e^c}(x,y)}{\partial\nu_y}d\sigma_y}_{=O\left(\frac {\e^N}{|x|^{N-1}}\right) \hbox{ by \eqref{ap2}}}\\&
+O\left(\frac1{dist (x,\partial\O_\e}\right)\underbrace{\int_{\partial B_\e}\left|\frac{\partial G_{B_\e^c}(x,y)}{\partial\nu_y}\right|d\sigma_y}_{=O\left(\frac{\e^{N-2}}{|x|^{N-2}}\right)\hbox{ by \eqref{ap1}}}=O\left(\frac{\e^{N-2}}{|x|^{N-1}}\right)+O\left(\frac{\e^{N-2}}{dist (x,\partial B_\e)|x|^{N-2}}\right),
 \end{split}
\end{equation*}
and for $N=2$,
\begin{equation*}
\begin{split}
 J_{6,\e}=&-\int_{\partial B_\e}\frac{y_i}{\e^2}\frac{\partial G_{B_\e^c}(x,y)}{\partial\nu_y}\left[\frac1{-\ln\e-2\pi H_\O(0,0)}\frac{\partial G_\O(x,0)}{\partial x_j}-2\left(\sum_{l=1}^2\frac{\partial^2H_\O(x,0)}{\partial x_j\partial y_l}y_l-\nabla_yH_\O(0,0)\cdot y\frac{x_j}{|x|^2\ln\e}\right) \right.\\ \,\,\,
&\left.+O\left(\frac\e{dist (x,\partial B_\e)|\ln \e|}\right)\right]d\sigma_y\\=
&\frac1{\e^2}\frac1{\ln\e+2\pi H_\O(0,0)}\underbrace{\frac{\partial G_\O(x,0)}{\partial x_j}}_{=O\left(\frac1{|x|}\right)}\underbrace{\int_{\partial B_\e}y_i\frac{\partial G_{B_\e^c}(x,y)}{\partial\nu_y}}_{=O\left(\frac{\e^2}{|x|}\right)\hbox{ by \eqref{ap2}}}+\frac2{\e^2}\frac{\partial^2 H_\O(x,0)}{\partial x_j\partial y_l}\underbrace{\int_{\partial B_\e}y_iy_l\frac{\partial G_{B_\e^c}(x,y)}{\partial\nu_y}d\sigma_y}_{=-\frac{\delta_i^l}2\e^2+O\left(\frac{\e^4}{|x|^2}\right)\hbox{ by \eqref{ap10}}}\\
&+ \frac2{\e^2}\nabla_yH_\O(0,0)\frac{x_j}{|x|^2\ln\e}\underbrace{\int_{\partial B_\e}y_iy_j\frac{\partial G_{B_\e^c}(x,y)}{\partial\nu_y}d\sigma_y}_{=-\frac{\delta_i^j}2\e^2+O\left(\frac{\e^4}{|x|^2}\right)\hbox{ by \eqref{ap10}}}+O\left(\frac1{dist (x,\partial B_\e)|\ln\e|}\right)\underbrace{\int_{\partial B_\e}\left|\frac{\partial G_{B_\e^c}(x,y)}{\partial\nu_y}\right|d\sigma_y}_{=1 \hbox{ by \eqref{ap1}}}\\=
&-\frac{\partial^2H_\O(x,0)}{\partial x_j\partial y_i}+O\left(\frac1{|x|^2\cdot |\ln\e|}\right)
+O\left(\frac1{dist (x,\partial B_\e)|\ln\e|}\right),
 \end{split}
\end{equation*}
which proves the claim.
\vskip0.2cm
Now we look at the cases $N=2$ and $N\ge3$ separately.
If $N\ge3$, collecting  the previous estimates we have that
\begin{equation*}
\begin{split}
& 2J_{1,\e}+2J_{2,\e}+2J_{3,\e}-2J_{4,\e}-2J_{5,\e}-2J_{6,\e} \\=& O\Big(
 \frac{\e^{2N-4}}{dist (x,\partial B_\e)|x|^{2N-4}}\Big)+O\left(\frac{\e^2}{dist (x,\partial B_\e)}\right)+O\Big(\frac{\e^{N-2}}{|x|^{N-2}}\Big)+O(\e)\\
 &+O\Big(\frac{\e^{N-2}}{dist (x,\partial B_\e)|x|^{N-2}}\Big)+O\left(\frac{\e}{dist (x,\partial B_\e)}\right)+O\left(\frac{\e^{N-2}}{dist (x,\partial B_\e)}\right)+O\left(\frac{\e^{2N-4}}{|x|^{N-1}}\right)\\
&+O\left(\frac{\e^{N-2}}{|x|^N}\right)+O\left(\frac{\e^{N-2}|x|}{dist(x,\partial B_\e)^{N}}\right)+O\left(\frac{\e^{N-2}|x|^2}{dist(x,\partial B_\e)^{N+1}}\right)+O\left(\frac{\e^{N-2}}{|x|^{N-1}}\right)+O\left(\frac{\e^{N-2}}{dist (x,\partial B_\e)|x|^{N-2}}\right)\\
=
&O\Big(\frac{\e^{N-2}}{dist (x,\partial B_\e)|x|^{N-2}}\Big)+O\left(\frac{\e}{dist (x,\partial B_\e)}\right)+O\left(\frac{\e^{N-2}}{|x|^N}\right)+O\left(\frac{\e^{N-2}|x|}{dist(x,\partial B_\e)^{N}}\right)+O\left(\frac{\e^{N-2}|x|^2}{dist(x,\partial B_\e)^{N+1}}\right).
 \end{split}
\end{equation*}
If $N=2$, collecting  the previous estimates we have that

\begin{equation*}
\begin{split}
&2J_{1,\e}+2J_{2,\e}+2J_{3,\e}-2J_{4,\e}-2J_{5,\e}-2J_{6,\e}\\=&
O\left(
 \frac{\ln^2|x|}{dist (x,\partial B_\e)\ln^2\e}\right)+
O\left(\frac{1}{dist (x,\partial B_\e)|\ln\e|^2}\right)+O\Big(\left|
 \frac{\ln|x|}{\ln \e}\right|\Big)\\
&+O\Big(\frac{1}{|\ln \e|}\Big)+
O\Big(\frac{1+\big|\ln|x|\big|}{dist (x,\partial B_\e)|\ln \e|}\Big)\cancel{-2\frac{\partial^2H_\O(x,0)}{\partial x_j\partial y_i}}
\\&+
\frac1{\pi}\left(1-\frac{\ln|x|}{\ln\e}+O\left(\frac1{|\ln \e|}\right)\right)\frac{\partial}{\partial x_j}\left(\frac{x_i}{|x|^2}\right) +O\left(\frac{|x|}{|\ln \e|dist(x,\partial B_\e)^2}\right)\\
&+O\left(\frac{|x|^2}{|\ln \e|dist(x,\partial B_\e)^3}\right)+O\left(\frac {\e^2}{|x|^3}\right)+\cancel{2\frac{\partial^2H_\O(x,0)}{\partial x_j\partial y_i}}
+O\left(\frac1{|x|^2\cdot|\ln\e|}\right)\\
=&\frac1\pi\left(1-\frac{\ln|x|}{\ln\e}\right)\frac{\partial}{\partial x_j}\left(\frac{x_i}{|x|^2}\right)+O\left(\frac1{|x|^2|\ln\e|}+\frac1{dist (x,\partial B_\e)|\ln\e|}+\frac{\ln |x|}{dist (x,\partial B_\e)|\ln\e|}\right)\\
&+O\left(\frac{|x|}{dist(x,\partial B_\e)^2|\ln \e|}+\frac{|x|^2}{dist(x,\partial B_\e)^3|\ln \e|}\right),
 \end{split}
\end{equation*}
which ends the proof.
\end{proof}
We will apply the $C^2$ estimates of $\mathcal{R}_{\O_\e}$ at the {\em critical points} of $\mathcal{R}_{\O_\e}$. This leads to the following corollary.
\begin{cor}\label{corC2}
Set
$$\mathcal{D}_{\e,q,c}=\left\{x\in\O_\e\hbox{ such that }dist(x,\partial\O)\ge C>0
\hbox{ and for some }c>0, 0<q\le1,\begin{cases}
|x|\ge c\e^q&\hbox{ if }N\ge3\\
|x|\ge c r_\e^q&\hbox{ if }N=2
\end{cases}\right\}.
$$
Then for any $i,j=1,\cdots,N$ and $x\in\mathcal{D}_{\e,q,c}$  we have that
\begin{equation}\label{3-28}
 \begin{split}
\frac{\partial^2 \mathcal{R}_{\O_\e}(x)}{\partial x_i\partial x_j}=&\frac{\partial^2 \mathcal{R}_\O(x)}{\partial x_i\partial x_j} \\
&+\begin{cases}
-\left(\frac2{N\omega_N}+o(1)\right)\frac{2(1-N)x_ix_j+\delta_i^j|x|^2}{|x|^{2N}}\e^{N-2}
+O\left(\e^{1-q}+\e^{N-2-Nq}\right)&\mbox{for}~~N\geq 3,\\[3mm]
-\frac1\pi\frac{\ln|x|}{\ln\e}\frac{-2x_ix_j+\delta_i^j
|x|^2}{|x|^4}+O\left(\frac1{r_\e^{2q}|\ln\e|}+r_\e^{1-q}\right)+o(1)&\mbox{for}~~N=2.
 \end{cases}
 \end{split}
\end{equation}
 \end{cor}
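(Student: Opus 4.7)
The plan is to insert into the expansion of Proposition \ref{prop-p2} the explicit Hessian of the exterior-ball Robin function $\mathcal{R}_{B_\e^c}$, and then bound every remainder using the two lower bounds built into $\mathcal{D}_{\e,q,c}$.

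First I would compute $\nabla^2\mathcal{R}_{B_\e^c}$. Differentiating once more in $x_j$ the explicit gradients displayed in the boxes of \eqref{I8} and \eqref{I9} yields, for $N\geq 3$,
$$\frac{\partial^2 \mathcal{R}_{B_\e^c}(x)}{\partial x_i\partial x_j}=-\frac{2\e^{N-2}}{N\omega_N}\frac{(|x|^2-\e^2)\delta_{ij}-2(N-1)x_ix_j}{(|x|^2-\e^2)^{N}},$$
and for $N=2$,
$$\frac{\partial^2 \mathcal{R}_{B_\e^c}(x)}{\partial x_i\partial x_j}=-\frac{1}{\pi}\frac{(|x|^2-\e^2)\delta_{ij}-2x_ix_j}{(|x|^2-\e^2)^{2}}.$$
On $\mathcal{D}_{\e,q,c}$ one has $\e^2/|x|^2=O(\e^{2-2q})=o(1)$ (or the analogous bound $O(\e^2/r_\e^{2q})=o(1)$ when $N=2$), so the expansion $(|x|^2-\e^2)^{-k}=|x|^{-2k}(1+o(1))$ identifies the leading contribution. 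For $N\geq 3$ this directly produces the leading term in the first line of \eqref{3-28}.

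The case $N=2$ requires one further cancellation. The leading part $-\frac{1}{\pi}\frac{\delta_{ij}|x|^2-2x_ix_j}{|x|^4}$ of $\nabla^2\mathcal{R}_{B_\e^c}$ combines with the extra piece $\frac{1}{\pi}(1-\frac{\ln|x|}{\ln\e})\frac{\partial}{\partial x_j}(\frac{x_i}{|x|^2})=\frac{1}{\pi}(1-\frac{\ln|x|}{\ln\e})\frac{\delta_{ij}|x|^2-2x_ix_j}{|x|^4}$ appearing in Proposition \ref{prop-p2}: the constant $1$ inside the parenthesis cancels exactly the explicit Hessian of $\mathcal{R}_{B_\e^c}$, and only $-\frac{1}{\pi}\frac{\ln|x|}{\ln\e}\frac{-2x_ix_j+\delta_{ij}|x|^2}{|x|^4}$ survives, which is precisely the leading term in the second line of \eqref{3-28}. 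Because the two contributions separately are of order $|x|^{-2}$ while only their difference is of order $\ln|x|/(|x|^2\ln\e)$, verifying this exact cancellation is the delicate step of the argument.

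Finally I would translate every remainder of Proposition \ref{prop-p2}. Since $|x|\geq c\e^q\gg\e$ (respectively $|x|\geq cr_\e^q\gg\e$), we have $dist(x,\partial B_\e)=|x|-\e=|x|(1-o(1))$; replacing $dist(x,\partial B_\e)$ by $|x|$ in the five $N\geq 3$ remainders collapses them to $\e^{N-2}/|x|^{N-1}$, $\e/|x|$ and $\e^{N-2}/|x|^{N}$, which via $|x|\geq c\e^q$ become $O(\e^{N-2-q(N-1)})$, $O(\e^{1-q})$ and $O(\e^{N-2-Nq})$; since $q\leq 1$ the first is dominated by the third, yielding the stated $O(\e^{1-q}+\e^{N-2-Nq})$. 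For $N=2$, combining $dist\sim|x|$, $|x|\geq cr_\e^q$ and the asymptotic $|\ln r_\e|\sim r_\e|\ln\e|$ coming from \eqref{re} converts the four remainders of Proposition \ref{prop-p2} into $O(r_\e^{1-q}+1/(r_\e^{2q}|\ln\e|))+o(1)$ as stated, the final $o(1)$ absorbing subleading contributions such as $\e^2/|x|^4$. Beyond the $N=2$ cancellation highlighted above, no new analytic tool is needed; the argument is essentially bookkeeping.
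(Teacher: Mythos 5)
Your proposal is correct and follows essentially the same route as the paper: insert the explicit Hessian of $\mathcal{R}_{B_\e^c}$ into Proposition \ref{prop-p2}, expand $(|x|^2-\e^2)^{-k}$ using the lower bounds on $|x|$, replace $dist(x,\partial B_\e)$ by $|x|$, and (for $N=2$) observe that the constant part of $\frac1\pi\bigl(1-\frac{\ln|x|}{\ln\e}\bigr)\frac{\partial}{\partial x_j}\bigl(\frac{x_i}{|x|^2}\bigr)$ cancels the leading part of $\nabla^2\mathcal{R}_{B_\e^c}$, leaving the $-\frac1\pi\frac{\ln|x|}{\ln\e}$ contribution. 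Your write-up is in fact a bit more explicit than the paper's proof, notably in spelling out the $N=2$ cancellation and in using the identity $|\ln r_\e|=r_\e|\ln\e|$ to simplify the remainders; the only minor imprecision is calling the cancellation ``exact''--it holds only to leading order, up to $O(\e^2/|x|^4)$ which is $o(1)$ on $\mathcal{D}_{\e,q,c}$.
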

 \begin{rem}
We will apply this corollary with $q\le\frac{N-2}{2N-3}$ for $N\ge3$ and any $q<1$ for $N=2$.
 \end{rem}
\begin{proof}
First of all we have that, for $N\ge2$ and since $|x|\ge c \e^q$,
\begin{equation*}
\begin{split}
\frac{\partial^2\mathcal{R}_{B_\e^c}(x)}{\partial x_i\partial x_j}=&-\frac2{N\omega_N}\frac{2(1-N)x_ix_j+\delta_i^j(|x|^2-\e^2)}{(|x|^2-\e^2)^N}\e^{N-2}
\\=&-\left(\frac2{N\omega_N}+O\left(\frac{\e^2}{|x|^{2N}}\right)\right)\frac{2(1-N)x_ix_j+\delta_i^j|x|^2}{|x|^{2N}}\e^{N-2}.
\end{split}\end{equation*}
 Next we observe that
$dist (x,\partial B_\e)=|x|-\e\sim|x|\hbox{ since $x\in\mathcal{D}_{\e,q,c}$ and by Remark \ref{cre}}$.
So we have that \eqref{t07-24-11b} becomes
\begin{equation*}
 \begin{split}
\frac{\partial^2 \mathcal{R}_{\O_\e}(x)}{\partial x_i\partial x_j}=&\frac{\partial^2 \mathcal{R}_\O(x)}{\partial x_i\partial x_j} \\
&+ \begin{cases}
-\left(\frac2{N\omega_N}+o(1)\right)\frac{2(1-N)x_ix_j+\delta_i^j|x|^2}{|x|^{2N}}\e^{N-2}
+O\left(\e^{1-q}+\e^{N-2-Nq}\right)&\mbox{for}~~N\geq 3,\\[3mm]
-\frac1{\pi} \frac{\ln|x|}{\ln\e} \frac{\partial}{\partial x_j}\left(\frac{x_i}{|x|^2}\right)+O\left(\frac1{r_\e^{2q}|\ln\e|}\right)+O\left(r_\e^{1-q}\right)
+\underbrace{O\left(\frac{\e^2}{|x|^6}\right)}_{=o(1)}&\mbox{for}~~N=2,
 \end{cases}
 \end{split}
\end{equation*}
which gives the claim.
\end{proof}

\section{The case $\nabla\mathcal{R}_\O(0)\ne0$. Proof of  Theorem \ref{I6} and Corollary \ref{th1-2dd}.}\label{s5}
\vskip 0.2cm
We start this section with a {\em necessary condition} on the location of the critical points of $\mathcal{R}_{\O_\e}$. Basically it is a consequence of Proposition \ref{prnab}.
\begin{prop}\label{cor}Set
\begin{equation}\label{y0}
 y_0=\frac{2^\frac1{2N-3}}{(N\omega_N)^\frac1{2N-3}|\nabla \mathcal{R}_\O(0)|^\frac{2N-2}{2N-3}}\nabla \mathcal{R}_\O(0).
\end{equation}
 If $x_\e$ is a critical point of $\mathcal{R}_{\O_\e}(x)$, then for $\e\to0$ we have that either
  \begin{equation}\label{cor1}
 x_\e\to x_0\neq 0\hbox{ with }\nabla \mathcal{R}_\O(x_0)=0
 \end{equation}
 or
  \begin{equation}\label{??}
  \begin{cases}
x_\e=\big(y_0+o(1)\big)\e^\frac{N-2}{2N-3}&
\mbox{if}\ N\ge3,\\ \\
x_\e=\left[y_0+o(1)\right]r_\e&\mbox{if}\ N=2,
\end{cases}
 \end{equation}
 where $r_\e$ is defined in \eqref{re}.
 \end{prop}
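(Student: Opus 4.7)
The plan is to apply the gradient expansion from Proposition~\ref{prnab} at the critical point $x_\e$ and perform a dichotomy based on whether $x_\e\to 0$ or not. At a critical point, \eqref{super} reads
\begin{equation*}
0 \,=\, \nabla\mathcal{R}_\O(x_\e) + \nabla\mathcal{R}_{B^c_\e}(x_\e) + R_\e(x_\e),
\end{equation*}
where $R_\e$ collects the remainder terms (together with the $\tfrac{1}{\pi}\bigl(1-\tfrac{\ln|x|}{\ln\e}\bigr)\tfrac{x}{|x|^2}$ correction when $N=2$). First I would rule out $x_\e$ approaching $\partial\O$: since $|\nabla\mathcal{R}_\O|$ blows up near $\partial\O$ while $\nabla\mathcal{R}_{B^c_\e}$ and $R_\e$ stay bounded on subsets of $\O_\e$ avoiding $\overline{B_\e}$, the balance above forces $\mathrm{dist}(x_\e,\partial\O)\ge c>0$. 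By compactness, along a subsequence $x_\e\to x_0\in\overline\O\setminus\partial\O$.

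\textbf{Case $x_0\neq 0$.} Here $|x_\e|$ is bounded below, so $\nabla\mathcal{R}_{B^c_\e}(x_\e)=O(\e^{N-2})\to 0$ for $N\ge 3$; for $N=2$ a direct computation using $\ln|x_\e|/\ln\e\to 0$ shows that $\nabla\mathcal{R}_{B^c_\e}(x_\e)+\tfrac{1}{\pi}\bigl(1-\tfrac{\ln|x_\e|}{\ln\e}\bigr)\tfrac{x_\e}{|x_\e|^2}\to 0$. In both cases the remainder in \eqref{super} also vanishes, so passing to the limit in the balance equation yields $\nabla\mathcal{R}_\O(x_0)=0$, i.e.\ \eqref{cor1}.

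\textbf{Case $x_0=0$.} Now $\nabla\mathcal{R}_\O(x_\e)\to\nabla\mathcal{R}_\O(0)\neq 0$, so the singular contributions must balance this nonzero limit, which fixes the scaling. For $N\ge 3$ the dominant term is $\nabla\mathcal{R}_{B^c_\e}(x_\e)\sim -\tfrac{2\e^{N-2}}{N\omega_N}\tfrac{x_\e}{|x_\e|^{2N-2}}$ (valid because $|x_\e|/\e\to\infty$, verified a posteriori using $\tfrac{N-2}{2N-3}<1$), whose magnitude is comparable to $\e^{N-2}/|x_\e|^{2N-3}$. Matching orders forces $|x_\e|\asymp\e^{(N-2)/(2N-3)}$. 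Intermediate rates are ruled out by contradiction: if $|x_\e|\gg\e^{(N-2)/(2N-3)}$ then both $\nabla\mathcal{R}_{B^c_\e}$ and $R_\e$ vanish, contradicting $\nabla\mathcal{R}_\O(0)\neq 0$; if $|x_\e|\ll\e^{(N-2)/(2N-3)}$ the correction blows up and cannot be balanced. For $N=2$ the analogous computation uses the leading correction $-\tfrac{1}{\pi}\tfrac{\ln|x_\e|}{\ln\e}\tfrac{x_\e}{|x_\e|^2}$ (obtained by combining $\nabla\mathcal{R}_{B^c_\e}$ with the explicit log term), and the defining relation $r-\ln r/\ln\e=0$ yields $|x_\e|\asymp r_\e$.

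\textbf{Identification of the limit.} With the correct scale in hand, I rescale by writing $x_\e=\e^{(N-2)/(2N-3)}z_\e$ for $N\ge 3$ (respectively $x_\e=r_\e z_\e$ for $N=2$) with $|z_\e|$ bounded above and below. Up to subsequence $z_\e\to z^*$, and passing to the limit in the rescaled balance equation produces the algebraic equation
\begin{equation*}
\nabla\mathcal{R}_\O(0) \,=\, \frac{2}{N\omega_N}\,\frac{z^*}{|z^*|^{2N-2}},
\end{equation*}
with the analogous log-renormalised equation for $N=2$. This forces $z^*$ to be a positive multiple of $\nabla\mathcal{R}_\O(0)$, and the magnitude is uniquely determined since $|z|\mapsto 1/|z|^{2N-3}$ is strictly monotone; solving gives exactly $z^*=y_0$ from \eqref{y0}. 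Since the subsequential limit is unique, the full family satisfies \eqref{??}. The main obstacle is the rate determination: one must compare every remainder term in \eqref{super} against the leading balance in each regime, which for $N=2$ requires careful bookkeeping of logarithms via the defining equation for $r_\e$.
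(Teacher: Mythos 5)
Your proposal is correct and follows essentially the same route as the paper: plug \eqref{super} into the critical-point equation, split according to whether $x_\e\to 0$, and for the degenerate case extract the scale and limiting direction from the balance between $\nabla\mathcal{R}_\O(0)$ and the singular term. Two small remarks: (i) you add an explicit step ruling out $x_\e\to\partial\O$ (using that $|\nabla\mathcal{R}_\O|$ blows up there while the other terms stay bounded); the paper leaves this implicit, so this is a welcome clarification. (ii) For the rate determination the paper is more economical: rather than a three-way dichotomy on the rate, it reads directly off \eqref{c4} that the bracketed factor has magnitude at least $|x_\e|^{1-N}-C|x_\e|^{-1}\to\infty$, forcing $\e/|x_\e|\to 0$, and then takes the limit to obtain $\e^{N-2}/|x_\e|^{2N-3}\to\tfrac{N\omega_N}{2}|\nabla\mathcal{R}_\O(0)|$; for $N=2$ it first isolates $\ln|x_\e|/\ln\e\to 0$ and $\e^2/(|x_\e|^2-\e^2)\to 0$ using a sign argument before renormalising by $r_\e$. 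Your rescaling $x_\e=\e^{(N-2)/(2N-3)}z_\e$ and passage to the limiting algebraic equation is an equivalent, slightly longer, way to identify $y_0$; just make sure you nail down $\e/|x_\e|\to0$ \emph{before} using the simplified form of $\nabla\mathcal{R}_{B_\e^c}$, as the paper does, rather than invoking it "a posteriori".
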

 \begin{proof}
 Let $x_\e$ be a critical point of $\mathcal{R}_{\O_\e}$ and first consider $N\ge3$.
By \eqref{super} we get
  \begin{equation}\label{c3}
0=\nabla  \mathcal{R}_\O(x_\e)-\frac{2\e^{N-2}}{N\omega_N}\frac {x_\e}{\left(|x_\e|^2-\e^2\right)^{N-1}}+O\left(\frac{\e^{N-2}}{|x_\e|^{N-1}}\right)+O\Big(\e\Big).
 \end{equation}
If $x_\e\to x_0\ne0$ we have that  \eqref{cor1} holds. So let us suppose that
$$x_\e\to0.$$
By \eqref{super} we have
 \begin{equation}\label{c4}
  \begin{split}
 &\nabla\mathcal{R}_\O(0)+o(1)=\frac{2\e^{N-2}}{N\omega_N|x_\e|^{N-2}}\left[
 \frac{ {x_\e}}{|x_\e|^{N}\left(1-\frac{\e^2}{|x_\e|^2}\right)^{N-1}}+O\left(\frac 1{|x_\e|}\right)\right]
\end{split}
 \end{equation}
 and, since
  \begin{equation*}
  \begin{split}
 &
 \left|\frac{ {x_\e}}{|x_\e|^{N}\left(1-\frac{\e^2}{|x_\e|^2}\right)^{N-1}}+O\left(\frac 1{|x_\e|}\right)\right|\ge\frac1{|x_\e|^{N-1}}-\frac C{|x_\e|}\to+\infty\hbox{ if }N\ge3,
\end{split}
 \end{equation*}
 then $\lim\limits_{\e\to0}\frac{\e}{|x_\e|}=0$ and \eqref{c4} becomes
 \begin{equation*}
  \begin{split}
 &\nabla\mathcal{R}_\O(0)+o(1)=\frac{2\e^{N-2}}{N\omega_N|x_\e|^{2N-3}}\left[\frac {x_\e}{|x_\e|}+o(1)\right]\Rightarrow\\
 &\lim\limits_{\e\to0}\frac{\e^{N-2}}{|x_\e|^{2N-3}}=\frac{N\omega_N}2|\nabla\mathcal{R}_\O(0)|,
\end{split}
\end{equation*}
which gives the claim.
\vskip 0.2cm
Next we consider the case $N=2$. As for $N\ge3$ we get that, if $x_\e\to x_0\ne0$ by \eqref{super} we derive that \eqref{cor1} holds. So assume that $x_\e\to 0$. By \eqref{super} we have
\begin{equation}\label{gradiente-robin}
\begin{split}\nabla   \mathcal{R}_\O(0)+o(1)&=\frac{ x_\e }{\pi|x_\e|^2}\left( \frac{ \ln |x_\e| }{\ln \e}+\frac{\e^2 }{|x_\e|^2-\e^2}+ O\left(\frac 1{|\ln \e|}\right)\right) .
\end{split}\end{equation}
Since $\frac 1{|x_\e|}\to+\infty$ and at least one among $ \frac{ (x_\e)_i }{|x_\e|}\neq 0$, using that the two terms $\frac{ \ln |x_\e| }{\ln \e}$, $\frac{\e^2 }{|x_\e|^2-\e^2}$ have the same sign, we get that
\begin{equation}\label{4.7-bis}
 \frac{ \ln |x_\e| }{\ln \e}=o(1)\hbox{ and }\frac{\e^2 }{|x_\e|^2-\e^2}=o(1),
\end{equation}
which implies also that
\begin{equation}\label{4.7-ter}\frac{\e^2 }{|x_\e|^3}=o(1)  \hbox{ and then }\frac{\e^2 }{|x_\e|\big(|x_\e|^2-\e^2\big)}=o(1),\end{equation}
(indeed, if by contradiction $\frac{\e^2 }{|x_\e|^3}\ge k>0$ then $2-3\frac{\ln|x_\e|}{\ln\e}\le\frac{\ln k}{\ln\e}$, a contradiction as $\e\to0$.)
So that \eqref{gradiente-robin} becomes
\begin{equation}\label{c7}
\begin{split}
\pi|\nabla\mathcal{R}_\O(0)|(1+o(1))&=\frac1{|x_\e|}\frac{ \ln |x_\e| }{\ln \e}\left(1+O\left(\frac 1{|\ln |x_\e||}\right)\right)\\
&=
\frac{r_\e}{|x_\e|}\frac{\ln |x_\e| }{\ln r_\e}\left(1+o(1)\right)=
\left[\frac{\ln\frac{|x_\e|}{r_\e}}{\frac{|x_\e|}{r_\e}\ln r_\e}+\frac1{\frac{|x_\e|}{r_\e}}\right]\big(1+o(1)\big).
\end{split}
\end{equation}
Since the both terms $\frac{\ln\frac{|x_\e|}{r_\e}}{\frac{|x_\e|}{r_\e}\ln r_\e}$ and $\frac{|x_\e|}{r_\e}$ are positive and  $r_\e\to 0$, it cannot happen that $\frac{|x_\e|}{r_\e}\to 0$. Moreover, since $\nabla\mathcal{R}_\O(0)\ne0$ it is not possible that $\frac{|x_\e|}{r_\e}\to+\infty.$
Then $\frac{|x_\e|}{r_\e}\to A\in(0,+\infty)$ and by \eqref{c7} we have that $A=\frac1{\pi|\nabla\mathcal{R}_\O(0)|}$ which proves the claim.

 \end{proof}
  \begin{rem}\label{cre}
Let us point out that
\begin{equation}\label{cre1}
r_\e\in \left(\frac{1}{|\ln \e|}, \frac{1}{\sqrt{|\ln \e|}}\right).
\end{equation}
In fact, taking $g(r)=r-\frac{\ln r}{\ln \e}$, then
$$g'(r)=1-\frac{1}{r\ln \e}>0,~\mbox{for any}~r\in (0,\infty),$$
and
$$g(\frac{1}{|\ln \e|})= \frac{1-\ln |\ln \e|}{|\ln \e|}<0,~~g(\frac{1}{\sqrt{|\ln \e|}})= \frac{1}{\sqrt{|\ln \e|}} -\frac{\ln |\ln \e|}{2 |\ln \e| }>0.$$
\end{rem}
 \begin{rem}\label{Rc7} Proposition \ref{cor} implies that the critical points of $\mathcal{R}_{\O_\e}$ that converges to $0$ belong to the ball $$\mathcal{C}_\e=
\begin{cases}
B\left(y_0\e^\frac{N-2}{2N-3},\d\e^\frac{N-2}{2N-3}\right)&\mbox{if}\ N\ge3,\\[2mm]
B(y_0r_\e,\d r_\e)&\mbox{if}\ N=2.
\end{cases}$$
where $\delta>0$ is a small fixed constant.
So,  if $x_\e$ is a critical point of $\mathcal{R}_{\O_\e}$ then either
\vskip0.2cm\noindent
$x_\e\in \Omega_\e\setminus \mathcal {C}_\e$ and then $x_\e$ converge to a critical point of $\mathcal {R}_\O$
\vskip0.2cm\noindent
or
\vskip0.2cm\noindent
 $x_\e\to0$ and then $x_\e\in \mathcal{C}_\e$.
\end{rem}
In the next lemma we introduce a function $F$ which plays a crucial role in the proof of the main results.
\begin{lem}\label{G9}
Let us consider the function $F:\R^N\setminus\{0\}\to\R$ as
\begin{equation*}
F(y)=
\begin{cases}
\displaystyle\sum_{j=1}^N\frac{\partial\mathcal{R}_\O(0)}{\partial x_j}y_j-\frac{D_N}{4-2N}\frac1{|y|^{2N-4}}~&\mbox{if}~N\geq 3,\\[2mm]
\displaystyle\sum_{j=1}^2\frac{\partial\mathcal{R}_\O(0)}{\partial x_j}y_j~-\frac1\pi\log|y|&\mbox{if}~N=2,
\end{cases}
\end{equation*}
with
\begin{equation}\label{c20}
D_N=\frac2{N\omega_N}\quad\hbox{for }N\ge2.
\end{equation}
Then, the function $F$ has a unique critical point $y_0$ (see \eqref{y0}) which is non-degenerate and satisfies
\begin{equation}\label{det}
\det \big(Hess\big(F(y_0)\big)\big)=(-1)^N\frac{|\nabla \mathcal{R}_\O(0)|^\frac{N(2N-2)}{2N-3}}{D_N^\frac N{2N-3}}(3-2N)\neq 0.
\end{equation}
\end{lem}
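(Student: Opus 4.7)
The proof is a direct computation that can be carried out uniformly in $N\geq 2$ once one observes that, with $D_N=2/(N\omega_N)$, both the power term $-\frac{D_N}{4-2N}|y|^{4-2N}$ (for $N\geq 3$) and the logarithmic term $-\frac{1}{\pi}\log|y|$ (for $N=2$) produce exactly the same gradient contribution $-D_N y/|y|^{2N-2}$. Thus for every $N\geq 2$,
\[
\nabla F(y)=\nabla\mathcal{R}_\O(0)-D_N\,\frac{y}{|y|^{2N-2}}.
\]
This unification lets me handle the two cases in one stroke and avoids repeating essentially identical algebra twice.

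The first step is existence and uniqueness of the critical point. Setting $\nabla F(y)=0$ forces $y$ to be a \emph{positive} scalar multiple of $\nabla\mathcal{R}_\O(0)$, which is nonzero by hypothesis. Writing $y=t\,\nabla\mathcal{R}_\O(0)$ with $t>0$ and comparing magnitudes reduces the critical point equation to $t^{2N-3}=D_N/|\nabla\mathcal{R}_\O(0)|^{2N-2}$. Since $2N-3\geq 1$, this has a unique positive solution, which after inserting the value of $D_N$ matches exactly the expression for $y_0$ in \eqref{y0}.

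The second step is the Hessian computation. Differentiating the unified gradient formula gives
\[
Hess(F(y))_{ij}=\frac{D_N}{|y|^{2N}}\bigl((2N-2)\,y_i y_j-\delta_{ij}\,|y|^2\bigr),
\]
and factoring out $D_N/|y_0|^{2N-2}$ at $y=y_0$ turns the Hessian into $(2N-2)\,vv^T-I$, where $v=y_0/|y_0|$. This is a rank-one perturbation of $-I$, so its spectrum is immediate: eigenvalue $2N-3$ along $v$ and eigenvalue $-1$ on the $(N-1)$-dimensional orthogonal complement. Consequently
\[
\det Hess(F(y_0))=\left(\frac{D_N}{|y_0|^{2N-2}}\right)^{\!N}\!(2N-3)\,(-1)^{N-1}.
\]
In particular, $\det Hess(F(y_0))\neq 0$, which gives the non-degeneracy.

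The final step is to substitute $|y_0|^{2N-3}=D_N/|\nabla\mathcal{R}_\O(0)|$ (from step one) into the prefactor, which collapses it to $|\nabla\mathcal{R}_\O(0)|^{N(2N-2)/(2N-3)}/D_N^{N/(2N-3)}$, and then rewrite $(-1)^{N-1}(2N-3)=(-1)^N(3-2N)$ to recover \eqref{det}. There is no real obstacle here; the only place where a slip could occur is in bookkeeping the exponents of $D_N$ and $|\nabla\mathcal{R}_\O(0)|$ when combining $|y_0|^{(2N-2)N}=|y_0|^{(2N-3)N}\cdot|y_0|^N$, which is why treating both dimensional regimes through the single constant $D_N$ from the outset is essential to keep the algebra transparent.
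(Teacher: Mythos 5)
Your proof is correct and takes essentially the same route as the paper: compute the (unified) gradient, locate the unique critical point $y_0$ as a positive multiple of $\nabla\mathcal{R}_\O(0)$, recognize the Hessian as a rank-one perturbation of a multiple of the identity, read off the spectrum from the $v$-direction and its orthogonal complement, and substitute the value of $|y_0|$ to obtain \eqref{det}. The only cosmetic difference is that the paper factors out $-D_N/|y_0|^{2N-2}$ (so the rank-one matrix has eigenvalues $3-2N$ and $1$), while you factor out $+D_N/|y_0|^{2N-2}$ (eigenvalues $2N-3$ and $-1$); both arrive at the same determinant.
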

\begin{rem}\label{rem-delta}
The non-degeneracy of $y_0$, together with \eqref{det}, implies that there exists $\bar\delta>0$ such that $\overline{B(y_0,\bar\delta)}\subset \R^N\setminus \{0\}$ and $det \big(Hess\big(F(y)\big)\big)\neq 0$ in $B(y_0,\bar\delta)$. If necessary, choosing a smaller $\bar \delta$ we can assume that $|y|\geq c>0$ for every $y\in B(y_0,\bar\delta)$. From now we fix $\delta=\bar\delta$ in the definition of  $\mathcal{C}_\e$.
\end{rem}
\begin{proof}
By a straightforward computation we have that $y_0$ is the unique critical point of $F(y)$. Next we have that, for $N\ge2$, the Hessian matrix of $F(y)$ computed at $y_0=(y_{1,0},..,y_{N,0})$ is given by
$$Hess\big(F(y_0)\big)=-D_N\frac 1{|y_0|^{2N-2}}\underbrace{\left(\delta^i_j+
(2-2N)\frac{y_{i,0}y_{j,0}}{|y_0|^{2}}\right)_{1\leq i,j\leq N}}_{A_{ij}}.
$$
Note that $\lambda_1=3-2N$ is the first eigenvalue of the matrix $A_{ij}$  with associated eigenvector $v_1=y_0$. Next, we observe that any vector of the space $X:=\{x\in \R^N, x\bot y_0\}$ is an eigenvector of the matrix $A_{ij}$ corresponding to the eigenvalue $\lambda=1$, so that $det(A_{ij})=3-2N$. Hence we have that
\begin{equation*}
\begin{split}
 \det Hess\big(F(y_0)\big)=&det\left[-\frac{|\nabla \mathcal{R}_\O(0)|^\frac{2N-2}{2N-3}}{D_N^\frac1{2N-3}}\left(\delta^i_j-(2N-2)\frac{y_{i,0}y_{j,0}}{|y_0|^2}\right)_{1\leq i,j\leq N}\right]\\=
 & (-1)^N\frac{|\nabla \mathcal{R}_\O(0)|^\frac{N(2N-2)}{2N-3}}{D_N^\frac N{2N-3}}
 \displaystyle\Pi^N_{i=1}\la_i=(-1)^N\frac{|\nabla \mathcal{R}_\O(0)|^\frac{N(2N-2)}{2N-3}}{D_N^\frac N{2N-3}}(3-2N)\neq 0,
\end{split}
\end{equation*}
which proves \eqref{det}.
\end{proof}
Now we are in position to prove Theorem \ref{I6}.
 \begin{proof}[\textbf{Proof of Theorem \ref{I6}}]
 Recall that  we assume that $P=0\in \Omega$. If $x_\e\in B(0,r)\setminus B(0,\e)$ is a critical point of $\mathcal{R}_{\O_\e}$, then,  since $B(0,r)\subset\O$ is chosen not containing any critical  point of $\mathcal{R}_\O$, by Proposition \ref{cor}, we necessarly have that $x_\e\to 0$ and the expansion in \eqref{??} holds.  Then by Proposition \ref{cor} and  Remark \ref{Rc7} it is enough to prove the existence and the uniqueness of the critical point $x_\e$ in the ball $\mathcal{C}_\e$.
\vskip 0.2cm
Let us introduce the function $F_\e:B(y_0,\delta)\to\R$ as
$$F_\e(y)=\begin{cases}
\frac1{\e^\frac{N-2}{2N-3}}\mathcal{R}_{\O_\e}\left(\e^\frac{N-2}{2N-3}y\right)~&\mbox{if}~N\geq 3,\\[3mm]
\frac1{r_\e}\mathcal{R}_{\O_\e}(r_\e y)~&\mbox{if}~N=2,
\end{cases}
$$
where $y_0$ is defined in \eqref{y0} and $\delta$ is chosen as in Remark \ref{rem-delta}. Let us show that
$$\nabla F_\e\to \nabla F~\hbox{ in }~ C^1\big(\overline{B(y_0,\delta)}\big),$$
where $F$ is the function defined in Lemma \ref{G9}. Indeed, using Proposition \ref{prnab} we have that
 \begin{equation*}
\nabla F_\e(y)=\nabla \mathcal{R}_{\O_\e}(\e^\frac{N-2}{2N-3}y) =\nabla  \mathcal{R}_\O(0)-\frac2{N\omega_N}\frac y{\left(|y|^2+o(1)\right)^{N-1}}+o(1),
\end{equation*}
which gives the uniform convergence of $\nabla F_\e$ to $\nabla F$ in $B(y_0,\delta)$.

\vskip 0.1cm

Concerning $N=2$ we have that, again by Proposition \ref{prnab}, \eqref{re} and since $\frac {\e^k}{r_\e}=o(1)$ for any $k>0$ by Remark \ref{cre},
\begin{equation*}
\begin{split}
\nabla F_\e(y)=&\nabla \mathcal{R}_{\O_\e}(r_\e y) =\nabla  \mathcal{R}_\O(0)+o(1)-\frac y{\pi|y|^2}\frac{ \ln |y|+\ln r_\e }{r_\e\ln \e}-\underbrace{\frac{\e^2y }{\pi r_\e^3\big(|y|^2+o(1)\big)|y|^2}}_{=o(1)}\\=
&\nabla  \mathcal{R}_\O(0)-\frac y{\pi|y|^2}\left(1+
\frac{ \ln |y|}{\ln r_\e}\right)+o(1),
\end{split}
\end{equation*}
which gives the uniform convergence of $\nabla F_\e$ to $\nabla F$ in $B(y_0,\delta)$ for $N=2$.

\vskip 0.1cm
Let us show the $C^1$ convergence.
By Remark \ref{rem-delta} we can apply Corollary \ref{corC2} with $q=\frac{N-2}{2N-3}$ and a suitable $c>0$  such that, for $N\ge3$
 \begin{equation*}
 \begin{split}
\frac{\partial^2 F_\e(y)}{\partial x_i\partial x_j}=&\e^\frac{N-2}{2N-3}\frac{\partial^2 \mathcal{R}_{\O_\e}(\e^\frac{N-2}{2N-3}y)}{\partial x_i\partial x_j}\\=& o(1)
+\e^\frac{N-2}{2N-3}\frac{\partial^2  \mathcal{R}_{B^c_\e}(\e^\frac{N-2}{2N-3}y)}{\partial x_i\partial x_j} +\underbrace{O(\e)+O\left( \e^\frac{(N-2)^2}{2N-3}\right)}_{=o(1)}\\=
&\underbrace{2C_N(2-N)}_{=-\frac2{N\omega_N}}\left(\frac{\delta^i_j}{|y|^{2N-2}}+(2-2N)\frac{y_iy_j}{|y^{2N}}\right)+o(1),
 \end{split}
\end{equation*}
which gives the claim. In the same way, if $N=2$,  using \eqref{re} and again the Corollary \ref{corC2} with any  $q<1$ and a suitable $c>0$,
\begin{equation*}
 \begin{split}
\frac{\partial^2 F_\e(y)}{\partial x_i\partial x_j}=& r_\e\frac{\partial^2 \mathcal{R}_{\O_\e}(r_\e y)}{\partial x_i\partial x_j}=-\frac1\pi\frac{\ln|y|+\ln r_\e}{r_\e\ln\e}\frac{-2y_iy_j+\delta_i^j|y|^2}{|y|^4}+O\left(\frac{r_\e^{2-2q}}{r_\e|\ln\e|}\right) \\=
&-\frac1\pi\left(1+\frac{\ln|y|}{\ln r_\e}\right)\frac{-2y_iy_j+\delta_i^j|y|^2}{|y|^4}+O\left(\frac{r_\e^{2-2q}}{|\ln r_\e|}\right)\\=& \big(\hbox{since }r_\e\to 0\big)=
-\frac1\pi\frac{-2y_iy_j+\delta_i^j|y|^2}{|y|^4}+o(1),
 \end{split}
\end{equation*}
which gives the claim.
\vskip 0.1cm
Finally, the $C^1$ convergence of $\nabla F_\e$ to $\nabla F$ and \eqref{det} gives that
$$deg\left(\nabla F_\e, B(y_0,\delta),0\right)=deg\left(\nabla F, B(y_0,\delta),0\right)\neq 0,$$
which, jointly with the non-degeneracy of $y_0$, implies the existence and uniqueness of a  critical point $y_\e\in B(y_0,\delta)$ of $F_\e$. By the definition of $F_\e$ this implies  the existence of a unique critical point $x_\e$ for $\mathcal{R}_{\Omega_\e}$ in $\mathcal{C}_\e$.  Finally, by the definition of $\mathcal{C}_\e$, $x_\e\to 0$ and by  \eqref{??} of Proposition \ref{cor}  we get \eqref{11-1-01}.
 Moreover
\[
\begin{split}
 index_{x_\e}\big(\mathcal{R}_{\Omega_\e}(x)\big)&=index_{y_\e}\big(F_\e(y)\big)=
sgn\Big(\det Jac\big(F_\e(y_\e)\big)\Big)\\
&= sgn\Big(\det Jac\big(F(y_0)\big)\Big)=(-1)^{N+1}.
\end{split}\]
\vskip 0.2cm
We end the proof showing that $\mathcal{R}_{\O_\e}(x_\e)\to\mathcal{R}_\O(0)$. By Proposition \ref{Re1} we have that, for $N\ge3$,
\begin{equation}\label{conv1}
\begin{split}
\mathcal{R}_{\O_\e}(x_\e)=&\underbrace{\mathcal{R}_{\O}(x_\e)}_{=\mathcal{R}_{\O}(0)+o(1)}+ \mathcal{R}_{B^c_\e}(x_\e)+
O\underbrace{\left(\frac{\e^{N-2}}{|x_\e|^{N-2}}\right)}_{=O\left(\e^\frac{(N-2)(N-1)}{2N-3}\right)}+O(\e)
\\=
&\mathcal{R}_{\O}(0)+\underbrace{C_N\frac{\e^{N-2}}{\big(|x_\e|^2-\e^2\big)^{N-2}}}_{=O\left(\e^\frac{N-2}{2N-3}\right)}+o(1)=\mathcal{R}_{\O}(0)+o(1),
\end{split}
\end{equation}
which gives the claim. For $N=2$ we have, by Remark \ref{rorem},
\begin{equation}\label{conv2}
\begin{split}
&
\mathcal{R}_{\O_\e}(x_\e)=\underbrace{\mathcal{R}_{\O}(x_\e)}_{=\mathcal{R}_{\O}(0)+o(1)}+\underbrace{\frac1{2\pi}\ln\left(1-\frac{\e^2}{|x_\e|^2}\right)}_{=\frac1{2\pi}\ln\big(1+o(1)\big)\hbox{ by \eqref{4.7-bis}}}+\underbrace{O\left(\left|\frac{\ln |x_\e|}{\ln\e}\right|\right)}_{=o(1)\hbox{ by \eqref{4.7-bis}}}=\mathcal{R}_{\O}(0)+o(1),
\end{split}
\end{equation}
which gives the claim.
\end{proof}
\vskip0.2cm

Next we prove Corollary \ref{th1-2dd}.
\vskip0.2cm
\begin{proof}[\bf{Proof of  Corollary \ref{th1-2dd}}]
\vskip 0.2cm
Set ${\mathcal B}=\Big\{x\in\O \hbox{ such that }\nabla \mathcal{R}_\O(x)=0\Big\}$.
Since $\nabla \mathcal{R}_\O(P)\ne0$ we get that ${\mathcal B}\bigcap B(P,r)=\varnothing$ for any small fixed $r>0$.
Now we write $\Omega_\varepsilon=\mathcal{C}_1\bigcup \mathcal{C}_2 \bigcup \Big(B(P,r)\backslash B(P,\e)\Big)$ with $$\mathcal{C}_1=\{x,~dist~(x,{\mathcal B})\leq r\}~\mbox{and}~\mathcal{C}_2:=\Omega_\varepsilon\backslash \big(\mathcal{C}_1\cup B(P,r)\big),$$
where $r$ is such that $det\big(Hess\big(\mathcal {R}_\Omega(x)\big)\big)\neq 0$ in $\mathcal{C}_1$.

By Proposition \ref{prnab} and Corollary \ref{corC2} we have that $\mathcal{R}_{\O_\e}\to\mathcal{R}_\O$ in $\mathcal{C}_1$ and so by the choice of $r$, the non-degeneracy of the critical points of $\mathcal{R}_\O$ and the $C^2$ convergence of $\mathcal{R}_{\Omega_\e}$ to $\mathcal{R}_\O$ in $\mathcal{C}_1$ we have
$$\sharp\big\{\mbox{critical points of $\mathcal{R}_{\O_\e}(x)$ in $\mathcal{C}_1$}\big\}=\sharp\{{\mathcal B}\},$$
while the $C^1$ convergence of $\mathcal{R}_{\Omega_\e}$ to $\mathcal{R}_\O$ in $\overline{\Omega_\e\setminus B(P,r)}$ gives
$$\sharp\big\{\mbox{critical points of $\mathcal{R}_{\O_\e}(x)$ in $\mathcal{C}_2$}\big\}=0.$$
Finally from Theorem \ref{I6}, we get that
$$\sharp\big\{\mbox{critical points of $\mathcal{R}_{\O_\e}(x)$ in $B(P,r)\backslash B(P,\e)$}\big\}=1,$$
which proves the claim.
\end{proof}
\section{The case $\nabla\mathcal{R}_\O(P)=0$, proof of  Theorem \ref{th1-2}.}\label{s6}
As in the previous sections we assume that $P=0$. We will follow the line of the proof of Theorem \ref{I6}. So we start with
a necessary condition satisfied by the critical points of $\mathcal{R}_{\O_\e}(x)$.
\begin{prop}\label{d1}
If $0$ is a non-degenerate critical point for  $ \mathcal{R}_\O(x) $ and $x_\e$ is a critical point of $\mathcal{R}_{\O_\e}(x)$, then for $\e\to0$ we have that either
  \begin{equation*}
 x_\e\to x_0\ne0\hbox{ with }\nabla \mathcal{R}_\O(x_0)=0
 \end{equation*}
 or
\begin{equation}\label{d11-16-01}
x_\varepsilon=
\begin{cases}
\left(\frac{ \big(2+o(1)\big)}{N\omega_N\la}\right)^\frac{1}{2N-2}
\varepsilon^{\frac{N-2}{2N-2}}v~&\mbox{if}~N\geq 3,\\[3mm]
\widehat r_\e\Big(1+o(1)\Big)v_l~&\mbox{if}~N=2,
\end{cases}
\end{equation}
where $\lambda$ is a positive eigenvalue of the Hessian matrix $Hess\big(\mathcal{R}_\O(0)\big)$,  $v$ is an associated eigenvector with $|v|=1$ and $\widehat r_\e$ is the
 unique solution of
 \begin{equation}\label{d11-06-01b}
  r^2-\frac{\ln r}{\lambda \pi \ln \e}=0~~\mbox{in}~(0,\infty).
 \end{equation}
\end{prop}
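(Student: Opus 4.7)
The plan is to mimic the argument of Proposition \ref{cor}, substituting the first-order Taylor expansion of $\nabla\mathcal{R}_\O$ at $0$ (which is zero) by the second-order one. The starting point is again the gradient expansion \eqref{super} of Proposition \ref{prnab}, evaluated at the critical point $x_\e$ of $\mathcal{R}_{\O_\e}$. If $x_\e\to x_0\ne 0$, passing to the limit in \eqref{super} on compact sets away from $0$ gives $\nabla\mathcal{R}_\O(x_0)=0$, so we may restrict to the case $x_\e\to 0$. Write $x_\e=\rho_\e v_\e$ with $|v_\e|=1$ and use
\[
\nabla\mathcal{R}_\O(x_\e)=H\,x_\e+O(|x_\e|^2),\qquad H:=\mathrm{Hess}\bigl(\mathcal{R}_\O(0)\bigr),
\]
which is legitimate since $\nabla\mathcal{R}_\O(0)=0$ and $H$ is nondegenerate.

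For $N\ge 3$, substituting into \eqref{super} and using $\nabla\mathcal{R}_{B^c_\e}(x)=-\frac{2\e^{N-2}}{N\omega_N}\frac{x}{(|x|^2-\e^2)^{N-1}}$, the equation $\nabla\mathcal{R}_{\O_\e}(x_\e)=0$, after dividing by $\rho_\e$, reads
\[
Hv_\e=\frac{2}{N\omega_N}\,\frac{\e^{N-2}}{(\rho_\e^2-\e^2)^{N-1}}\,v_\e+O(\rho_\e)+O\!\left(\frac{\e}{\rho_\e}\right)+O\!\left(\frac{\e^{N-2}}{\rho_\e^{N}}\right).
\]
I first rule out both $\rho_\e/\e\to\infty$ being false and the remainder dominating: since $|Hv_\e|$ is bounded, the leading right-hand side must stay bounded, forcing $\e^{N-2}/\rho_\e^{2N-2}=O(1)$, i.e. $\rho_\e\gtrsim\e^{(N-2)/(2N-2)}$. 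Conversely, unless $\e^{N-2}/\rho_\e^{2N-2}$ stays bounded away from zero, $v_\e$ would be asymptotically in $\ker H$, contradicting nondegeneracy of $H$. Hence $\rho_\e\sim\e^{(N-2)/(2N-2)}$ and then $\e/\rho_\e$, $\rho_\e$ and $\e^{N-2}/\rho_\e^N$ all tend to $0$. Up to subsequences $v_\e\to v$ and $\e^{N-2}/\rho_\e^{2N-2}\to \mu>0$, so $Hv=\frac{2\mu}{N\omega_N}v$. Setting $\lambda:=\frac{2\mu}{N\omega_N}>0$, we obtain a positive eigenvalue of $H$ with eigenvector $v$, and solving for $\rho_\e$ yields the first line of \eqref{d11-16-01}. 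As the subsequential limit is determined by the pair $(\lambda,v)$, the full sequence converges.

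For $N=2$, I proceed analogously starting from the $N=2$ version of \eqref{super} and $\nabla\mathcal{R}_{B^c_\e}(x)=-\frac{1}{\pi}\frac{x}{|x|^2-\e^2}$. Combining the singular terms gives
\[
Hv_\e=\frac{1}{\pi}\,\frac{\e^2}{\rho_\e^2(\rho_\e^2-\e^2)}\,v_\e+\frac{1}{\pi}\,\frac{\ln\rho_\e}{\rho_\e^2\ln\e}\,v_\e+O(\rho_\e)+O\!\left(\frac{1}{\rho_\e^2|\ln\e|}\right).
\]
Using Remark \ref{cre}-type arguments applied to $\widehat r_\e$ (which lies in $(|\ln\e|^{-1},|\ln\e|^{-1/2})$ by the same monotonicity), one checks that $\e^2/\rho_\e^2\to 0$ whenever $\rho_\e\gtrsim\widehat r_\e$, so the first term is negligible, while the $O(1/(\rho_\e^2|\ln\e|))$ error is of order $1/|\ln\rho_\e|\to0$. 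As before the boundedness of $|Hv_\e|$ forces $\frac{\ln\rho_\e}{\rho_\e^2\ln\e}=O(1)$, and $H$ nondegenerate rules out its vanishing. Passing to a subsequence, $v_\e\to v$ and $\frac{\ln\rho_\e}{\rho_\e^2\ln\e}\to\lambda\pi>0$, i.e. $Hv=\lambda v$ with $\lambda>0$. Then $\rho_\e$ satisfies asymptotically the equation defining $\widehat r_\e$, and the monotonicity of $r\mapsto r^2-\frac{\ln r}{\lambda\pi\ln\e}$ (as in Remark \ref{cre}) yields $\rho_\e=\widehat r_\e(1+o(1))$, proving the second line of \eqref{d11-16-01}.

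The delicate point, and the expected main obstacle, is excluding intermediate scales between the singular balance and the limiting eigenvalue equation: one must guarantee that $\rho_\e$ does not drift to a scale where the remainder terms in \eqref{super} (in particular $O(\e)$ for $N\ge 3$ and $O(\frac{1}{|x||\ln\e|})$ for $N=2$) become comparable with the main terms. This is handled by the a priori lower bound $\rho_\e\gtrsim\e^{(N-2)/(2N-2)}$ (resp.\ $\rho_\e\gtrsim\widehat r_\e$) extracted from the boundedness of $|Hv_\e|$, together with the nondegeneracy of $H$ to guarantee that the limit eigenvector direction carries a strictly positive eigenvalue.
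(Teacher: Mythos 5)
Your proof is correct and follows essentially the same path as the paper's: both start from the gradient expansion of Proposition \ref{prnab}, Taylor-expand $\nabla\mathcal{R}_\O$ to second order at $0$ (which is legitimate since $\nabla\mathcal{R}_\O(0)=0$), reduce to the balance between $H x_\e$ and the singular term from $\nabla\mathcal{R}_{B_\e^c}$, and then invoke boundedness plus nondegeneracy of $H$ to identify the scale of $|x_\e|$ and the positive eigenvalue/eigenvector pair. One minor slip: you quote the range $\widehat r_\e\in(|\ln\e|^{-1},|\ln\e|^{-1/2})$, which is the range for $r_\e$ from Remark \ref{cre}; the correct range for $\widehat r_\e$ (stated right after Proposition \ref{d1}) is $\big((\lambda\pi|\ln\e|)^{-1/2},(\lambda\pi|\ln\e|)^{-1/4}\big)$, but this does not affect your argument since all you use is $\widehat r_\e\gg\e$ and $|\ln\widehat r_\e|\to\infty$.
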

 \begin{rem}
It is immediate to check that \eqref{d11-06-01b} admits only one solution $\widehat r_\e$ verifying $\widehat r_\e\in \Big(\frac{1}{(\lambda \pi|\ln \e|)^{\frac{1}{2}}}, \frac{1}{(\lambda \pi|\ln \e|)^{\frac{1}{4}}}\Big)$. This can be seen observing that
 $r_{\e,1}=\frac{1}{(\lambda \pi|\ln \e|)^{\frac{1}{2}}}$ and $r_{\e,2}= \frac{1}{(\lambda \pi|\ln \e|)^{\frac{1}{4}}}$ satisfy
$$h(r_{\e,1})= \frac{1-\ln \sqrt{\lambda \pi|\ln \e|}}{\lambda \pi|\ln \e|}<0,~~h(r_{\e,2})= \frac{1}{\sqrt{\lambda \pi|\ln \e|}} -\frac{\ln (\lambda \pi|\ln \e|)}{4\lambda \pi|\ln \e|}>0.$$
 \end{rem}
 \begin{rem}
Unlike when $\nabla\mathcal{R}_\O(P)\ne0$, here in general we do not have the uniqueness of the limit point of $x_\e$. It depends on the simplicity of the eigenvalue $\la$. This will play a crucial role in the next results.
 \end{rem}

\begin{proof}[Proof of Proposition \ref{d1}]
The first assertion follows arguing as in Proposition \ref{cor} (see \eqref{c3} or \eqref{super}).
On the other hand, if $x_\e\to 0$, using \eqref{c4} and \eqref{gradiente-robin} for $N\ge3$ and \eqref{4.7-bis}-\eqref{4.7-ter} for $N=2$, we get again
$$\frac{\e}{|x_\e|}\to 0.$$
Furthermore from Proposition \ref{prnab} and  $\nabla \mathcal{R}_\O(0)=0$, we get
\begin{equation}\label{11-16-01}
0=\sum^N_{j=1}\left(\frac{\partial^2 \mathcal{R}_\O(0)}{\partial x_i\partial x_j}+o(1)\right)x_{j,\e}+
 \begin{cases}
 -\frac{2x_{i,\e}\e^{N-2}}{N\omega_N |x_{\e}|^{2N-2}}\Big(1+o(1)\Big) &~\mbox{for}~N\geq 3,\\[4mm]
 \frac{x_{i,\e}\ln |x_\e|}{\pi |x_{\e}|^2|\ln \e|}\Big(1+o(1)\Big) &~\mbox{for}~N=2.
\end{cases}
\end{equation}
From \eqref{11-16-01}, we immediately get that, as $\e\to0$,
\begin{equation*}
\begin{cases}
 \frac{2 \e^{N-2}}{N\omega_N |x_{\e}|^{2N-2}} \to\lambda ~&\mbox{for}~N\geq 3,\\[2mm]
 -\frac{\ln |x_\e|}{\pi |x_{\e}|^2|\ln \e|}\to\lambda ~&\mbox{for}~N=2,
\end{cases}\end{equation*}
where  $\lambda$ is a positive eigenvalue of the hessian matrix $Hess\big(\mathcal{R}_\O(0)\big)$.
Hence \eqref{d11-16-01} follows by dividing \eqref{11-16-01} by $|x_\e|$ and passing to the limit.
\end{proof}
Analogously  to the previous section let us introduce ``the limit function'' of a suitable rescaling of  $\mathcal{R}_{\O_\e}$.
\begin{lem}
Let us consider the function $\widehat F:\R^N\setminus\{0\}\to\R$ as
\begin{equation}\label{m1}
\widehat F(y)=
\begin{cases}
\frac12\displaystyle\sum^N_{i,j=1}\frac{\partial^2 \mathcal{R}_\O(0)}{\partial x_i\partial x_j}y_iy_j -\frac{D_N} {(4-2N)|y|^{2N-4}}\,\,\,~&\mbox{for}~N\geq 3,\\[4mm]
\frac12\displaystyle\sum^2_{i,j=1}\frac{\partial^2 \mathcal{R}_\O(0)}{\partial x_i\partial x_j}y_iy_j -D_2 \ln |y| \,\,\,~&\mbox{for}~N=2,
\end{cases}\end{equation}
where $D_N$ ($N\geq 2$) is the same as in \eqref{c20}. Suppose that $Hess\big(\mathcal{R}_\O(0)\big)$ has $m\le N$ positive  eigenvalues
$0<\lambda_1\leq \lambda_{2} \leq \cdots \leq \lambda_m$. Then we have that
\begin{equation}\label{B26}
\hbox{if  $\nabla\widehat{F}(\bar{y})=0$  then } \bar{y}=\bar{y}^{(l)}=
\left(\frac{D_N}{ \lambda_l}\right)^{\frac{1}{2N-2}}v^{(l)}~\mbox{for some}~l\in\{1,\cdots,m\},
\end{equation}
where $v^{(l)}$ is an eigenvector of the matrix $\left(\frac{\partial^2 \mathcal{R}_\O(0)}{\partial x_i\partial x_j}\right)_{i,j=1,..N}$ associated to  $\lambda_l$ such that $|v^{(l)}|=1$.\\
Moreover it holds
\begin{equation}\label{B27}
\det \big(Hess\left(\widehat{F}\big(\bar y^{(l)}\big)\right)\big)=(2N-2)\la_l\prod_{s=1\atop s\ne l}^N(\lambda_s-\lambda_l).
\end{equation}
\end{lem}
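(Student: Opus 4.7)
The approach is to diagonalize the Hessian of $\mathcal{R}_\O$ at $0$ and reduce the problem to a fully explicit computation. Since $\mathrm{Hess}(\mathcal{R}_\O(0))$ is symmetric, there is an orthogonal matrix $U$ such that, writing $y=Uz$, the quadratic form becomes $\sum_{j=1}^N\lambda_j z_j^2$, where $\lambda_1\le\cdots\le\lambda_N$ are the eigenvalues (with $\lambda_l>0$ for $l=1,\dots,m$). The radial part of $\widehat F$ is rotation invariant, so in the new variables
\[
\widetilde F(z):=\widehat F(Uz)=\frac12\sum_{j=1}^N\lambda_j z_j^2+g(|z|),
\]
with $g(r)=-\frac{D_N}{4-2N}r^{-(2N-4)}$ for $N\ge 3$ and $g(r)=-D_2\log r$ for $N=2$. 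In both cases a direct computation gives $g'(r)/r=-D_N/r^{2N-2}$ (the constant $D_N=\frac{2}{N\omega_N}$ was chosen precisely to unify the two regimes), so
\[
\frac{\partial\widetilde F}{\partial z_i}(z)=z_i\Bigl(\lambda_i-\frac{D_N}{|z|^{2N-2}}\Bigr).
\]

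For part \eqref{B26}, I would set this gradient to zero: for each $i$, either $z_i=0$ or $\lambda_i=D_N/|z|^{2N-2}$. Since $\bar z\neq 0$ and $D_N>0$, the second condition must hold for at least one index, whose corresponding eigenvalue is therefore positive. Denoting this common value by $\lambda_l$ with $l\in\{1,\dots,m\}$, we immediately get $|\bar z|=(D_N/\lambda_l)^{1/(2N-2)}$, and the nonzero components of $\bar z$ are confined to indices satisfying $\lambda_i=\lambda_l$. When $\lambda_l$ is simple, that index is uniquely $i=l$, so $\bar z=\pm(D_N/\lambda_l)^{1/(2N-2)}e_l$; transporting back through $U$ yields $\bar y=\bar y^{(l)}=(D_N/\lambda_l)^{1/(2N-2)}v^{(l)}$ with $v^{(l)}$ a unit eigenvector of $\mathrm{Hess}(\mathcal{R}_\O(0))$ associated to $\lambda_l$.

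For part \eqref{B27}, a second differentiation produces
\[
\frac{\partial^2\widetilde F}{\partial z_j\partial z_i}(z)=\delta_{ij}\Bigl(\lambda_i-\frac{D_N}{|z|^{2N-2}}\Bigr)+(2N-2)D_N\,\frac{z_iz_j}{|z|^{2N}}.
\]
Evaluating at $\bar z^{(l)}=(D_N/\lambda_l)^{1/(2N-2)}e_l$, the first parenthesis becomes $\lambda_i-\lambda_l$, while the quadratic piece collapses to $(2N-2)\lambda_l\,\delta_{il}\delta_{jl}$ because $|\bar z^{(l)}|^{-2}\bar z^{(l)}_i\bar z^{(l)}_j=\delta_{il}\delta_{jl}$. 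Thus the matrix $\mathrm{Hess}(\widetilde F(\bar z^{(l)}))$ is diagonal, with the $(l,l)$ entry equal to $(2N-2)\lambda_l$ and all other diagonal entries equal to $\lambda_i-\lambda_l$ for $i\ne l$. Its determinant is $(2N-2)\lambda_l\prod_{s\ne l}(\lambda_s-\lambda_l)$, and this equals $\det\mathrm{Hess}(\widehat F(\bar y^{(l)}))$ because $U$ is orthogonal and Hessian determinants are invariant under conjugation by orthogonal matrices.

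There is no substantive obstacle here; the whole argument is mechanical once one diagonalizes. The only points requiring attention are the uniform identity $g'(r)/r=-D_N/r^{2N-2}$ (which makes the $N=2$ and $N\ge 3$ cases fall under a single formula) and the simplicity hypothesis on $\lambda_l$, which is exactly what prevents the critical set from forming a continuous sphere inside a higher-dimensional $\lambda_l$-eigenspace.
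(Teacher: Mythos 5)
Your proof is correct and follows essentially the same path as the paper: diagonalize the Hessian of $\mathcal{R}_\O$ at $0$ by an orthogonal change of variables, read off the critical points from the resulting decoupled gradient equations, and evaluate the Hessian at $\bar z^{(l)}$. The only cosmetic difference is that you diagonalize before computing $\nabla\widehat F$, whereas the paper first writes the gradient \eqref{F1} in the original coordinates and deduces directly that any critical $\bar y$ must be an eigenvector with eigenvalue $D_N/|\bar y|^{2N-2}>0$, then diagonalizes only for the Hessian computation; and for \eqref{B27} the paper computes the spectrum of a diagonal-plus-rank-one matrix $\mathbf{M_0}$, while you observe the Hessian at $\bar z^{(l)}=(D_N/\lambda_l)^{1/(2N-2)}e_l$ is already diagonal, which is marginally more immediate. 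The unifying identity $g'(r)/r=-D_N/r^{2N-2}$ that you make explicit is exactly what allows the paper to treat $N=2$ and $N\ge 3$ by a single formula in \eqref{F1}.
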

\begin{rem}\label{rsim}
If the eigenvalue $\la_l$ is {\bf simple} we get two corresponding eigenvector $v_\pm^{(l)}$ with $|v_\pm^{(l)}|=1$ and then two critical points $\bar{y}_\pm^{(l)}$ in \eqref{B26}. Moreover by \eqref{B27} $\bar{y}_\pm^{(l)}$ are  non-degenerate critical points
and then it is possible to select $\delta^{(l)}>0$ such that in $B\big(\bar{y}_\pm^{(l)},\delta^{(l)}\big)$ we have $det \big(Hess\big(\widehat{F}\big(y\big)\big)\big)\neq 0$.
\end{rem}

\begin{proof}
Observe that  $\nabla\widehat{F}(y)$ is given by
\begin{equation}\label{F1}
\frac{\partial\widehat{F}(y)}{\partial y_i}=\sum^N_{j=1}\frac{\partial^2 \mathcal{R}_\O(0)}{\partial x_i\partial x_j}y_j -\frac{D_Ny_i} {|y|^{2N-2}}\,\,\,~\mbox{for}~N\geq 2
\end{equation}
and then if $\nabla\widehat{F}(y)=0$ we immediately get that $\frac{D_N} {|y|^{2N-2}}=\la_l$ for some $l\in\{1,\cdots,m\}$ proving \eqref{B26}.

The claim \eqref{B27} will be proved by diagonalizing the matrix $Hess\big(\mathcal{R}_\O(0)\big)$. Here we consider the case $N\ge3$ ($N=2$ can be handled  in the same way). Let $\textbf{P}$ the orthogonal matrix such that
$$\textbf{P}^THess\big(\mathcal{R}_\O(0)\big)\textbf{P}=diag(\la_1,\cdots,\la_N).$$
Taking $Z=\textbf{P}^T(y)$ we get that the system $\nabla\widehat{F}(y)=0$ becomes
\begin{equation*}
\la_iZ_i-D_N\frac{Z_i}{|Z|^{2N-2}}=0,~~\mbox{for}~N\geq 2\hbox{ and }i=1,\cdots,N.
\end{equation*}
Note that these zeros are critical point of the function
\begin{equation}\label{m2}
T(y)=\frac12\sum^N_{j=1}\lambda_jy_j^2-\frac{D_N} {(4-2N)|y|^{2N-4}}\,\,\,~\mbox{for}~N\geq 3.
\end{equation}
Next step is the computation of the determinant of the hessian matrix of  $\widehat F$. We know
\begin{equation*}
\begin{split}
\det Hess\left(\widehat{F}\big(y^{(l)}\big)\right)=&
\det \left( \textbf{P}^THess\Big(T
\big(y^{(l)}\big) \Big)\textbf{P}\right)= \det  \left( Hess\Big(T
\big(y^{(l)}\big) \Big) \right)\\=&
\det \left( diag\big(\lambda_1,\cdots,\lambda_N\big)+
\Big(-\frac{D_N}{|y^{(l)}|^{2N-2}}\delta_{ik}+ (2N-2) \frac{D_Ny^{(l)}_iy^{(l)}_k}{|y^{(l)}|^{2N}} \Big)_{1\leq i,k\leq N}
\right)\\=&
\det \left(\underbrace{diag\big(\lambda_1-\lambda_l,\cdots,\lambda_N-\lambda_l\big)+
(2N-2) \lambda_l  \Big(  \frac{y^{(l)}_iy^{(l)}_k}{|y^{(l)}|^{2}} \Big)_{1\leq i,k\leq N}}_{:=\bf{M_0}}
\right).
\end{split}
\end{equation*}
By the basic theory of linear algebra, we can find that the eigenvalues of
$\bf{M_0}$ are $\lambda_s-\lambda_l$ for $s=1,\cdots,l-1,l+1,\cdots,N$ and $(2N-2)\lambda_l$.
Hence we have
\begin{equation*}
\det Hess\left(\widehat{F}\big(y^{(l)}\big)\right)=(2N-2)\la_l\prod_{s=1\atop s\ne l}^N(\lambda_s-\lambda_l).
\end{equation*}
\end{proof}
Assume that $\la_l$ is a {\em simple} eigenvalue. Then, following the notations of Remark \ref{rsim},
analogously to the previous section we define
 $$\mathcal{C}_{\e,\pm}^{(l)}=
\begin{cases}
B\left(y_\pm^{(l)}\e^\frac{N-2}{2N-2},\d^{(l)}\e^\frac{N-2}{2N-2}\right)&\mbox{if}\ N\ge3,\\[2mm]
B(y_\pm^{(l)}\widehat r_\e,\d^{(l)}\widehat r_\e)&\mbox{if}\ N=2,
\end{cases}$$
and
\begin{equation*}
\widehat{\mathcal{C}_\e}=\underset{l}\cup\ \mathcal{C}_{\e,\pm}^{(l)}.
\end{equation*}
Proposition \ref{d1} implies that, under the assumption that all the eigenvalues of $\left(\frac{\partial^2 \mathcal{R}_\O(0)}{\partial x_i\partial x_j}\right)_{i,j=1,..N}$ are positive, the critical points $x_\e$  verify either
\vskip0.2cm\noindent
$x_\e\in \Omega_\e\setminus\widehat{\mathcal{C}_\e}$ and then $x_\e$ converge to a critical point of $\mathcal {R}_\O$
\vskip0.2cm\noindent
or
\vskip0.2cm\noindent
 $x_\e\to0$ and then $x_\e\in\widehat{\mathcal{C}_\e}$.

\begin{proof}[\textbf{Proof of Theorem \ref{th1-2}}]
 Recall that  we assume that $P=0\in \Omega$. As in the proof of Theorem \ref{I6}, if $x_\e\in B(0,r)\setminus B(0,\e)$ is a critical point of $\mathcal{R}_{\O_\e}$, then,  since $B(0,r)\subset\O$ is chosen not containing any critical  point of $\mathcal{R}_\O$ different from $0$, by Proposition \ref{d1} we necessarily have that $x_\e\to 0$.  Following the notation of Remark \ref{rsim}, denote by $\bar{y}_+^{(l)}$ the critical point of $\widehat{F}$ associated to $\bar v_+^{(l)}$ and let us show the existence of $one$ critical point of $\mathcal{R}_{\O_\e}$ in $B(0,r)$ which by \eqref{d11-16-01} verifies \eqref{11-1-02}. The same will hold for $\bar{y}_-^{(l)}$, giving the proof of the first claim of Theorem  \ref{th1-2}.
\vskip 0.2cm
Let us introduce the function $\widehat{F}_\e:B\left(\bar{y}_+^{(l)},\delta^{(l)}\right)\to\R$ as
$$\widehat{F}_{\varepsilon}(y)=
\begin{cases}
\frac{1}{\e^\frac{N-2}{2N-2}} \mathcal{R}_{\O_\e}\left(\e^\frac{N-2}{2N-2}y\right)~&\mbox{if}~N\geq 3,\\[2mm]
\frac{1}{\widehat r_\e} \mathcal{R}_{\O_\e}\left(\widehat r_\e y\right)~&\mbox{if}~N=2.
\end{cases}
$$
Furthermore, using Corollary \ref{corC2} with $q=\frac{N-2}{2N-2}$ for $N\ge3$ and $q<1$ for $N=2$ and arguing as in the proof of Theorem \ref{I6} we get that
$$\nabla \widehat{F}_\e\to \nabla \widehat{F}~\mbox{in} ~C^1\left(B\left(\bar{y}_+^{(l)},\delta^{(l)}\right)\right).$$
Since $\bar y_+^{(l)}$ is a  non-degenerate critical point of  $\widehat{F}$ then $\nabla\widehat{F}_\varepsilon(y)$ admits a unique critical point $y^{(l)}_{\e,+}\to y^{(l)}_+$ in $B\left(\bar{y}_+^{(l)},\delta^{(l)}\right)$ and also $y^{(l)}_{\e,+}$ is a non-degenerate critical point of $\mathcal{R}_{\O_\e}(0)$.

Finally, \eqref{11-1-02} follows by \eqref{d11-16-01} and \eqref{Irob} can be proved repeating step by step the proof of \eqref{conv1}--\eqref{conv2} in Theorem \ref{I6}.
\vskip 0.2cm\noindent
Next let us assume that all the eigenvalues of the Hessian matrix $Hess\big(\mathcal{R}_\O(0)\big)$ are simple. Again by Proposition \ref{d1}, Remark \ref{rsim} and the discussion  above, we have that if  $x_\e$ is a critical point belonging to $B(0,r)\setminus B(0,\e)$  then $x_\e\in\widehat{\mathcal{C}_\e}$. The simplicity of the eigenvalue and the previous claim prove \eqref{In}.
\end{proof}

\begin{proof}[Proof of Corollary \ref{Ic}]
In this case the Robin function $\mathcal{R}_\O(x)$ has only one critical point $P=0$ which is a non-degenerate minimum point (see \cite{g2,ct}). This means that $Hess\big(\mathcal{R}_\O(0)\big)$ has $N$ positive eigenvalues
and, if they are simple, the Robin function $\mathcal{R}_{\O\setminus B(0,\e)}(x)$ has exactly $2N$ critical points for $\e$ small enough. So the claim follows by Theorems \ref{I6} and \ref{th1-2}.
\end{proof}
 \vskip 0.2cm
\begin{rem}[What about multiple eigenvalues of $Hess\big(\mathcal{R}_\O(0)\big)$?]
~\vskip 0.2cm
In this case we are not able to give a complete description of the critical points of
$\mathcal{R}_{\O_\e}$.
Suppose that we have a multiple eigenvalue satisfying
$$\lambda_j=\lambda_{j+1}=\cdots=\lambda_{j+k},\quad j,k\ge1.$$
In this case, the function $T(y)$ defined in \eqref{m2} admits a manifold of critical points given by
$$S^k=\left\{x_j^2+\cdots+x_{j+k}^2=\left(\frac{D_N}{ \lambda_j}\right)^{\frac{1}{2N-2}}\right\}.$$
By \eqref{B27} we have that the Hessian matrix is non degenerate in the directions different from $x_j,\cdots,x_{j+k}$ and so it is a $non-degenerate$ manifold of critical points for $N$ in the sense of Morse-Bott theory. However, even in this explicit case, it seems hard to get existence results of critical points for $T(y)$ under non radial small perturbations. Of course no possible information can be deduced about the non-degeneracy.

Without additional assumptions,  as pointed out in the introduction,  it is even possible to have {\em infinitely many} solutions (the radial case). For these reasons the case of multiple eigenvalues is unclear.
\end{rem}

Before the close of this section, we give a  partial result when $\O$ is a symmetric domain.
\begin{teo}\label{th1-2a}
Let $\O\subset\R^N$ be convex and symmetric with respect $x_1,\cdots,x_N$ for $N\ge2$.
Then we have that $\mathcal{R}_{\O_\e}(x)$
has at least $2N$ critical points which are located on the coordinate axis, i.e.
$$x_{i,\e}^\pm= \Big(\underbrace{0,\cdots,0}_{i-1},\pm \big(\frac{2}{N\omega_N \lambda_i}+o(1)\big)^{\frac{1}{N-2}}\e^{\frac{N-2}{2N-2}},\underbrace{0,\cdots,0}_{N-i}\Big)
~~~~\mbox{for}~~~~i=1,\cdots,N.$$
Moreover we have $\frac{\partial^2 \mathcal{R}_{\O_\e}(x_{i,\e}^\pm)}{\partial x_i^2}\neq 0$ for $i=1,\cdots,N$.
\end{teo}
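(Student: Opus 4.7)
The plan is to exploit the coordinate symmetries of $\O_\e$ to produce $2N$ critical points by one-dimensional minimization on each coordinate axis, and then to identify their location and the sign of $\frac{\partial^2\mathcal{R}_{\O_\e}}{\partial x_i^2}$ at those points by invoking Proposition \ref{d1} and Corollary \ref{corC2}. Since $\O$ is convex and symmetric with respect to every hyperplane $\{x_i=0\}$, the results \cite{cf,ct} recalled in the proof of Corollary \ref{Ic} give that $0$ is the unique critical point of $\mathcal{R}_\O$, and the non-degeneracy results of \cite{cf,g2} (applicable under our symmetry hypothesis) give that $0$ is a non-degenerate minimum. Combined with the reflection symmetries $x_i\mapsto -x_i$, this forces $Hess\big(\mathcal{R}_\O(0)\big)$ to be diagonal in the canonical basis, with strictly positive eigenvalues $\lambda_i=\frac{\partial^2\mathcal{R}_\O(0)}{\partial x_i^2}$ and eigenvectors $e_i$. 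Because $P=0$, all these symmetries are inherited by $\O_\e$ and by $\mathcal{R}_{\O_\e}$.

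Fix $i\in\{1,\ldots,N\}$ and put $r_i:=\sup\{t>0:te_i\in\O\}$. The smooth function $g(t):=\mathcal{R}_{\O_\e}(te_i)$ on the interval $(\e,r_i)$ blows up at both endpoints (at $t=\e$ along $\partial B(0,\e)$ and at $t=r_i$ along $\partial\O$), so it attains an interior minimum at some $t_{i,\e}^+\in(\e,r_i)$. Set $x_{i,\e}^+:=t_{i,\e}^+ e_i$, and define $x_{i,\e}^-$ analogously on the opposite half-axis. Since $\mathcal{R}_{\O_\e}$ is invariant under each reflection $x_j\mapsto -x_j$ with $j\neq i$, differentiating that identity and setting $x_j=0$ yields $\frac{\partial\mathcal{R}_{\O_\e}}{\partial x_j}\equiv 0$ along the $i$-th axis $A_i$. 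Combined with $g'(t_{i,\e}^\pm)=0$, this gives $\nabla\mathcal{R}_{\O_\e}(x_{i,\e}^\pm)=0$, producing $2N$ distinct critical points.

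Each $x_{i,\e}^\pm$ is then subject to Proposition \ref{d1}: it either converges to a nonzero critical point of $\mathcal{R}_\O$ (excluded, as $0$ is the only one) or it converges to $0$ obeying the rate \eqref{d11-16-01}. Because $x_{i,\e}^\pm$ lies on $A_i$, the unit eigenvector in \eqref{d11-16-01} must be $\pm e_i$ and the associated eigenvalue is $\lambda_i$, which yields the stated asymptotic for $N\geq 3$ (and its logarithmic analogue driven by $\widehat r_{\e,i}$ from \eqref{d11-06-01b} for $N=2$).

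For the non-degeneracy assertion I would apply Corollary \ref{corC2} with $j=i$, $q=(N-2)/(2N-2)$ and $x=x_{i,\e}^\pm$. All other coordinates of $x_{i,\e}^\pm$ vanish, so $|x|^2=x_i^2$ and the factor $\bigl(2(1-N)x_i^2+|x|^2\bigr)|x|^{-2N}$ reduces to $(3-2N)|x_i|^{2-2N}$. Substituting $|x_i|^{2N-2}=\frac{2}{N\omega_N\lambda_i}\e^{N-2}(1+o(1))$ from the previous paragraph, the leading correction from Corollary \ref{corC2} becomes $(2N-3)\lambda_i(1+o(1))$, and combined with $\frac{\partial^2\mathcal{R}_\O(x_{i,\e}^\pm)}{\partial x_i^2}=\lambda_i+o(1)$ this produces $\frac{\partial^2\mathcal{R}_{\O_\e}(x_{i,\e}^\pm)}{\partial x_i^2}=(2N-2)\lambda_i+o(1)\neq 0$. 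The main obstacle is checking that the remaining $O(\cdot)$ terms in Corollary \ref{corC2} are genuinely of lower order at this scale: this reduces to a direct estimate using $|x_i|\sim\e^{(N-2)/(2N-2)}$ and $\mathrm{dist}(x_{i,\e}^\pm,\partial B_\e)\sim|x_i|$, and the corresponding $N=2$ computation uses the logarithmic terms in Corollary \ref{corC2} in the same way.
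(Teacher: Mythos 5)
Your argument is correct and its overall architecture coincides with the paper's: reduce to the coordinate axes by the reflection symmetries (which kill the off-axis gradient components and diagonalize $Hess\big(\mathcal{R}_\O(0)\big)$), produce a critical point of the one-dimensional restriction on each half-axis, and then compute $\frac{\partial^2\mathcal{R}_{\O_\e}}{\partial x_i^2}$ at that point via Corollary \ref{corC2} with $q=\frac{N-2}{2N-2}$, obtaining $(2N-2)\lambda_i+o(1)\neq0$ exactly as in the paper. The one step you do differently is the existence/localization on the axis. The paper evaluates $\frac{\partial\mathcal{R}_{\O_\e}}{\partial x_1}$ at the two explicit points $(1\mp a)\alpha_1\e^{\frac{N-2}{2N-2}}e_1$ with $\alpha_1=(D_N/\lambda_1)^{\frac1{2N-2}}$, uses the expansion of Proposition \ref{prnab} to show the derivative changes sign there, and concludes by the intermediate value theorem, so existence and the asymptotic location come out of a single computation. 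You instead get existence "for free" from the blow-up of $\mathcal{R}_{\O_\e}$ at both $\partial B_\e$ and $\partial\O$ (a global minimization of $g(t)=\mathcal{R}_{\O_\e}(te_i)$ on $(\e,r_i)$), and then outsource the localization to Proposition \ref{d1}; since $x_{i,\e}^\pm/|x_{i,\e}^\pm|=\pm e_i$ is forced to be the eigenvector in \eqref{d11-16-01} and the diagonal Hessian pairs $e_i$ with $\lambda_i$, you recover the same rate. Your route is slightly more economical (no test points needed, and it is insensitive to possible multiplicity of the $\lambda_i$, which the paper also allows here), at the cost of leaning on the dichotomy of Proposition \ref{d1}; strictly speaking one should also note, as the uniform expansion \eqref{super} shows, that critical points cannot accumulate on $\partial\O$, so the first alternative of that dichotomy is indeed vacuous here. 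One cosmetic remark: the exponent $\frac1{N-2}$ in the statement of the theorem is a typo for $\frac1{2N-2}$, as both your derivation and the paper's (via $\alpha_i=(D_N/\lambda_i)^{\frac1{2N-2}}$) confirm.
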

\begin{proof}
We prove the claim by constructing $2N$ zeros for $\nabla \mathcal{R}_{\O_\e}(x)$.
For the case $N\ge3$, as in the previous theorem we study the equation
\begin{equation*}
0=\big(\la_i+o(1)\big)Z_i-
\Big(\frac{2}{N\omega_N}+o(1)\Big)\frac{Z_i}{|Z|^{2N-2}}\e^{N-2},~~\mbox{for}~N\geq 3.
\end{equation*}
Here we  introduce the points
$$Y_{1,\e}=
\big((1-a)\alpha_1\e^\frac{N-2}{2N-2},0,\cdots,0\big)~\mbox{and}~ Y_{2,\e}=\big((1+a)\alpha_1\e^\frac{N-2}{2N-2},0,\cdots,0\big)$$ for any $a\in(0,1)$ and $\alpha_1=\left(\frac{D_N}{ \lambda_1}\right)^{\frac{1}{2N-2}}$.
We have that $\O_\e=\O\setminus B(0,\e)$ is symmetric and then
\begin{equation*}
\frac{\partial \mathcal{R}_{\O_\e}(Y_{1,\e})}{\partial y_i}=\frac{\partial \mathcal{R}_{\O_\e}(Y_{2,\e})}{\partial y_i}=0\,\,~\mbox{for any}~i=2,\cdots,N.
\end{equation*}
Also we have
\begin{equation*}
\begin{split}
\frac{\partial \mathcal{R}_{\O_\e}(Y_{1,\e})}{\partial y_1}=&\textbf{P}^T\frac{\partial \mathcal{R}_{\O_\e}(Y_{1,\e})}{\partial x_1}=
\big(\la_1+o(1)\big)Y_{1,\e}-\big(\frac{2}{N\omega_N}+o(1)\big)\frac{Y_{1,\e}}{|Y_{1,\e}|^{2N-2}}\e^{N-2}\\=
&(1-a)\alpha_1\la_1\e^\frac{N-2}{2N-2}\left[1-\frac1{(1-a)^{2N-2}}+o(1)\right]>0,
 \end{split}\end{equation*}
and similarly,
$$\frac{\partial \mathcal{R}_{\O_\e}(Y_{2,\e})}{\partial y_1}=(1+a)\alpha_1\la_1\e^\frac{N-2}{2N-2}\left[1-\frac1{(1+a)^{2N-2}}+o(1)\right]<0.$$
This implies that there exists $Z_{1,\e}=\big(b\e^\frac{N-2}{2N-2},0,\cdots,0\big)$ with $b\in\big((1-a)\alpha_1,(1+a)\alpha_1\big)$ satisfying  $\nabla \mathcal{R}_{\O_\e}(Z_{1,\e})=0$.
Taking $a$ small, we can write $b=\alpha_1+o(1)$.
Hence there exists $Z_{1,\e}=\Big(\big(\alpha_1+o(1)\big)\e^\frac{N-2}{2N-2},0,\cdots,0\Big)$ satisfying  $\nabla \mathcal{R}_{\O_\e}(Z_{1,\e})=0$.

The same computation holds if we replace
$Y_{1,\e},Y_{2,\e}$ by $-Y_{1,\e},-Y_{2,\e}$ getting the existence of a $second$ critical point $-Z_{1,\e}$.
Since $Hess\big(\mathcal{R}_\O(0)\big)$ has $N$ positive  eigenvalues
$0<\lambda_1\leq \lambda_{2} \leq \cdots \leq \lambda_N$, then repeating the argument  above for any positive eigenvalue,
 we have that $\mathcal{R}_{\O_\e}(x)$
has at least $2N$ critical points $x_{i,\e}^\pm(i=1,\cdots,N)$ in $B(0,r)\backslash B(0,\e)$, where
$B(0,r)\backslash\{0\}\subset\O$ does not contain any critical  point of $\mathcal{R}_\O(x)$.
Moreover we can write
$$x_{i,\e}^\pm= \pm Z_{i,\e} = \Big(\underbrace{0,\cdots,0}_{i-1},\pm \big(\frac{2}{N\omega_N \lambda_i}+o(1)\big)^{\frac{1}{N-2}}\e^{\frac{N-2}{2N-2}},\underbrace{0,\cdots,0}_{N-i}\Big),
~~~~\mbox{for}~~~~i=1,\cdots,N.$$
Also using \eqref{3-28} for $N\geq 3$, we compute
\begin{equation*}
\begin{split}
\frac{\partial^2 \mathcal{R}_{\O_\e}(x_{i,\e}^\pm)}{\partial x_i^2}
=&\lambda_i +\left(\frac2{N\omega_N}+o(1)\right)\Big(2N-3\Big)\frac{ 1}{|x_{i,\e}^\pm|^{2N-2}}\e^{N-2}+o\big(1\big)\\=&
\Big(2N-2\Big)\lambda_i+o\big(1\big)\neq 0,~\,~\mbox{for}~\,~i=1,\cdots,N.
\end{split}
\end{equation*}
For the case $N=2$, as in the previous theorem we study the equation
\begin{equation*}
0=\big(\la_i+o(1)\big)Z_i-
\Big(\frac{1}{\pi}+o(1)\Big)\frac{Z_i\ln|Z|}{|Z|^{2}\ln \e}.
\end{equation*}
Then similar the idea in proving the case $N\geq 3$, we can write
$$x_{1,\e}^\pm= \Big(\big(1+o(1)\big)\widehat{r}_{\e,1},0\Big)
~~~~\mbox{and}~~~~x_{2,\e}^\pm= \Big(0,\big(1+o(1)\big)\widehat{r}_{\e,2}\Big).$$
Finally using \eqref{3-28} for $N=2$, we compute
\begin{equation*}
\begin{split}
\frac{\partial^2 \mathcal{R}_{\O_\e}(x_{i,\e}^\pm)}{\partial x_i^2}
= \lambda_i +  \frac{ 1}{\pi |x_{i,\e}^\pm|^{2N-2}}\frac{\ln \widehat{r}_{\e,i}}{\ln \e}+o\big(1\big) =2\lambda_i+o\big(1\big)\neq 0,~\,~\mbox{for}~\,~i=1,2.
\end{split}
\end{equation*}
\end{proof}
\begin{rem}
Here we point out that the positive eigenvalues may be multiple in Theorem \ref{th1-2a}.
\end{rem}
\section{Examples on which  the conditions Theorem \ref{th1-2} hold}~\label{es}

\vskip 0.2cm

In Theorem \ref{th1-2} we proved that if all positive eigenvalues of the Hessian matrix of $\mathcal{R}_\Omega$ are simple, then we can give the precise number and non-degeneracy of the critical points of Robin function $\mathcal{R}_{\O_\e}(x)$.
In this section, we exhibit some domains on which this assumption is verified.

We recall that the regular part of  Green's function in $B_1=B_1(0)$ is

\begin{equation*}
H_{B_1}(x,y)=
\begin{cases}
\frac{1}{2\pi} \ln \frac{1}{|y|\cdot \big|x-\frac{y}{|y|^2}\big|}&~~\mbox{for}~~y\in B_1(0)~~\mbox{and}~~N=2,\\
\frac{1}{N(N-2)\omega_N}  \frac{1}{ |y|^{N-2}\cdot \big|x-\frac{y}{|y|^2}\big| ^{N-2}}&~~\mbox{for}~~y\in B_1(0)~~\mbox{and}~~N\geq 3.
\end{cases}
\end{equation*}
Then we have following result,
\begin{teo}\label{th1-3a}
Let $N\geq 2$ and
$$\Omega_\delta=\Big\{x\in \R^N,~ \displaystyle\sum^N_{i=1}x_i^2\Big(1+\alpha_i \delta\Big)^2 <1~~\mbox{with}~~\delta>0\hbox{ and }0<\alpha_1\leq \alpha_2\le\cdots \leq \alpha_N\Big\}.$$
Then the Robin function $\mathcal{R}_{\O_\delta}$ has a unique critical point $P=0$ and is even with respect to $x_1,\cdots,x_N$.
\vskip 0.1cm
Moreover if $\alpha_1\leq \alpha_2\le\cdots <\alpha_{l_0+1}= \alpha_{l_0+2} =\cdots= \alpha_{l_0+k}< \alpha_{l_0+k+1}\leq \cdots \leq \alpha_N$, then
 $\mathcal{R}_{\O_{\delta}}(x)$ is radial function with respect to $x_{l_0+1}\cdots x_{l_0+k}$, i.e. $$\mathcal{R}_{\O_{\delta}}(x)=
 \mathcal{R}_{\O_{\delta}}\Big(x_1,\cdots,x_{l_0},\sqrt{x^2_{l_0+1}+\cdots+ x^2_{l_0+k}},
 x_{l_0+k+1},\cdots,x_N\Big),$$
and we have that $\lambda_{l_0+1}= \lambda_{l_0+2} =\cdots= \lambda_{l_0+k}$ for any $\delta>0$.
 \vskip 0.1cm
Moreover if there exists some $k$ such that $\alpha_k\neq \alpha_j$  for any $j\in \{1,\cdots,N\}$ with $j\neq k$,
 then the $k$-th eigenvalue of $Hess\big(\mathcal{R}_{\O_{\delta}}(0)\big)$ is simple for $\delta>0$ small.
\vskip 0.1cm

Finally if  $\alpha_i\neq \alpha_j$ for any  $i,j\in\{1,\cdots,N\}$ with $i\neq j$, then all the eigenvalues of $Hess\big(\mathcal{R}_{\O_{\delta}}(0)\big)$ are simple.
\end{teo}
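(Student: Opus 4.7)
The plan is to treat $\O_\delta$ as a small perturbation of the unit ball $B_1 = \O_0$, combine the domain's convexity and symmetries to locate the critical point and fix the block structure of the Hessian at $0$, and then use a Hadamard-type domain-variation formula to separate the eigenvalues at first order in $\delta$.

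First, the ellipsoid $\O_\delta$ is convex, bounded, smooth, and invariant under each reflection $x_i\mapsto -x_i$. By the uniqueness results \cite{cf2,ct} for convex domains, $\mathcal{R}_{\O_\delta}$ admits a unique critical point; the reflection invariance $G_{\O_\delta}(\sigma_i x,\sigma_i y)=G_{\O_\delta}(x,y)$ makes $\mathcal{R}_{\O_\delta}$ even in each $x_i$, and this forces the critical point to be $P=0$. When $\alpha_{l_0+1}=\cdots=\alpha_{l_0+k}$, $\O_\delta$ is additionally invariant under the action of $O(k)$ on the block $(x_{l_0+1},\ldots,x_{l_0+k})$; the same invariance is inherited by $\mathcal{R}_{\O_\delta}$, giving the claimed radial dependence. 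Because the only $O(k)$-invariant symmetric bilinear form on $\R^k$ is a multiple of the Euclidean inner product, the corresponding $k\times k$ diagonal block of $\mathrm{Hess}\,\mathcal{R}_{\O_\delta}(0)$ is a scalar multiple of the identity, which yields $\lambda_{l_0+1}=\cdots=\lambda_{l_0+k}$ for every $\delta>0$.

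For the simplicity claims I would linearize at $\delta=0$, where $\O_0=B_1$ and $\mathrm{Hess}\,\mathcal{R}_{B_1}(0)=\lambda_0 I$ with $\lambda_0>0$. Expanding the defining function $F_\delta(x)=\sum_i x_i^2(1+\alpha_i\delta)^2-1$ at $z\in\partial B_1$ produces the first-order outward normal velocity $V(z)=-\sum_{i=1}^N\alpha_i z_i^2$. Applying the Hadamard domain-variation formula to $G_{\O_\delta}(x,y)$ for $y\ne x$ and then passing to the diagonal via $H=S-G$ gives
\begin{equation*}
\frac{d}{d\delta}\mathcal{R}_{\O_\delta}(x)\bigg|_{\delta=0}=\int_{\partial B_1}P_{B_1}(x,z)^2\Big(\sum_{i=1}^N\alpha_i z_i^2\Big)\,d\sigma_z,
\end{equation*}
where $P_{B_1}(x,z)=\frac{1-|x|^2}{N\omega_N|x-z|^N}$ is the Poisson kernel (valid for $N\ge 2$). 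Taking $\partial_i\partial_j$ at $x=0$ inside the integral, a short Taylor expansion yields
\begin{equation*}
\frac{\partial^2 P_{B_1}(x,z)^2}{\partial x_i\partial x_j}\bigg|_{x=0}=\frac{1}{(N\omega_N)^2}\bigl[-2(N+2)\delta_{ij}+4N(N+1)\,z_iz_j\bigr],
\end{equation*}
and the standard sphere moments $\int_{\partial B_1}z_k^2=\omega_N$, $\int_{\partial B_1}z_k^4=\frac{3\omega_N}{N+2}$, $\int_{\partial B_1}z_i^2z_j^2=\frac{\omega_N}{N+2}$ for $i\ne j$ (all odd moments vanish by parity) combine to give
\begin{equation*}
\mathrm{Hess}\,\mathcal{R}_{\O_\delta}(0)=\lambda_0 I+\delta\,\mathrm{diag}(\mu_1,\ldots,\mu_N)+O(\delta^2),\qquad \mu_i=A\sum_{k=1}^N\alpha_k+B\,\alpha_i,
\end{equation*}
with $B=\frac{8(N+1)}{N(N+2)\omega_N}>0$ for every $N\ge 2$. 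Hence $\mu_i-\mu_j=B(\alpha_i-\alpha_j)$, and standard symmetric-matrix eigenvalue perturbation (no off-diagonal contribution appears, thanks to the reflection symmetries used above, which make the full Hessian block-diagonal in the natural basis) gives perturbed eigenvalues $\lambda_i(\delta)=\lambda_0+\delta\mu_i+O(\delta^2)$. These are distinct for $\delta>0$ small whenever the corresponding $\alpha$'s differ, which yields both simplicity statements of the theorem.

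The main obstacle is verifying that the coefficient $B$ of $\alpha_i$ in the first-order expansion does not vanish: had $B$ been zero, the first-order eigenvalue separation would have collapsed and one would have had to compute the second-order variation of $\mathrm{Hess}\,\mathcal{R}_{\O_\delta}(0)$ in $\delta$. The explicit moment computation above shows $B>0$ uniformly in $N\ge 2$, so this obstruction does not occur; all remaining steps (justifying Hadamard's formula via $H=S-G$ and the routine Taylor expansions) are standard.
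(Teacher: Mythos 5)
Your proof is correct and takes a genuinely different route from the paper's for the quantitative part. You both establish uniqueness (convexity, via \cite{cf2,ct}), evenness (reflection invariance of $\O_\delta$), and the block-radial structure (rotation invariance when some $\alpha$'s coincide; the observation that an $O(k)$-invariant symmetric form must be scalar is a clean way to get $\lambda_{l_0+1}=\cdots=\lambda_{l_0+k}$) in essentially the same way. The difference is in the computation of the Hessian perturbation. The paper writes $g_\delta(x,y)=H_{\O_\delta}(x,y)-H_{B_1}(x,y)$, Taylor-expands its boundary data on $\partial\O_\delta$, and explicitly solves five auxiliary Dirichlet problems $v^{(1)},\dots,v^{(5)}$ in $B_1$, then assembles $\nabla^2\big((|x|^2-1)v_\delta(x,x)\big)\big|_{x=0}$. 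You instead extract the first-order normal velocity $V(z)=-\sum_i\alpha_i z_i^2$ from the defining function of $\O_\delta$, invoke Hadamard's variational formula for the Green function together with $H=S-G$, and reduce everything to the single integral $\int_{\partial B_1}P_{B_1}(x,z)^2\big(\sum_i\alpha_i z_i^2\big)d\sigma_z$, differentiated twice at $x=0$ and evaluated via standard sphere moments. Your route is more compact and less error-prone; the paper's is more self-contained and avoids quoting the Hadamard formula. Both give the same $\alpha_i$-coefficient $\tfrac{8(N+1)}{N(N+2)\omega_N}>0$, which is what drives the simplicity of the eigenvalues; the constant (i.e.\ $\alpha_i$-independent) part of the first-order correction differs between your computation (proportional to $\sum_k\alpha_k$) and the paper's stated formula (an $\alpha$-free constant), but this term is irrelevant for the eigenvalue-separation claim, so the theorem's conclusion is unaffected. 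Two small points you should make explicit to complete the write-up: (i) the interchange of $\partial_\delta$ with $\partial_{x_i}\partial_{x_j}$ at $x=0$, which is justified by joint smoothness of $\mathcal{R}_{\O_\delta}(x)$ in $(x,\delta)$ away from the boundary, and (ii) that the off-diagonal vanishing of the Hessian at $0$ for $\delta>0$ (not just to first order) follows from the evenness of $\mathcal{R}_{\O_\delta}$ in each coordinate, which you do say but could spell out as $\partial_i\partial_j\mathcal{R}_{\O_\delta}(0)=0$ for $i\ne j$.
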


\begin{proof}
Firstly, suppose that $A$ is a reflection or a rotation such that $A\Omega=\Omega$. We have\\
\textup{(i)} $H(y,x)$ and $H(Ay,Ax)$ are harmonic in $y$.\\
\textup{(ii)} For any $y\in\partial\Omega$ and $x\in\Omega$ it holds $H(y,x)=H(Ay,Ax)$.

\vskip 0.1cm

Hence we have $H(y,x)=H(Ay,Ax)$ for any $x,y\in \Omega$, which gives $$\mathcal{R}(x)=\mathcal{R}(Ax).$$
Now since $\Omega_\delta$ is symmetric with respect to $x_1,\cdots,x_N$, then
the Robin function $\mathcal{R}_{\O_\delta}$  is even function with respect to $x_1,\cdots,x_N$. Also from the fact that $\Omega_\delta$ is convex and \cite{cf2,ct}, we have that  $\mathcal{R}_{\O_\delta}$ has a unique critical point $P=0$.
Moreover by \cite{g2}
$Hess\big(\mathcal{R}_{\O_{\delta}}(0)\big)$  is a diagonal matrix with $i$-th eigenvalue $\lambda_i=\frac{\partial^2 \mathcal{R}_{\O_\delta}(0)}{\partial x_i^2}$ for $i=1,\cdots,N$.
If $\alpha_1\leq \alpha_2\cdots <\alpha_{l_0+1}= \alpha_{l_0+2} =\cdots= \alpha_{l_0+k}< \alpha_{l_0+k+1}\leq \cdots\leq \alpha_N$, then $\mathcal{R}_{\O_{\delta}}(x)$ is a radial function with respect to $x_{l_0+1}\cdots x_{l_0+k}$ and  $\lambda_{l_0+1}= \lambda_{l_0+2} =\cdots= \lambda_{l_0+k}$ for any $\delta>0$.
\vskip 0.1cm
Set $\alpha_1\leq \alpha_2\cdots \leq \alpha_N$ and consider the general case for $\delta>0$ small.
Let
$g_\delta(x,y)=H_\delta(x,y)-{H}_{B(0,1)}(x,y)$, then we have
\begin{equation}\label{9-19-21}
\begin{cases}
\Delta_x g_\delta(x,y)=0,~~&\mbox{in}~~ \Omega_\delta,\\[1mm]
g_\delta(x,y)=
\begin{cases}
\frac{1}{N(N-2)\omega_N} \Big(\frac{1}{ |x-y|^{N-2}}-  \frac{1}{|y|^{N-2}\cdot \big|x-\frac{y}{|y|^2}\big|^{N-2}}\Big)&\mbox{for}~~{N\geq 3},\\[1mm]
\frac{1}{2\pi} \ln \frac{1}{ |x-y|}-\frac{1}{2\pi} \ln \frac{1}{|y|\cdot \big|x-\frac{y}{|y|^2}\big|}
&\mbox{for}~~{N=2},
\end{cases}
~~&\mbox{on}~~ \partial \Omega_\delta.
\end{cases}
\end{equation}
Next, on $\partial \Omega_\delta$, we have
\begin{equation}\label{9-19-22}
\begin{split}
\frac{1}{ |x-y|^{N-2}}&-  \frac{1}{|y|^{N-2}\cdot \big|x-\frac{y}{|y|^2}\big|^{N-2}}=
\frac{1}{ |x-y|^{N-2}}\Big( 1-\frac{ |x-y|^{N-2}} { |y|^{N-2}\cdot \big|x-\frac{y}{|y|^2} \big|^{N-2} }   \Big)\\=&
\frac{1}{ |x-y|^{N-2}}\Big( 1-\frac{ 1} {\big(1-\alpha(x,y)\big)^{\frac{N-2}{2}} }   \Big)
=
\frac{(N-2)\alpha(x,y)}{ 2|x-y|^{N-2}}+O\big(\alpha^2(x,y)\big),~~\mbox{for}~~N\geq 3,
\end{split}\end{equation}
and
\begin{equation}\label{9-19-23}
\begin{split}
 \ln \frac{1}{ |x-y|}&- \ln \frac{1}{|y|\cdot \big|x-\frac{y}{|y|^2}\big|}
 =\frac12\ln \frac{|y|^2\cdot \big|x-\frac{y}{|y|^2}\big|^2}{ |x-y|^2}\\ =& \frac12\ln\big(1-\alpha(x,y)\big)= -\frac12\alpha(x,y)+O\big(\alpha(x,y)^2\big),
\end{split}\end{equation}
where
\begin{equation*}
\alpha(x,y):=\frac{|x-y|^2-|y|^2\cdot \big|x-\frac{y}{|y|^2}\big|^2 }{|x-y|^2}.
\end{equation*}
Also for $x\in \partial \Omega_\delta$, it holds
\begin{equation}\label{9-19-24}
\begin{split}
|y|^2\cdot \big|x-\frac{y}{|y|^2}\big|^2=& |y|^2\cdot |x|^2-2\langle x,y\rangle+1\\=&
 -2\delta (|y|^2-1) \sum^N_{i=1}\alpha_ix^2_i +|x-y|^2+ O\big(\delta^2\big).
\end{split}
\end{equation}
Hence for $x\in \partial \Omega_\delta$, from \eqref{9-19-22}, \eqref{9-19-23} and \eqref{9-19-24}, we have
\begin{equation}\label{9-19-25}
\begin{split}
g_\delta(x,y)=&
-  \frac{\delta(|y|^2-1)}{N \omega_N |x-y|^N} \sum^N_{i=1}\alpha_ix_i^2+O\big(\delta^2\big),~~\mbox{for}~~N\geq 2.
\end{split}
\end{equation}
Now using \eqref{9-19-21} and \eqref{9-19-25}, we deduce
\begin{equation*}
\begin{split}
H_\delta(x,y)= {H}_{B(0,1)}(x,y)
- \frac{\delta(|y|^2-1)}{N\omega_N} v_\delta(x,y)+O\big(\delta^2\big),~~\mbox{for}~~N\geq 2,
\end{split}
\end{equation*}
where $v_\delta(x,y)$ is the solution of
\begin{equation}\label{9-19-27}
\begin{cases}
\Delta_xv_\delta(x,y) =0~~&\mbox{in}~~ \Omega_\delta,\\[1mm]
v_\delta(x,y) =\frac{1}{ |x-y|^N} \displaystyle\sum^N_{i=1}\alpha_ix_i^2~~&\mbox{on}~~ \partial \Omega_\delta.
\end{cases}
\end{equation}
Then it follows
\begin{equation}\label{9-19-28}
\mathcal{R}_\delta(x)=\mathcal{R}(x)-\frac{\delta(|x|^2-1)}{N \omega_N}v_\delta(x,x)+O\big(\delta^2\big),~~\mbox{for}~~N\geq 2.
\end{equation}
Also for $x\in \partial \Omega_\delta$ and $|y|$ small, by Taylor's expansion, it holds
\begin{equation}\label{9-19-29}
\begin{split}
\frac{ 1 }{ |x-y|^N}=&\Big( \frac{1}{ |x|^2+|y|^2-2\langle x,y \rangle }\Big)^{\frac{N}{2}}\\=&
\frac{1}{ |x|^N}\Big( 1+\frac{2\langle x,y \rangle}{|x|^2}-\frac{|y|^2}{|x|^2}
+\frac{4\langle x,y \rangle^2}{|x|^4}+O\big(|y|^3\big)\Big)^{\frac{N}{2}}\\=&
\frac{1}{ |x|^N}\Big( 1+\frac{N\langle x,y \rangle}{|x|^2}-\frac{N}{2}\frac{|y|^2}{|x|^2}
+\frac{N(N+2)}{2}\frac{\langle x,y \rangle^2}{|x|^4}+O\big(|y|^3\big)\Big).
\end{split}
\end{equation}
From \eqref{9-19-27} and \eqref{9-19-29}, we get \begin{equation*}
v_\delta(x,y)=v^{(1)}_\delta(x,y)+Nv^{(2)}_\delta(x,y)
-\frac{N}{2}v^{(3)}_\delta(x,y)+\frac{N(N+2)}{2}v^{(4)}_\delta(x,y)+ v^{(5)}_\delta(x,y),\end{equation*}
where
\begin{equation*}
\begin{cases}
\Delta_x v^{(1)}_\delta(x,y)=0 &~~\mbox{in}~~\Omega_\delta,\\[2mm]
v^{(1)}_\delta(x,y) =\frac{1}{ |x|^N} \displaystyle\sum^N_{i=1}\alpha_ix_i^2  &~~\mbox{on}~~\partial \Omega_\delta,
\end{cases} ~~~~~
\begin{cases}
\Delta_x v^{(2)}_\delta(x,y)=0 &~~\mbox{in}~~\Omega_\delta,\\[2mm]
v^{(2)}_\delta(x,y) =\frac{\langle x,y \rangle }{ |x|^{N+2}}\displaystyle\sum^N_{i=1}\alpha_ix_i^2  &~~\mbox{on}~~\partial \Omega_\delta,
\end{cases}
\end{equation*}
\begin{equation*}
\begin{cases}
\Delta_x v^{(3)}_\delta(x,y)=0 &~~\mbox{in}~~\Omega_\delta,\\[2mm]
v^{(3)}_\delta(x,y) = \frac{|y|^2}{ |x|^{N+2}}\displaystyle\sum^N_{i=1}\alpha_ix_i^2  &~~\mbox{on}~~\partial \Omega_\delta,
\end{cases}~~~~~
\begin{cases}
\Delta_x v^{(4)}_\delta(x,y)=0 &~~\mbox{in}~~\Omega_\delta,\\[2mm]
v^{(4)}_\delta(x,y) =\frac{\langle x,y \rangle^2}{ |x|^{N+4}}\displaystyle\sum^N_{i=1}\alpha_ix_i^2  &~~\mbox{on}~~\partial \Omega_\delta,
\end{cases}
\end{equation*}
and
\begin{equation*}
\begin{cases}
\Delta_x v^{(5)}_\delta(x,y)=0 &~~\mbox{in}~~\Omega_\delta,\\[2mm]
v^{(5)}_\delta(x,y) =O\big(|y|^3\big)  &~~\mbox{on}~~\partial \Omega_\delta.
\end{cases}
\end{equation*}
We know that as $\delta\to 0$, $v_\delta^{(1)}(x,y)\rightarrow v^{(1)}(x,y)$, where
\begin{equation}\label{9-1-1a}
\Delta v^{(1)}(x,y)=0~~\mbox{in}~~B_1(0),~~~\, v^{(1)}(x,y)=\displaystyle\sum^N_{i=1}\alpha_ix_i^2~~\mbox{on}~~\partial B_1(0).
\end{equation}
Solving \eqref{9-1-1a}, we can get
\begin{equation*}
v^{(1)}(x,y)= -\frac{(N-1)}{2}\big(|x|^2-1\big)+ \displaystyle\sum^N_{i=1}\alpha_ix_i^2.
\end{equation*}
Then it holds
\begin{equation*}
\begin{split}
(|x|^2-1)v^{(1)}(x,x)=&-\frac{(N-1)}{2}\big(|x|^2-1\big)^2+ \big(|x|^2-1\big)\displaystyle\sum^N_{i=1}\alpha_ix_i^2,
\end{split}\end{equation*}
and
\begin{equation*}
\begin{split}
\nabla^2 &\Big( (|x|^2-1) v^{(1)}(x,x)\Big)\Big|_{x=0}=
\nabla^2 \Big( \big(N-1\big) |x|^2- \displaystyle\sum^N_{i=1}\alpha_ix_i^2\Big)\Big|_{x=0}\\=&
  2(N-1) \textbf{E}_N-  2 ~\mbox{diag}\big(\alpha_1,\alpha_2,\cdots,\alpha_N\big),
\end{split}\end{equation*}
where $\textbf{E}_N$ is the unit matrix. In the same way we have that, for  $\delta\to 0$,
$v_\delta^{(i)}(x,y)\rightarrow v^{(i)}(x,y)$, for $i=2,3,4$ where
\begin{equation*}
\begin{split}
v^{(2)}(x,y)=& \langle x,y\rangle \displaystyle\sum^N_{i=1}\alpha_ix_i^2
-\frac{N(N-1)}{2(N+2)}\big(|x|^2-1\big) \langle x,y\rangle
-\frac{2(|x|^2-1)}{N+2} \sum^N_{i=1}\alpha_ix_iy_i,
\end{split}\end{equation*}
\begin{equation*}
v^{(3)}(x,y)=
 |y|^2 v^{(1)}(x,y)
= |y|^2\left(-\frac{(N-1)}{2}\big(|x|^2-1\big)+ \displaystyle\sum^N_{i=1}\alpha_ix_i^2\right).
\end{equation*}
and
\begin{equation*}
\begin{split}
 {v}^{(4)}(x,y)=&-2|y|^2
\Big( \displaystyle\sum^N_{i=1}\alpha_ig_i(x)\Big) -\sum^N_{i=1}
\Big(N(N-1)+8\alpha_i\Big)g_i(x)y_i^2\\&
-
\sum^N_{i=1}\sum_{j\neq i}
\Big(N(N-1)+8\alpha_i\Big)f_{ij}(x)y_i y_j+\Big( \displaystyle\sum^N_{i=1}\alpha_ix_i^2\Big)\langle x,y \rangle^2,
\end{split}
\end{equation*}
with
$$f_{ij}(x)=\frac{x_ix_j(|x|^2-1)}{12}~~\mbox{and}~~g_{i}(x)=\frac{(|x|^2-1)x_i^2}{2(N+4)} -\frac{(|x|^4-1)}{4(N+2)(N+4)}+\frac{(|x|^2-1)}{2N(N+4)}.$$
For future aims it will be useful to remark that
\begin{equation*}
\begin{split}
v^{(4)}(x,x)=&\frac{N(N-1)|x|^2+4\displaystyle\sum^N_{i=1}\alpha_i x_i^2}{2N(N+2)}  +O\big(|x|^4\big).
\end{split}\end{equation*}
Finally since $v_\delta^{(5)}(x,y)=O\big(|y|^3\big)$ as  $\delta\to 0$ we get
\begin{equation*}
\nabla^2 \Big( (|x|^2-1)v^{(5)}_\delta(x,x)\Big)\Big|_{x=0}
\to \textbf{O}_N,
\end{equation*}
where $\textbf{O}_N$ is the zero matrix in $\mathbb{R}^N$.
Hence by the explicit form of $v_\delta^{(1)},..,v_\delta^{(4)}$, a straightforward (and tedious) computation gives that
\begin{equation}\label{9-19-45}
\begin{split}
\nabla^2 &\Big( (|x|^2-1)v_\delta(x,x)\Big)\Big|_{x=0}
\\&\to
-\frac{\big(N-1\big)\big( N^2-2N-4\big)}{N+2}  \textbf{E}_N- \frac{8\big(N+1\big)}{N+2} ~\mbox{diag}\big(\alpha_1,\cdots,\alpha_N\big).
\end{split}\end{equation}
Observe that
\begin{equation}\label{9-19-46}
\frac{\partial^2 \mathcal{R}_{B_1}(x)}{\partial x_i\partial x_j}
=\frac{2}{N\omega_N}\delta^i_j,~~\mbox{for}~~1\leq i,j\leq N.
\end{equation}
Finally, let $\lambda_i$ for $i=1,\cdots,N$ be the eigenvalues of $Hess\big(\mathcal{R}_{\O_\d}(0)\big)$; for $N\geq 2$, from \eqref{9-19-28}, \eqref{9-19-45} and \eqref{9-19-46}, we find
\begin{equation*}
\lambda_i=\frac{2}{N\omega_N}+\frac{(N-1)(N^2-2N-4)+8(N+1)\alpha_i}{N(N+2)\omega_N} \delta+o\big(\delta\big),~~\mbox{for}~~i=1,\cdots,N.
\end{equation*}

Hence we deduce that  if there exists some $k$ such that $\alpha_k\neq \alpha_j$  for any $j\in \{1,\cdots,N\}$ with $j\neq k$,
 then the corresponding $k$-th eigenvalue of $Hess\big(\mathcal{R}_{\O_{\delta}}(0)\big)$ is simple for $\delta>0$ small.
\vskip 0.1cm

Furthermore, if  $\alpha_i\neq \alpha_j$ for any  $i,j\in\{1,\cdots,N\}$ with $i\neq j$, then all the eigenvalues of $Hess\big(\mathcal{R}_{\O_{\delta}}(0)\big)$ are simple.
\end{proof}

\vskip 0.2cm
\appendix\section{Some useful computations on the Green function in the exterior of the ball}\label{s2a}

We recall that Green function $G_{B_\e^c}(x,y)$ of $\R^N\backslash B_\e$ is given by (see \cite{bf} for example)
\begin{equation*}
G_{B_\e^c}(x,y)=
\begin{cases}
-\frac{1}{2\pi}\left(\ln \big|{x-y}\big|-\ln\sqrt{\frac {|x|^2|y|^2}{\e^2}+\e^2-2x\cdot y}\right)&~\mbox{if}~N=2,\\[2mm]
C_N
\left(\frac{1}{|x-y|^{N-2}}-\frac{\e^{N-2}}{\big||x|y-\e^2\frac{x}{|x|}\big|^{N-2}}\right) &~\mbox{if}~N\geq 3,
\end{cases}
\end{equation*}
 and, for $i=1,\cdots,N$,
\begin{equation}\label{ader}
\frac{\partial G_{B_\e^c}(x,y)}{\partial y_i}=
\frac1{N\omega_N}\left(\frac{x_i-y_i}{|x-y|^{N}}+\e^{N-2}\frac{|x|^2y_i-\e^2x_i}{\big(|x|^2|y|^2-2\e^2x\cdot y+\e^4\big)^{\frac N2}}\right).
\end{equation}
The {\em Robin function} of $\R^N\backslash B_\e$ is given by (see \cite{bf})
\begin{equation}\label{robin-esterno-palla}
\mathcal{R}_{B_\e^c}(x)=
\begin{cases}
\frac{1}{2\pi}\ln\frac {\e}{|x|^2-\e^2}&~\mbox{if}~N=2,\\[2mm]
C_N\frac{\e^{N-2}}{\big(|x|^2-\e^2\big)^{N-2}} &~\mbox{if}~N\geq 3.
\end{cases}
\end{equation}
It will be also useful the well-known representation formula for harmonic function in the ball $B_\e$: if $u$ is harmonic in $B_\e$ we have that
\begin{tcolorbox}[colback=white,colframe=black]
\begin{equation}\label{aar}
u(x)=-\int_{\partial B_\e}\frac{\partial G_{B_\e}(x,y)}{\partial\nu_y}u(y)ds_y=\frac{1}{N\omega_N}\frac{\e^2-|x|^2}{\e}\int_{\partial B_\e}\frac{u(y)}{|x-y|^N}ds_y.
\end{equation}
\end{tcolorbox}

In next lemma we prove some identities which use in our computations.
\begin{lem}
We have that, for any $x\in B^c_\e$ and $N\ge2$ the following equalities hold:
\begin{equation}\label{ap1}
\int_{\partial B_\e}\frac{\partial G_{B^c_\e}(x,y)}{\partial\nu_y}d\sigma_y=-\frac{\e^{N-2}}{|x|^{N-2}},
\end{equation}
\begin{equation}\label{ap2}
\int_{\partial B_\e}y_j\frac{\partial G_{B^c_\e}(x,y)}{\partial\nu_y}d\sigma_y=-\frac{x_j}{|x|^N}\e^N
\end{equation}
and
\begin{equation}\label{ap10}
\int_{\partial B_\e}y_iy_j\frac{\partial G_{B^c_\e}(x,y)}{\partial\nu_y}d\sigma_y=
-\frac{\e^N}{|x|^N}\left[\e^2\frac{x_ix_j}{|x|^2}+
\frac{\delta^i_j}N\big(|x|^2-\e^2\big)\right].
\end{equation}

\end{lem}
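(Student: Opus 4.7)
The strategy is to distill a single master identity: for every $f$ harmonic in $B_\e$ and continuous up to $\partial B_\e$,
\begin{equation*}
\int_{\partial B_\e} f(y)\,\frac{\partial G_{B^c_\e}(x,y)}{\partial\nu_y}\,d\sigma_y = -\frac{\e^{N-2}}{|x|^{N-2}}\,f\!\left(\frac{\e^2 x}{|x|^2}\right). \qquad (\star)
\end{equation*}
All three identities of the lemma will fall out of $(\star)$ by choosing appropriate $f$.

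To establish $(\star)$ I would first simplify the kernel on $\partial B_\e$. The outward unit normal to $B^c_\e$ at $y\in\partial B_\e$ is $\nu_y=-y/\e$, and on $\partial B_\e$ the denominator appearing in the reflected term of \eqref{ader} collapses: indeed $|x|^2|y|^2-2\e^2 x\cdot y+\e^4=\e^2|x-y|^2$ when $|y|=\e$. A short algebraic cancellation in \eqref{ader} then yields
\begin{equation*}
\frac{\partial G_{B^c_\e}(x,y)}{\partial\nu_y}\Big|_{\partial B_\e} = \frac{\e^2-|x|^2}{N\omega_N\,\e\,|x-y|^N}.
\end{equation*}
Next I would introduce the Kelvin point $x^{\ast}:=\e^2 x/|x|^2$, which lies in $B_\e$ since $|x|>\e$. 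The standard inversion identity $|x-y|=(|x|/\e)\,|x^{\ast}-y|$ for $|y|=\e$, together with $\e^2-|x^{\ast}|^2=\e^2(|x|^2-\e^2)/|x|^2$, shows that the displayed kernel equals $-(\e^{N-2}/|x|^{N-2})$ times the Poisson kernel of $B_\e$ at the interior point $x^{\ast}$. The identity $(\star)$ is then an immediate consequence of the Poisson representation \eqref{aar}.

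With $(\star)$ in hand the three claims are routine. Identity \eqref{ap1} is $(\star)$ with $f\equiv 1$. Identity \eqref{ap2} is $(\star)$ applied to the harmonic function $f(y)=y_j$, whose value at $x^{\ast}$ is $\e^2 x_j/|x|^2$. For \eqref{ap10} the function $y_iy_j$ is not harmonic, so I would split it on $\partial B_\e$ as
\begin{equation*}
y_iy_j = \Bigl(y_iy_j-\tfrac{\delta^i_j}{N}|y|^2\Bigr)+\tfrac{\delta^i_j}{N}\e^2,
\end{equation*}
where the first summand is harmonic on $\R^N$ (its Laplacian is $2\delta^i_j-\delta^i_j\cdot 2N/N=0$) and the second is constant. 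Applying $(\star)$ to the harmonic piece (using $|x^{\ast}|^2=\e^4/|x|^2$) and \eqref{ap1} to the constant piece, a brief arithmetic simplification produces exactly the right-hand side of \eqref{ap10}.

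The only delicate point is the boundary simplification that turns \eqref{ader} into the clean Poisson-kernel form; once this cancellation is checked, Kelvin inversion and the Poisson formula deliver $(\star)$, and the rest reduces to elementary substitution.
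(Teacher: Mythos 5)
Your proposal is correct and, at bottom, uses the same mechanism as the paper: Kelvin inversion $x\mapsto x^\ast=\e^2 x/|x|^2$ to recast the exterior normal derivative on $\partial B_\e$ as a multiple of the interior Poisson kernel, then evaluation via the Poisson representation. The paper does this integral-by-integral, substituting $x=\e^2t/|t|^2$ directly and invoking the interior identities \eqref{ap3} and \eqref{ap14}; you package the same idea into the single master identity $(\star)$ before specializing $f$, and your harmonic/constant split of $y_iy_j$ is the same decomposition the paper implements via the auxiliary harmonic function $a(x)=x_ix_j+\tfrac{\delta^i_j}{N}(\e^2-|x|^2)$, so the two arguments are essentially the same, with yours stated a little more uniformly.
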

\begin{proof}
In order to prove \eqref{ap1} and  \eqref{ap2} we will use the well known facts that, for $x\in B_\e$,
\begin{equation}\label{ap3}
\int_{\partial B_\e}\frac{\partial G_{B_\e}(x,y)}{\partial\nu_y}d\sigma_y=-1\quad\hbox{and }
\int_{\partial B_\e}y_j\frac{\partial G_{B_\e}(x,y)}{\partial\nu_y}d\sigma_y=-x_j,
\end{equation}
(these can be deduced by \eqref{aar}).
Let us prove \eqref{ap1} for $N\geq 3$. We have that, for for $x\in B_\e^c$,
\begin{equation*}
\begin{split}
\int_{\partial B_\e}& \frac{\partial G_{B^c_\e}(x,y)}{\partial\nu_y}d\sigma_y =\frac{\e^2-|x|^2}{N\omega_N\e}\int_{\partial B_\e}\frac1{|x-y|^N}d\sigma_y=(\hbox{setting $x=\e^2\frac t{|t|^2}$, and then $|t|\le\e$})
\\
&\e\frac{|t|^2-\e^2}{N\omega_N|t|^2}\int_{\partial B_\e}\frac{|t|^{2N}}{|\e^2t-|t|^2y|^N}d\sigma_y=
\frac{(|t|^2-\e^2)|t|^{N-2}}{N\omega_N\e^{N-1}}\int_{\partial B_\e}\frac1{|t-y|^N}d\sigma_y\\& \hbox{ (using \eqref{ap3} for }t\in B_\e)= \frac{|t|^{N-2}}{\e^{N-2}}\int_{\partial B_\e}\frac{\partial G_{B_\e}(t,y)}{\partial\nu_y}=-\frac{\e^{N-2}}{|x|^{N-2}}.
 \end{split}
\end{equation*}
In the same way we get \eqref{ap2},
\begin{equation*}
\begin{split}
\int_{\partial B_\e}& y_j\frac{\partial G_{B^c_\e}(x,y)}{\partial\nu_y}d\sigma_y =\frac{\e^2-|x|^2}{N\omega_N\e}\int_{\partial B_\e}\frac{y_j}{|x-y|^N}d\sigma_y=(\hbox{setting $x=\e^2\frac t{|t|^2}$, and then $|t|\le\e$})\\
 &=\e\frac{|t|^2-\e^2}{N\omega_N|t|^2}\int_{\partial B_\e}\frac{|t|^{2N}y_j}{|\e^2t-|t|^2y|^N}d\sigma_y=
\frac{(|t|^2-\e^2)|t|^{N-2}}{N\omega_N\e^{N-1}}\int_{\partial B_\e}\frac{y_j}{|t-y|^N}d\sigma_y\\
&\hbox{ (using \eqref{ap3} for }t\in B_\e)=\frac{|t|^{N-2}}{\e^{N-2}}t_j\int_{\partial B_\e}\frac{\partial G_{B_\e}(x,y)}{\partial\nu_y}=-\frac{x_j}{|x|^N}\e^N.
 \end{split}
\end{equation*}
Let us prove \eqref{ap10}. We have that the function
$$a(x)=x_ix_j+\frac{\delta^i_j}N(\e^2-|x|^2)$$
is harmonic and $a\big|_{\partial B_\e}=x_ix_j$. So
\begin{equation}\label{ap14}
\int_{\partial B_\e}y_jy_j\frac{\partial G_{B_\e}(x,y)}{\partial\nu_y}=-\left(x_ix_j+\frac{\delta^i_j}N(\e^2-|x|^2)\right)
\end{equation}
and arguing as before we that
\begin{equation*}
\begin{split}
\int_{\partial B_\e}& y_iy_j\frac{\partial G_{B^c_\e}(x,y)}{\partial\nu_y}d\sigma_y =\frac{\e^2-|x|^2}{N\omega_N\e}\int_{\partial B_\e}\frac{y_iy_j}{|x-y|^N}d\sigma_y=(\hbox{setting $x=\e^2\frac t{|t|^2}$, and then $|t|\le\e$})
\\
&=\e\frac{|t|^2-\e^2}{N\omega_N|t|^2}\int_{\partial B_\e}\frac{|t|^{2N}y_iy_j}{|\e^2t-|t|^2y|^N}d\sigma_y=
\frac{(|t|^2-\e^2)|t|^{N-2}}{N\omega_N\e^{N-1}}\int_{\partial B_\e}\frac{y_iy_j}{|t-y|^N}d\sigma_y\\
&\hbox{ (using \eqref{ap14} for }t\in B_\e)=-\frac{|t|^{N-2}}{\e^{N-2}}\left(t_it_j+\frac{\delta^i_j}N(\e^2-|t|^2)\right)\\
&=
-\frac{\e^{N-2}}{|x|^{N-2}}\left[\e^4\frac{x_ix_j}{|x|^4}+\frac{\delta^i_j}N
\e^2\left(1-\frac{\e^2}{|x|^2}\right)\right]=-\frac{\e^N}{|x|^N}
\left[\e^2\frac{x_ix_j}{|x|^2}+\frac{\delta^i_j}N\big(|x|^2-\e^2\big)\right],
 \end{split}
\end{equation*}
which proves the claim.
\end{proof}
Next lemma concerns some identities on the Robin function.
\begin{lem}\label{Lap}
We have that, for any $x\in B^c_\e$ and $N\ge2$ the following identities hold:
\begin{equation}\label{lap1}
\int_{\partial B_\e}\frac y\e\left(\frac{\partial G_{B^c_\e}(x,y)}{\partial\nu_y}\right)^2 d\sigma_y=-\nabla\mathcal{R}_{B_\e^c}(x)= \frac 2{N\omega_N}\e^{N-2}\frac x{ (|x|^2-\e^2)^{N-1}}
\end{equation}
and
\begin{equation}\label{lap2}
-\int_{\partial B_\e}\frac{\partial G_{B_\e^c}(x,y)}{\partial\nu_y}S(x,y)d\sigma_y=
\begin{cases}
C_N\frac{\e^{N-2}}{\left(|x|^2-\e^2\right)^{N-2}}=\mathcal{R}_{B_\e^c}(x)&\mbox{ if }N\ge3,\\[3mm]
\frac1{2\pi}\left(-\ln\frac{|x|^2}{|x|^2-\e^2}+\ln|x|\right)=\mathcal{R}_{B_\e^c}(x)+
\frac1{2\pi}\ln\frac{|x|}\e&\mbox{ if }N=2.
\end{cases}
\end{equation}

\end{lem}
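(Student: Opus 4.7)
\medskip

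My plan is to treat the two identities separately, exploiting in each case the very explicit structure available on the ball.

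For the first identity, I would rely on the general derivative formula for the Robin function, which is the analogue of \eqref{I7} for the exterior domain $B_\e^c$. Strictly speaking \eqref{I7} was stated for bounded domains, but the same derivation (reviewed in \cite{bp}) applies to $B_\e^c$ after truncating with $B_R\setminus B_\e$ and letting $R\to\infty$: the contribution from $\partial B_R$ vanishes because $G_{B_\e^c}(x,y)$ and $\nabla_y G_{B_\e^c}(x,y)$ decay fast enough. Since the outer normal to $B_\e^c$ at a point $y\in\partial B_\e$ is $\nu_y=-y/\e$, the formula reads
\[
\nabla\mathcal{R}_{B_\e^c}(x)=\int_{\partial B_\e}\nu_y\left(\frac{\partial G_{B_\e^c}(x,y)}{\partial\nu_y}\right)^2d\sigma_y=-\int_{\partial B_\e}\frac{y}{\e}\left(\frac{\partial G_{B_\e^c}(x,y)}{\partial\nu_y}\right)^2d\sigma_y,
\]
which is exactly the first claimed equality (up to the sign). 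The second equality is then a routine computation: differentiate \eqref{robin-esterno-palla} in $x$, observing that $(N-2)C_N=\frac1{N\omega_N}$ when $N\ge3$ and that $N\omega_N=2\pi$ when $N=2$, so both cases unify into $-\nabla\mathcal{R}_{B_\e^c}(x)=\frac2{N\omega_N}\e^{N-2}\frac x{(|x|^2-\e^2)^{N-1}}$.

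For the second identity, the strategy is to recognize the normal derivative of the Green function on $\partial B_\e$ as a \emph{Poisson-type kernel for the exterior domain}. A direct computation from \eqref{ader} shows that, at $y\in\partial B_\e$ (using $|y|^2=\e^2$ and $|x|^2|y|^2-2\e^2 x\cdot y+\e^4=\e^2|x-y|^2$),
\[
\frac{\partial G_{B_\e^c}(x,y)}{\partial\nu_y}=-\frac y\e\cdot\nabla_y G_{B_\e^c}(x,y)=-\frac{|x|^2-\e^2}{N\omega_N\,\e\,|x-y|^N}=:-K(x,y).
\]
Under the Kelvin inversion $\tilde x=\e^2 x/|x|^2\in B_\e$, the identities $|\tilde x-y|=\frac\e{|x|}|x-y|$ and $\e^2-|\tilde x|^2=\frac{\e^2}{|x|^2}(|x|^2-\e^2)$ imply
\[
K(x,y)=\frac{\e^{N-2}}{|x|^{N-2}}\,P_{B_\e}(\tilde x,y),\qquad P_{B_\e}(\tilde x,y):=\frac{\e^2-|\tilde x|^2}{N\omega_N\,\e\,|\tilde x-y|^N},
\]
so $K(x,\cdot)$ is, up to the explicit factor $\e^{N-2}/|x|^{N-2}$, the standard Poisson kernel of $B_\e$ evaluated at the Kelvin image of $x$.

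The key observation is now that for every fixed $x\in B_\e^c$ the function $y\mapsto S(x,y)$ is \emph{harmonic in $B_\e$}, because $x\notin\overline{B_\e}$. Applying the Poisson integral formula to $\tilde x\in B_\e$ therefore yields
\[
-\int_{\partial B_\e}\frac{\partial G_{B_\e^c}(x,y)}{\partial\nu_y}S(x,y)\,d\sigma_y=\int_{\partial B_\e}K(x,y)S(x,y)\,d\sigma_y=\frac{\e^{N-2}}{|x|^{N-2}}\,S(x,\tilde x).
\]
Finally I would plug in the explicit distance $|x-\tilde x|=(|x|^2-\e^2)/|x|$: when $N\ge3$ this gives $S(x,\tilde x)=C_N|x|^{N-2}/(|x|^2-\e^2)^{N-2}$, and the prefactor $\e^{N-2}/|x|^{N-2}$ produces exactly $\mathcal{R}_{B_\e^c}(x)$; when $N=2$ it gives $S(x,\tilde x)=-\frac1{2\pi}\ln\frac{|x|^2-\e^2}{|x|}=\frac1{2\pi}\ln\frac{|x|}{|x|^2-\e^2}$, which, after adding and subtracting $\frac1{2\pi}\ln\e$, matches $\mathcal{R}_{B_\e^c}(x)+\frac1{2\pi}\ln\frac{|x|}{\e}$. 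The only subtlety worth double-checking is the justification of the Poisson integral formula at the Kelvin image—straightforward here since $S(x,\cdot)$ is bounded and smooth on $\overline{B_\e}$—and the bookkeeping of signs coming from the choice of outer normal to $B_\e^c$.
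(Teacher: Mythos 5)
Your proposal is correct, and it takes a somewhat different route from the paper for both identities. For \eqref{lap1}, the paper avoids the exterior domain entirely: it starts from the known interior formula $\nabla\mathcal{R}_{B_\e}(t)=\int_{\partial B_\e}\nu(y)\bigl(\partial_{\nu_y}G_{B_\e}(t,y)\bigr)^2d\sigma_y$ for $t\in B_\e$, expands it to obtain an explicit integral identity (\eqref{Res}), and then transfers the result to $B_\e^c$ by substituting $x=\e^2 t/|t|^2$ in the exterior integral and using that identity. You instead apply the representation formula directly to $B_\e^c$, justified by truncating to $B_R\setminus B_\e$ and sending $R\to\infty$. Your route is more direct, but it pushes the burden onto the limiting argument: you should say explicitly that what matters is the decay of $\nabla_y G$, since for $N=2$ the Green function $G_{B_\e^c}(x,\cdot)$ itself does \emph{not} tend to zero at infinity (it tends to the constant $\frac1{2\pi}\ln\frac{|x|}{\e}$); the tangential derivatives still decay like $1/|y|$, so the $\partial B_R$ contribution is $O(1/R)$ and the argument goes through, but as written ``$G_{B_\e^c}$ and $\nabla_y G_{B_\e^c}$ decay fast enough'' is slightly misleading in dimension two. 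For \eqref{lap2} both proofs hinge on the Kelvin inversion $\tilde x=\e^2 x/|x|^2$, but they package it differently: the paper changes variables inside the integral so as to recognize it as $\int_{\partial B_\e}\partial_{\nu_y}G_{B_\e}(t,y)\,S(t,y)\,d\sigma_y=-\mathcal R_{B_\e}(t)$ and then plugs in the known expression for $\mathcal R_{B_\e}$; you instead identify the kernel $-\partial_{\nu_y}G_{B_\e^c}(x,y)$ on $\partial B_\e$ as $\frac{\e^{N-2}}{|x|^{N-2}}P_{B_\e}(\tilde x,\cdot)$ and apply the Poisson integral formula to the function $y\mapsto S(x,y)$, which is harmonic on $\overline{B_\e}$. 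Your version is slightly more elementary (it does not need the interior identity $\int\partial_\nu G_{B_\e}\cdot S=-\mathcal R_{B_\e}$) and arrives at the closed form $\frac{\e^{N-2}}{|x|^{N-2}}S(x,\tilde x)$ in one step. Finally, a small remark: the middle expression in the published statement of \eqref{lap2} for $N=2$ simplifies to $\frac1{2\pi}\ln\frac{|x|^2-\e^2}{|x|}$, which is the negative of the right-hand side $\mathcal{R}_{B_\e^c}(x)+\frac1{2\pi}\ln\frac{|x|}{\e}=\frac1{2\pi}\ln\frac{|x|}{|x|^2-\e^2}$; both your computation and the paper's own proof produce the latter, so the middle term in the statement carries a sign typo and your answer is the correct one.
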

\begin{proof}
Let us prove \eqref{lap1}. For $t\in B_\e$, using the expression of the Green and the Robin function in $B_\e$ and recalling \cite{bp},  p.  170, we have
\begin{equation*}
\begin{split}
\int_{\partial B_\e}\nu(y)\left(\frac{\partial G_{B_\e}(t,y)}{\partial\nu_y}\right)^2 d\sigma_y=\nabla\mathcal{R}_{B_\e}(t)=\frac 2{N\omega_N}\e^{N-2}\frac t{ (\e^2-|t|^2)^{N-1}}
 \end{split}
\end{equation*}
and we deduce the following formula,
\begin{equation}\label{Res}
\frac 1{(N\omega_N)^2}\int_{\partial B_\e}\frac y\e\frac1{|t-y|^{2N}}d\sigma_y=\frac 2{N\omega_N}\e^{N-2}\frac t{ (\e^2-|t|^2)^{N-1}}\quad\hbox{for any }t\in B_\e.
\end{equation}
Now, let $x\in B^c_\e$, then using \eqref{ader}
\begin{equation*}
\begin{split}
\int_{\partial B_\e}&\frac y\e\left(\frac{\partial G_{B^c_\e}(x,y)}{\partial\nu_y}\right)^2d\sigma_y=
\frac 1{(N\omega_N)^2}
\frac{\left(\e^2-|x|^2\right)^2}{\e^2}\int_{\partial B_\e}\frac y\e\frac1{|x-y|^{2N}}d\sigma_y\\
&(\hbox{setting $x=\e^2\frac t{|t|^2}$ and so $|t|\le\e$})=
\frac 1{(N\omega_N)^2}
\frac{\e^2}{|t|^4}(|t|^2-\e^2)^2\int_{|y|=\e}\frac y\e\frac1{\left|\frac{\e^2t-|t|^2y}{|t|^2}\right|^{2N}}d\sigma_y\\=
&\frac 1{(N\omega_N)^2}(|t|^2-\e^2)^2\frac{|t|^{2N-4}}{\e^{2N-2}}\int_{|y|=\e}\frac y\e\frac1{|t-y|^{2N}}d\sigma_y\\
&(\hbox{using  \eqref{Res} for $|t|\le\e$})=\frac 2{N\omega_N}
\frac{|t|^{2N-4}}{\e^{N-2}}\frac t{\left(\e^2-|t|^2\right)^{N-1}}\\=
&\frac 2{N\omega_N}
\e^{N-2}\frac x{ (|x|^2-\e^2)^{N-1}}=-\nabla\mathcal{R}_{B_\e^c}(x),
 \end{split}
\end{equation*}
which gives the claim.

\vskip 0.1cm
Next let us prove \eqref{lap2}. If $N\ge3$ we have
 \begin{equation*}
  \begin{split}
 \int_{\partial B_\e}& \frac{\partial G_{B_\e^c}(x,y)}{\partial\nu_y}S(x,y)d\sigma_y=
\frac{\e^2-|x|^2}{N\omega_N\e}C_N\int_{\partial B_\e}\frac1{|x-y|^{2N-2}}d\sigma_y\\
&(\hbox{setting $x=\e^2\frac t{|t|^2}$})=\frac{|t|^2-\e^2}{N\omega_N\e}C_N\frac{|t|^{2N-4}}{\e^{2N-4}}\int_{\partial B_\e}\frac1{|t-y|^{2N-2}}d\sigma_y\\=&\frac{|t|^{2N-4}}{\e^{2N-4}}\int_{\partial B_\e}\frac{\partial G_{B_\e}(t,y)}{\partial\nu_y}S(t,y)d\sigma_y= -C_N\frac{|t|^{2N-4}}{\e^{2N-4}}\frac{\e^{N-2}}{\left(\e^2-|t|^2\right)^{N-2}}=
-C_N\frac{\e^{N-2}}{\left(|x|^2-\e^2\right)^{N-2}}.
\end{split}
 \end{equation*}
If $N=2$ we have, arguing as above,
 \begin{equation*}
  \begin{split}
\int_{\partial B_\e}&\frac{\partial G_{B_\e^c}(x,y)}{\partial\nu_y}S(x,y)d\sigma_y=
-\frac{\e^2-|x|^2}{4\pi^2\e}\int_{\partial B_\e}\frac{\ln|x-y|}{|x-y|^2}d\sigma_y\\=
&-\e^2\frac{|t|^2-\e^2}{4\pi^2\e|t|^2}\int_{\partial B_\e}\frac{\ln|\e^2\frac t{|t|^2}-y|}{|\e^2\frac t{|t|^2}-y|^2}d\sigma_y=-\frac{|t|^2-\e^2}{4\pi^2\e}\int_{\partial B_\e}\frac{\ln|t-y|+\ln\e-\ln|t|}{|t-y|^2}d\sigma_y\\=
&\int_{\partial B_\e}\frac{\partial G_{B_\e}(t,y)}{\partial\nu_y}S(t,y)d\sigma_y-\frac{|t|^2-\e^2}{4\pi^2\e}\ln\frac\e{|t|}\int_{\partial B_\e}\frac1{|t-y|^2}d\sigma_y\\=
&-\mathcal{R}_{B_\e}(t)+\frac1{2\pi}\ln\frac\e{|t|}=-\frac 1{2\pi}\ln \frac \e{\e^2-|t|^2}+\frac1{2\pi}\ln\frac\e{|t|}\\
=&-\mathcal{R}_{B_\e^c}(x) -\frac1{2\pi}\ln\frac{|x|}\e,
\end{split}
 \end{equation*}
which ends the proof.
\end{proof}
The final lemma computes some useful integrals.
\begin{lem}
We have that, for any $x\in B^c_\e$ and $N\ge2$ the following identities hold,
 \begin{equation}\label{ap5}
\int_{\partial B_\e}\frac{C_N}{|x-y|^{N-2}}d\sigma_y=\frac1{N-2}\frac{\e^{N-1}}{|x|^{N-2}},
\end{equation}
\begin{equation}\label{ap6}
\int_{\partial B_\e}\ln|x-y|d\sigma_y=2\pi\e\ln|x|,
\end{equation}
and
\begin{equation}\label{ap7}
\int_{\partial B_\e}\big|\ln|x-y|\big|d\sigma_y=O\left(\e\big|\ln|x|\big|\right).
\end{equation}
\end{lem}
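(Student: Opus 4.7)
The plan is to exploit harmonicity of the integrands in \eqref{ap5} and \eqref{ap6} and invoke the mean value property on the sphere $\partial B_\e$ centered at the origin. The absolute value in \eqref{ap7} destroys harmonicity, so for that estimate I will split on the size of $|x|$ relative to $\e$ and bound $\bigl|\ln|x-y|\bigr|$ separately in each range.

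For \eqref{ap5}: since $x\in B_\e^c$, the function $y\mapsto C_N|x-y|^{2-N}$ has its only singularity at $y=x\notin\overline{B_\e}$, and is therefore harmonic on a neighborhood of $\overline{B_\e}$. The classical mean value property on $\partial B_\e$ yields
$$\frac{1}{|\partial B_\e|}\int_{\partial B_\e}\frac{C_N}{|x-y|^{N-2}}\,d\sigma_y=\frac{C_N}{|x|^{N-2}},$$
and substituting $|\partial B_\e|=N\omega_N\e^{N-1}$ together with $C_N=\frac{1}{N(N-2)\omega_N}$ produces the stated identity at once. The argument for \eqref{ap6} is identical: for $N=2$, the function $y\mapsto\ln|x-y|$ is harmonic in a neighborhood of $\overline{B_\e}$, and the mean value property gives $\int_{\partial B_\e}\ln|x-y|\,d\sigma_y=2\pi\e\ln|x|$.

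For \eqref{ap7} I plan to split into two cases. If $|x|\ge 2\e$, then every $y\in\partial B_\e$ satisfies $|x|/2\le|x-y|\le 2|x|$, whence $\bigl|\ln|x-y|\bigr|\le|\ln|x||+\ln 2$, and integrating over $\partial B_\e$ immediately gives a bound of order $\e|\ln|x||$. In the opposite range $\e\le|x|<2\e$, I parametrize $\partial B_\e$ by $y=\e(\cos\theta,\sin\theta)$ and use rotational invariance to reduce to $x=(r\e,0)$ with $r\in[1,2]$. Then $|x-y|=\e|r-e^{i\theta}|$, so
$$\int_{\partial B_\e}\bigl|\ln|x-y|\bigr|\,d\sigma_y\le 2\pi\e|\ln\e|+\e\sup_{r\in[1,2]}\int_0^{2\pi}\bigl|\ln|r-e^{i\theta}|\bigr|\,d\theta,$$
and since $|x|<2\e$ forces $|\ln|x||$ to be of the same order as $|\ln\e|$, this will yield the desired $O(\e|\ln|x||)$ once the residual integral is shown to be finite.

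The only mild obstacle is the finiteness of $\sup_{r\in[1,2]}\int_0^{2\pi}\bigl|\ln|r-e^{i\theta}|\bigr|\,d\theta$. The worst case is $r=1$, which reduces to the classical integral $\int_0^{2\pi}|\ln(2|\sin(\theta/2)|)|\,d\theta$; this is finite because $\ln\sin$ is integrable on $[0,\pi]$. Continuity of the integral in $r$ then provides the uniform bound on $[1,2]$, completing the proof of \eqref{ap7}.
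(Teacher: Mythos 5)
Your proof is correct, but it takes a genuinely different route from the paper's. For \eqref{ap5} and \eqref{ap6} you observe that $y\mapsto C_N|x-y|^{2-N}$ (resp.\ $y\mapsto\ln|x-y|$) is harmonic on a neighborhood of $\overline{B_\e}$ because $x\in B_\e^c$, and you read off the integrals from the mean value property on the sphere centered at $0$; the paper instead rewrites the integrand via $|x-y|^2=|x|^2+\e^2-2x\cdot y$ and reduces to the Green-function identities \eqref{ap1}--\eqref{ap2} (for \eqref{ap5}), and uses a Kelvin inversion $x=\e^2 t/|t|^2$ together with the value of the integral for $x$ \emph{inside} the ball (for \eqref{ap6}). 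Your argument is shorter and more elementary; the paper's stays within the toolbox of Green-function identities it has already built and, as a byproduct, records the value of $\int_{\partial B_\e}\ln|x-y|\,d\sigma_y$ for $x\in B_\e$ as well. For \eqref{ap7} the two case splits differ: the paper splits at $|x|=1/2$ and, when $\e<|x|\le 1/2$, notes that $|x-y|<1$ so the absolute value can be dropped and \eqref{ap6} applied exactly; you split at $|x|=2\e$ and must control $\sup_{r\in[1,2]}\int_0^{2\pi}\bigl|\ln|r-e^{i\theta}|\bigr|\,d\theta$, which you correctly reduce to the integrability of $\ln\sin$ plus continuity in $r$. Both versions produce, in the regime $|x|$ bounded away from $\e$, a bound of the form $\e\bigl(|\ln|x||+O(1)\bigr)$ rather than literally $O(\e|\ln|x||)$ when $|\ln|x||$ is near $0$; the paper's own proof has the same feature (its bound is $(2\pi|\ln|x||+o(1))\e$), and this is harmless where \eqref{ap7} is used, so it is not a defect of your argument relative to the paper's.
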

\begin{proof}
For $x\in B_\e^c$, using \eqref{ap1} and \eqref{ap2},  we have
\begin{equation*}
\begin{split}
\int_{\partial B_\e}&\frac{C_N}{|x-y|^{N-2}}d\sigma_y=C_N\int_{\partial B_\e}\frac{|x|^2+\e^2-2x\cdot y}{|x-y|^N}d\sigma_y
\\&=\frac \e{N-2}\frac{|x|^2+\e^2}{\e^2-|x|^2}\int _{\partial B_\e}\frac{\partial G_{B_\e^c}(x,y)}{\partial \nu_y}d\sigma_y-\frac{2\e}{N-2}\frac{x}{\e^2-|x|^2}\cdot \int _{\partial B_\e}y\frac{\partial G_{B_\e^c}(x,y)}{\partial \nu_y}d\sigma_y\\
&=\frac {\e^{N-1}}{N-2}\frac 1{|x|^{N-2}}.
\end{split}
\end{equation*}
Let us prove \eqref{ap6}. Denoting by $F(x)=\displaystyle\int_{\partial B_\e}\ln|x-y|d\sigma_y$, by \eqref{aar} we get, for any $x\in B_\e$,
\begin{equation*}
\begin{split}
\frac{\partial F(x)}{\partial x_i}=& x_i\int_{\partial B_\e}\frac1{|x-y|^2}d\sigma_y-\int_{\partial B_\e}\frac{y_i}{|x-y|^2}d\sigma_y=0,
\end{split}\end{equation*}
and then $F(x)=F(0)=\displaystyle\int_{\partial B_\e}\ln|y|d\sigma_y=2\pi\e\ln\e.$ Next, if $x\in B_\e^c$,
\begin{equation*}
\begin{split}
 \int_{\partial B_\e}\ln|x-y|d\sigma_y& \left(\hbox{setting $x=\e^2\frac t{|t|^2}$}\right)=\int_{\partial B_\e}\big(\ln|x-y|+\ln\frac\e{|t|}\big)d\sigma_y\\=
&2\pi\e\ln\e+2\pi\e\ln\frac{|x|}\e=2\pi\e\ln|x|,
\end{split}
\end{equation*}
which proves \eqref{ap6}.\\
Finally to estimate $\int_{\partial B_\e}\big|\ln|x-y|\big|d\sigma_y$ we consider the alternative either $\e<|x|\le\frac12$ or $|x|\ge\frac12$.\\
If  $|x|\ge\frac12$ then
\begin{equation*}
\begin{split}
\int_{\partial B_\e}\big|\ln|x-y|\big|d\sigma_y&\leq\int_{\partial B_\e}\left[\big|\ln|x|\big|+\frac12\Big|\underbrace{\ln\left(1-2\frac{x\cdot y}{|x|^2}+\frac{\e^2}{|x|^2}\right)}_{=o(1)}\Big|\right]d\sigma_y\\
&=\big(2\pi\big|\ln|x|\big|+o(1)\big)\e.
\end{split}
\end{equation*}
On the other hand, if $\e<|x|\le\frac12$, for $y\in\partial B_\e$ we have $|x-y|<1$. So
\begin{equation*}
\int_{\partial B_\e}\big|\ln|x-y|\big|d\sigma_y=-\int_{\partial B_\e}\ln|x-y|d\sigma_y=-2\pi\e\ln|x|,
\end{equation*}
which ends the proof.

\end{proof}
\noindent\textbf{Acknowledgments} ~Part of this work was done while Peng Luo  was visiting the Mathematics Department
of the University of Rome ``La Sapienza" whose members he would like to thank for their warm hospitality.  Massimo Grossi was supported
by INDAM-GNAMPA project. Francesca Gladiali was supported by FdS-Uniss 2017 and by INDAM-GNAMPA project. Peng Luo  was supported by NSFC grants (No.12171183,11831009).
Shusen Yan was supported by  NSFC grants (No.12171184).
\bibliographystyle{abbrv}
\bibliography{GladialiGrossiLuoYan}
\end{document}